\newtheorem{thm}{Theorem}[section]
\newtheorem{lem}[thm]{Lemma}
\newtheorem{cor}[thm]{Corollary}
\newtheorem{prop}[thm]{Proposition}
\theoremstyle{definition}
\newtheorem{example}[thm]{Example}
\newtheorem{conjecture}[thm]{Conjecture}
\theoremstyle{remark}
\newtheorem{rem}[thm]{Remark}
\numberwithin{equation}{section}
\begin{document}

\newcommand{\thmref}[1]{Theorem~\ref{#1}}
\newcommand{\secref}[1]{Section~\ref{#1}}
\newcommand{\lemref}[1]{Lemma~\ref{#1}}
\newcommand{\propref}[1]{Proposition~\ref{#1}}
\newcommand{\corref}[1]{Corollary~\ref{#1}}
\newcommand{\remref}[1]{Remark~\ref{#1}}
\newcommand{\eqnref}[1]{(\ref{#1})}
\newcommand{\exref}[1]{Example~\ref{#1}}

\newcommand{\nc}{\newcommand}
\nc{\Z}{{\mathbb Z}}
\nc{\C}{{\mathbb C}}
\nc{\N}{{\mathbb N}}
\nc{\F}{{\mf F}}
\nc{\Q}{\ol{Q}}
\nc{\la}{\lambda}
\nc{\ep}{\epsilon}
\nc{\h}{\mathfrak h}
\nc{\st}{\texttt{st}}
\nc{\n}{\mf n}
\nc{\A}{{\mf a}}
\nc{\G}{{\mathfrak g}}
\nc{\SG}{\overline{\mathfrak g}}
\nc{\DG}{\widetilde{\mathfrak g}}
\nc{\D}{\mc D} \nc{\Li}{{\mc L}} \nc{\La}{\Lambda} \nc{\is}{{\mathbf
i}} \nc{\V}{\mf V} \nc{\bi}{\bibitem} \nc{\NS}{\mf N}
\nc{\dt}{\mathord{\hbox{${\frac{d}{d t}}$}}} \nc{\E}{\mc E}
\nc{\ba}{\tilde{\pa}} \nc{\half}{\frac{1}{2}} \nc{\mc}{\mathcal}
\nc{\mf}{\mathfrak} \nc{\hf}{\frac{1}{2}}
\nc{\hgl}{\widehat{\mathfrak{gl}}} \nc{\gl}{{\mathfrak{gl}}}
\nc{\hz}{\hf+\Z}
\nc{\dinfty}{{\infty\vert\infty}} \nc{\SLa}{\overline{\Lambda}}
\nc{\SF}{\overline{\mathfrak F}} \nc{\SP}{\overline{\mathcal P}}
\nc{\U}{\mathfrak u} \nc{\SU}{\overline{\mathfrak u}}
\nc{\ov}{\overline}
\nc{\wt}{\widetilde}
\nc{\wh}{\widehat}
\nc{\sL}{\ov{\mf{l}}}
\nc{\sP}{\ov{\mf{p}}}
\nc{\osp}{\mf{osp}}
\nc{\spo}{\mf{spo}}
\nc{\even}{{\bar 0}}
\nc{\odd}{{\bar 1}}
\nc{\I}{\mathbb{I}}
\nc{\X}{\mathbb{X}}
\nc{\hh}{\widehat{\mf{h}}}
\nc{\Icirc}{I_{\text{\,\begin{picture}(2,2)\setlength{\unitlength}{0.07in} \put(0.3,0.45){\circle{1}}\end{picture}}}\,}
\nc{\Ibullet}{I_{\text{\,\begin{picture}(2,2)\setlength{\unitlength}{0.07in} \put(0.3,0.45){\circle*{1}}\end{picture}}}\,}

\newcommand{\blue}[1]{{\color{blue}#1}}
\newcommand{\red}[1]{{\color{red}#1}}
\newcommand{\green}[1]{{\color{green}#1}}
\newcommand{\white}[1]{{\color{white}#1}}

 \advance\headheight by 2pt

\title[Kac-Wakimoto formula for ortho-symplectic Lie superalgebras]{Kac-Wakimoto character formula for ortho-symplectic Lie superalgebras}

\author[Cheng]{Shun-Jen Cheng$^\dagger$}
\thanks{$^\dagger$Partially supported by a MoST and an Academia Sinica Investigator grant}
\address{Institute of Mathematics, Academia Sinica, Taipei,
Taiwan 10617} \email{chengsj@math.sinica.edu.tw}

\author[Kwon]{Jae-Hoon Kwon$^{\dagger\dagger}$}
\thanks{$^{\dagger\dagger}$Partially supported by an NRF-grant 2011-0006735.}
\address{Department of Mathematical Sciences, Seoul National University, Seoul 151-747, Korea
 }
\email{jaehoonkw@snu.ac.kr}

\keywords{ortho-symplectic Lie superalgebras, Kac-Wakimoto formula}

\begin{abstract} We classify finite-dimensional tame modules over the ortho-symplectic Lie superalgebras. For these modules we show that their characters are given by the Kac-Wakimoto character formula, thus establishing the Kac-Wakimoto conjecture for the ortho-symplectic Lie superalgebras. We further relate the Kac-Wakimoto formula to the super Jacobi polynomials of Sergeev and Veselov, and show that these polynomials, up to a sign, give the super characters for these tame modules.
\end{abstract}



\maketitle

\section{Introduction}

The characters of the finite-dimensional irreducible modules over a finite-dimensional simple Lie algebra is given by the celebrated Weyl character formula. This character formula is known to generalize to the so-called Weyl-Kac character formula for integrable highest weight modules over Kac-Moody Lie algebras (see e.g., \cite{K3}). However, such a formula does not hold for finite-dimensional irreducible modules over finite-dimensional classical Lie superalgebras, in general. The only class of finite-dimensional irreducible modules for which such a character formula holds is the class of the so-called typical modules \cite{K2} (see Section \ref{subsec:general} for a definition).

For the finite-dimensional irreducible modules over the general linear Lie superalgebras of degree of atypicality $1$ (see Section \ref{subsec:general} for a definition), Bernstein and Leites \cite{BL} produced a character formula, that has resemblance with the Weyl character formula. For $\osp(2|2n)$, an analogous formula was then established by van der Jeugt in \cite{vdJ}. However, it was quickly realized that the Bernstein-Leites' formula does not generalize to arbitrary finite-dimensional irreducible modules of degrees of atypicality exceeding one.

There is another character formula of closed form, which resembles, and in a way generalizes, the Bernstein-Leites formula, often referred to as the {\em Kac-Wakimoto formula or conjecture} \cite{KW1,KW2}.
Just like the Bernstein-Leites formula, the Kac-Wakimoto formula does not hold for arbitrary finite-dimensional irreducible modules over basic Lie superalgebras.  Indeed, the conjecture of Kac and Wakimoto \cite[Conjecture 3.6]{KW2} (see also Conjecture \ref{conj:KW}) makes explicit a (conjectural) subset of finite-dimensional irreducible modules whose characters are given by the Kac-Wakimoto character formula when the degree of atypicality may exceed one.

The finite-dimensional irreducible characters over the general linear Lie superalgebras was first computed by Serganova in \cite{Ser}, where an algorithm was given for their Kazhdan-Lusztig polynomials. Brundan then related these polynomials to Lusztig's canonical bases \cite{Br}. Gruson and Serganova in \cite{GS} gave an algorithm to compute the finite-dimensional irreducible characters of the ortho-symplectic Lie superalgebras. Irreducible characters for modules over these basic Lie superalgebras in certain parabolic and then the general BGG categories are then obtained in \cite{CL, CLW, CLW2, BW, Br}. From these works, it is evident that even the finite-dimensional irreducible characters are controlled by Kazhdan-Lusztig type polynomials that are in general rather complicated to compute. Thus, closed formulas are not expected in general.

It is therefore surprising that there should be a large class of {\em tame} modules (see Section \ref{sec:tame:KW}), which includes the typical modules as the simplest subclass, over basic Lie superalgebras, and whose characters should be given by certain simple closed formulas, as predicted by the Kac-Wakimoto conjecture. For the general linear Lie superalgebra the Kac-Wakimoto conjecture has recently been verified by Chmutov, Hoyt, and Reif \cite{CHR}. In loc.~cit.~the tame modules were shown to be precisely the Kostant modules of Brundan and Stroppel \cite{BS}. The Kac-Wakimoto conjecture for the general linear Lie superalgebra is then derived in \cite{CHR} using a character formula for Kostant modules given in \cite{SZ1}.

The finite-dimensional representation theory of the exceptional Lie superalgebra $D(2|1,\alpha)$, $G(3)$, and $F(3|1)$ is simplified by the fact that the degree of atypicality of the irreducible modules is at most one. The finite-dimensional irreducible modules afford Bernstein-Leites type character formulas, see \cite[Theorem 7.1(1)]{SZ2}. Note that finite-dimensional irreducible characters of $D(2|1,\alpha)$ were also calculated in \cite[Section 3]{Ger}, while for $G(3)$ and $F(3|1)$, similar formulas were also obtained in \cite[Theorem 2.6]{Mar}. Since the rank of these Lie superalgebras are small, the conjugacy classes of Borel subalgebras are readily classified (besides they are certainly well-known), from which one obtains a classification of their tame modules in a fairly straightforward manner.  Once the tame modules have been identified, one then can derive the Kac-Wakimoto conjecture for these exceptional Lie superalgebras directly from the formulas in \cite[Theorem 7.1(1)]{SZ2}. We shall not give details in the present paper, as the approach is different and also easier. Besides, we think that experts may be aware of this fact.

Thus, the remaining open, and technically most demanding case of the Kac-Wakimoto conjecture for basic Lie superalgebras (Conjecture \ref{conj:KW}) is the case of the ortho-symplectic Lie superalgebras. The main purpose of this paper is to settle this conjecture in this case (Theorem \ref{thm:main result}). We shall use the remainder of this introduction to give an outline of the proof, and, at the same time,  also explain the organization of the paper.

In Section \ref{sec:prelim} we set up notations and collect some preliminary results on ortho-symplectic Lie superalgebras that are used in the sequel. The classification of Borel subalgebras for the ortho-symplectic Lie superalgebras are recalled, along with basic facts about Zuckermann's cohomological induction in the setting of Lie superalgebras following Germoni, Santos, and Serganova \cite{Ger, San, Ser}. Section \ref{sec:prelim} then concludes with a recollection of Gruson and Serganova's ``typical lemma'' (Lemma \ref{lem:GS}).

In Section \ref{sec:tame:KW} we classify the highest weights of tame modules over the ortho-symplectic Lie superalgebras. We simplify this task by first showing that in the classification it is enough to consider conjugacy classes of Borel subalgebras instead of all Borel subalgebras. As is usual, when studying the ortho-symplectic Lie superalgebra $\osp(\ell|2n)$, it is necessary to study the cases of $\ell$ even and odd separately. We classify the highest weights of tame modules for $\osp(2m+1|2n)$ and $\osp(2m|2n)$ in Theorems \ref{thm:classification B} and \ref{thm:classification D}, respectively. It turns out that such highest weights, if they are not typical, are precisely the highest weights that have only atypicality of ``type $A$'' (cf.~Lemma \ref{lem:T:set:Atype}). We also show in Proposition \ref{prop:Euler=KW} that the validity of the Kac-Wakimoto conjecture for the trivial modules implies that the characters of certain Euler characteristics are precisely given by the Kac-Wakimoto character formula.

In Section \ref{sec:hw:tame} highest weights of tame modules are studied in detail.  A main result in this section is Theorem \ref{thm:KW:bottom} which says that the highest weights of tame modules are special in the following sense: Let $\G=\osp(\ell|2n)$ with $\ell\ge 3$. Consider the standard Borel subalgebra (see Section \ref{sec:osp def}) and denote the set of positive roots by $\Phi^+$. Denote by $\preccurlyeq $ the usual partial ordering on weights induced by the positive root lattice with respect to $\Phi^+$. A particular feature of the finite-dimensional representation theory of $\osp(\ell|2n)$, for $\ell\ge 3$, is that there are highest weights that are minimal with respect to the partial order $\preccurlyeq$. We show that these ``bottoms'' of this partial ordering are essentially the highest weights of the tame modules. To be more precise, this is indeed the case when $\ell$ is odd. For $\ell$ even, it turns out that this is the case when the degree of atypicality exceeds one. However, in the case when the degree of atypicality equals one, this is not always true.  What remains true is that the ``bottoms'' are all highest weights of tame modules (Theorem \ref{thm:KW:bottom}). We prove the technical Proposition \ref{prop:aux01}, which allows us to make use of the ``typical'' lemma in Section \ref{sec:proof:KW}.

In Section \ref{sec:proof:KW} the Kac-Wakimoto conjecture for ortho-symplectic Lie superalgebras is proved in Theorem \ref{thm:main result}. We first establish the conjecture for the case of the trivial module in Proposition \ref{prop:KW:trivial}.  We then prove that the virtual modules in the Euler characteristics of Proposition \ref{prop:Euler=KW} above are indeed irreducible modules themselves, which then completes the proof. It is proved first with respect to a special class of Borel subalgebras $\mf b^{\texttt{odd}}$ (cf.~\cite{GS}), and then extended to the general case. We conclude the paper by showing that the Kac-Wakimoto super character formula are, up to a sign, essentially the Sergeev-Veselov's specialized super Jacobi polynomials in Corollary \ref{cor:super:Jacobi}. These polynomials were studied in \cite{SV1} in the context of deformed Calogero-Moser systems.

\vspace{.2cm} \noindent {\bf Acknowledgment.}
The second author thanks the Institute of Mathematics, Academia Sinica, Taipei, for hospitality and support. After our paper has been completed, the first author was informed by Victor Kac that he and Maria Gorelik just finished the paper ``Characters of (relatively) integrable modules over affine Lie superalgebras" (arXiv:1406.6860), where Conjecture \ref{conj:KW} for ortho-symplectic Lie superalgebra has been proved in most cases using different method. From their paper we learned that a factor of $2$ was missing in $j_\la$ in some cases in the first version of our manuscript. We thank Shifra Reif and Victor Kac for correspondence. Finally, we wish to thank the anonymous referee for useful comments and suggestions.

\vspace{.2cm} \noindent {\bf Notation.}
We assume that our base field is $\mathbb{C}$. All algebras, vector spaces, etc., are over $\C$.

\section{Preliminaries}\label{sec:prelim}

\subsection{Generalities on basic Lie superalgebras}\label{subsec:general}

Let $\G$ stand for a finite-dimensional basic Lie superalgebra. Suppose that $\mf b$ is a Borel subalgebra of $\G$ containing a Cartan subalgebra $\h$. We denote the set of simple and positive roots corresponding to $\mf b$ by $\Pi_{\mf b}$ and $\Phi^+_{\mf b}$, respectively. Furthermore, we let $\Phi^+_{{\mf b},\bar{0}}$ and $\Phi^+_{{\mf b},\bar{1}}$ stand for positive even and odd roots, respectively, and let
\begin{equation*}
\rho^{\mf b}_{\even}:=\frac{1}{2}\sum_{\alpha\in\Phi^+_{\mf b,\bar 0}}\alpha,\quad \rho^{\mf b}_{\odd}:=\frac{1}{2}\sum_{\beta\in\Phi^+_{\mf b,\bar 1}}\beta,\quad \rho^{\mf b}:=\rho^{\mf b}_{\even}-\rho^{\mf b}_{\odd}.
\end{equation*}
Then $\rho^{\mf b}$ is the corresponding Weyl vector.
Furthermore, for an indeterminate $e$, we set
\begin{align*}
D_{\mf b,\bar 0}:=\prod_{\alpha\in\Phi^+_{\mf b,\bar 0}}(e^{\alpha/2}-e^{-\alpha/2}),\quad
D_{\mf b, \bar 1}:=\prod_{\beta\in\Phi^+_{\mf b,\bar 1}}(e^{\beta/2}+e^{-\beta/2}),\quad
D_{\mf b}:=\frac{D_{\mf b, \bar 0}}{D_{\mf b, \bar 1}}.
\end{align*}
Note that for different $\mf b$'s the various expressions for $D_{\mf b, \bar 0}$ differ possibly by a sign, while the expression $D_{\mf b, \bar 1}$ is independent of $\mf b$. The subscript $\mf b$ will be dropped when there is no danger of confusion.

We denote by $W_{\G}$, or simply $W$ if there is no confusion, the Weyl group of $\G$, which is by definition the Weyl group of the reductive Lie algebra $\G_\even$. The length of an element $w\in W$ is denoted by $\ell(w)$.

For a Lie superalgebra $\G$ with Borel and Cartan subalgebras $\mf b$ and $\h$, respectively, we denote by $L_{\mf b}(\G,\gamma)$, the irreducible highest weight $\G$-module with $\mf b$-highest weight $\gamma\in\h^*$.
When $\mf b$ is clear from the context, we shall sometimes omit the subscript $\mf b$ and write $L(\G,\gamma)$.  When clear from the context, we may also omit $\G$, or both $\mf b$ and $\G$.

Recall that $\G$ has an invariant super-symmetric non-degenerate bilinear form $(\cdot,\cdot)$, which restricts to a non-generate bilinear form on $\h$, and which in turn induces a symmetric bilinear form on $\h^*$.  By abuse of notation we shall denote the form on $\h^*$ also by $(\cdot,\cdot)$. Recall that a root $\alpha$ is said to be {\em isotropic} if $(\alpha,\alpha)=0$.

Let $\mf b$ be a Borel subalgebra of $\G$.
Let $\gamma\in\h^*$, which we may regard as a $\mf b$-highest weight for an irreducible $\G$-module. Recall that the weight $\gamma$ is said to be {\em typical} if $(\gamma+\rho^{\mf b},\alpha)\not=0$, for all isotropic root $\alpha$. The corresponding irreducible highest weight  $\G$-module $L_{\mf b}(\G,\gamma)$ is then called {\em typical}. The module $L_{\mf b}(\G,\gamma)$ is said to be {\em atypical}, if it is not typical. In this case we may define the {\em degree of atypicality} of $L_{\mf b}(\G,\gamma)$ to be the maximal number of mutually orthogonal isotropic roots in $\Phi^+_{\mf b}$ that are orthogonal to $\gamma+\rho^{\mf b}$. We recall that the notions of typicality and degree of atypicality of an irreducible highest weight module are invariant under odd reflections  \cite[Section 1]{PS} (see also \cite[Section 1.4]{CW}), and hence they are independent of the Borel subalgebra $\mf b$ (cf.~Lemma \ref{lem:T:set:Atype}).

For a $\G$-module $V$ that is $\h$-semisimple, e.g., when $V$ is finite dimensional and irreducible, we shall denote by $V_\mu$ its $\mu$-weight space for $\mu\in\h^*$, that is, $$V_\mu=\{v\in V\,|\,hv=\mu(h)v \ \text{for } h\in\h \}.$$

\subsection{General linear Lie superalgebra $\gl(n|m)$}\label{sec:glnm}
For $m,n\ge 1$,
let $\C^{n|m}$ be the complex superspace of dimension $(n|m)$. Choose an ordered basis $\{v_1,\ldots,v_n\}$ for $\C^{n|0}$ and an ordered basis $\{w_1,\ldots,w_m\}$ of $\C^{0|m}$ so that the Lie superalgebra $\gl(n|m)$ can be realized as $(n+m)\times (n+m)$ complex matrices. We denote by $E_{ij}$ the elementary matrix with $(i,j)$-entry $1$ and other entries $0$. Then the subalgebra of diagonal matrices is the standard Cartan subalgebra $\underline{\h}$ with standard basis $\{E_{11},\ldots,E_{nn},E_{n+1\,n+1},\ldots,E_{n+m\,n+m}\}$ so that its dual $\underline{\h}^*$ has dual basis $\{\delta_1,\ldots,\delta_n,\epsilon_1,\ldots,\epsilon_m\}$. The standard Borel subalgebra $\underline{\mf b}^{\texttt{st}}$ of $\gl(n|m)$ is the subalgebra of upper triangular matrices and its corresponding Dynkin diagram is given by

\begin{equation}\label{ABC:diagram:A1}
\hskip 1cm \setlength{\unitlength}{0.16in}
\begin{picture}(24,1)
\put(-.5,0.3){\makebox(0,0)[c]{$\gl(n|m)$}}
\put(5.7,0.5){\makebox(0,0)[c]{$\bigcirc$}}
\put(8,0.5){\makebox(0,0)[c]{$\bigcirc$}}
\put(10.4,0.5){\makebox(0,0)[c]{$\cdots$}}
\put(12.5,0.45){\makebox(0,0)[c]{$\bigotimes$}}
\put(14.85,0.5){\makebox(0,0)[c]{$\bigcirc$}}
\put(17.1,0.5){\makebox(0,0)[c]{$\cdots$}}
\put(19.3,0.5){\makebox(0,0)[c]{$\bigcirc$}}
\put(6.1,0.5){\line(1,0){1.4}} \put(8.4,0.5){\line(1,0){1}}
\put(11,0.5){\line(1,0){1}} \put(13.1,0.5){\line(1,0){1.2}}
\put(15.28,0.5){\line(1,0){1}} \put(17.7,0.5){\line(1,0){1.1}}
\put(5.5,-0.5){\makebox(0,0)[c]{\tiny $\delta_1-\delta_{2}$}}
\put(8.2,-0.5){\makebox(0,0)[c]{\tiny $\delta_{2}-\delta_{3}$}}
\put(12.1,-0.5){\makebox(0,0)[c]{\tiny $\delta_n-\ep_1$}}
\put(14.8,-0.5){\makebox(0,0)[c]{\tiny $\ep_1-\ep_2$}}
\put(19,-0.5){\makebox(0,0)[c]{\tiny $\ep_{m-1}-\ep_m$}}
\end{picture}
\end{equation}
\vskip 3mm

Define the standard non-degenerate symmetric bilinear form $(\cdot,\cdot)$ on $\underline{\h}^*$ by
\begin{align}\label{bilinear:form}
(\ep_i,\ep_j)=\delta_{ij},\quad (\ep_i,\delta_k)=0,\quad (\delta_k,\delta_l)=-\delta_{kl},
\end{align}
where $1\le i,j\le m$ and $1\le k,l\le n$.

\subsection{Ortho-symplectic Lie superalgebras $\osp(\ell|2n)$}\label{sec:osp def}
Let $\G$ be the ortho-symplectic Lie superalgebra $\osp(\ell|2n)$ with $\ell=2m+1$ or $2m$ for $m,n\ge 1$. Fix the standard Cartan subalgebra $\h$ with its dual $\h^*$. We shall use the standard notation for the standard basis of $\h^*$ denoted by $\{\delta_1,\ldots,\delta_n,\ep_1,\ldots,\ep_m\}$ (see, e.g., \cite[Sections 1.2.4 and 1.2.5]{CW}) so that the standard Dynkin diagrams with simple roots indicated are as follows:

\begin{equation}\label{ABC:diagram:B1}
\hskip 1cm \setlength{\unitlength}{0.16in}
\begin{picture}(24,1)
\put(-.5,0.3){\makebox(0,0)[c]{$\osp(2m+1|2n)$}}
\put(5.7,0.5){\makebox(0,0)[c]{$\bigcirc$}}
\put(8,0.5){\makebox(0,0)[c]{$\bigcirc$}}
\put(10.4,0.5){\makebox(0,0)[c]{$\cdots$}}
\put(12.5,0.45){\makebox(0,0)[c]{$\bigotimes$}}
\put(14.85,0.5){\makebox(0,0)[c]{$\bigcirc$}}
\put(17.1,0.5){\makebox(0,0)[c]{$\cdots$}}
\put(19.3,0.5){\makebox(0,0)[c]{$\bigcirc$}}
\put(21.7,0.5){\makebox(0,0)[c]{$\bigcirc$}}
\put(6.1,0.5){\line(1,0){1.4}} \put(8.4,0.5){\line(1,0){1}}
\put(11,0.5){\line(1,0){1}} \put(13.1,0.5){\line(1,0){1.2}}
\put(15.28,0.5){\line(1,0){1}} \put(17.7,0.5){\line(1,0){1.1}}
\put(19.7,0.25){$\Longrightarrow$}
\put(5.5,-0.5){\makebox(0,0)[c]{\tiny $\delta_1-\delta_{2}$}}
\put(8.2,-0.5){\makebox(0,0)[c]{\tiny $\delta_{2}-\delta_{3}$}}
\put(12.2,-0.5){\makebox(0,0)[c]{\tiny $\delta_n-\ep_1$}}
\put(14.8,-0.5){\makebox(0,0)[c]{\tiny $\ep_1-\ep_2$}}
\put(19,-0.5){\makebox(0,0)[c]{\tiny $\ep_{m-1}-\ep_m$}}
\put(21.75,-0.5){\makebox(0,0)[c]{\tiny $\ep_{m}$}}
\end{picture}
\end{equation}

\begin{equation}\label{ABC:diagram:D1}
\hskip 1cm\setlength{\unitlength}{0.16in}
\begin{picture}(24,3)
\put(-1,0.3){\makebox(0,0)[c]{$\osp(2m|2n)$}}
\put(5.7,0.5){\makebox(0,0)[c]{$\bigcirc$}}
\put(8,0.5){\makebox(0,0)[c]{$\bigcirc$}}
\put(10.4,0.5){\makebox(0,0)[c]{$\cdots$}}
\put(12.5,0.45){\makebox(0,0)[c]{$\bigotimes$}}
\put(14.85,0.5){\makebox(0,0)[c]{$\bigcirc$}}
\put(17.45,0.5){\makebox(0,0)[c]{$\cdots$}}
\put(19.8,0.5){\makebox(0,0)[c]{$\bigcirc$}}
\put(21.6,2.2){\makebox(0,0)[c]{$\bigcirc$}}
\put(21.6,-1.2){\makebox(0,0)[c]{$\bigcirc$}}
\put(6.1,0.5){\line(1,0){1.4}}
\put(8.4,0.5){\line(1,0){1}}
\put(11,0.5){\line(1,0){1}}
\put(13.1,0.5){\line(1,0){1.2}}
\put(15.28,0.5){\line(1,0){1}}
\put(18.2,0.5){\line(1,0){1.1}}
\put(20.2,0.8){\line(1,1){1.1}}
\put(20.2,0.2){\line(1,-1){1.1}}
\put(5.5,-0.5){\makebox(0,0)[c]{\tiny $\delta_1-\delta_{2}$}}
\put(8.2,-0.5){\makebox(0,0)[c]{\tiny $\delta_{2}-\delta_{3}$}}
\put(12.3,-0.5){\makebox(0,0)[c]{\tiny $\delta_n-\ep_1$}}
\put(14.8,-0.5){\makebox(0,0)[c]{\tiny $\ep_1-\ep_2$}}
\put(18.8,-0.5){\makebox(0,0)[c]{\tiny $\ep_{m-2}-\ep_{m-1}$}}
\put(22.6,-2.2){\makebox(0,0)[c]{\tiny $\ep_{m-1}+\ep_{m}$}}
\put(22.6,3.2){\makebox(0,0)[c]{\tiny $\ep_{m-1}-\ep_{m}$}}
\end{picture}
\end{equation}
\vskip 1cm
\noindent Note that the basis above for $\h^*$ is precisely the basis for $\underline{\h}^*$ in Section \ref{sec:glnm} for the copies of $\gl(n|m)$ that are obtained by removing the last nodes corresponding to $\epsilon_m$ and $\epsilon_{m-1}+\epsilon_m$ in the Dynkin diagrams \eqref{ABC:diagram:B1} and \eqref{ABC:diagram:D1}, respectively.

We take our standard non-degenerate bilinear form $(\cdot,\cdot)$ on $\h^*$ to be the one determined by the same formulas as in \eqref{bilinear:form}.
We write $\mf{b}^{\texttt{st}}$ for the standard Borel subalgebras corresponding to \eqref{ABC:diagram:B1} and \eqref{ABC:diagram:D1}. Note that $\Phi^+_{{\mf b}^{\texttt{st}}}=\Phi^+_{{\mf b}^{\texttt{st}},\even}\cup \Phi^+_{{\mf b}^{\texttt{st}},\odd}$ is given by
\begin{equation}
\begin{cases}
\{\,\delta_i\pm\delta_j, 2\delta_p, \epsilon_k\pm\epsilon_l,\epsilon_q\,\}\cup\{\delta_p\pm \ep_q,\delta_p\,\}, & \text{for $\G=\osp(2m+1|2n)$},\\
\{\,\delta_i\pm\delta_j, 2\delta_p, \epsilon_k\pm \epsilon_l\,\}\cup\{\delta_p\pm \ep_q \,\}, & \text{for $\G=\osp(2m|2n)$},
\end{cases}
\end{equation}
where the indices are over $1\leq i<j\leq n$, $1\leq k<l\leq m$, $1\leq p\leq n$, and $1\leq q\leq m$, and $\rho^{{\mf b}^{\texttt{st}}}$ is given by
\begin{equation}\label{eq:rhost}
\begin{cases}
\sum_{i=1}^n\left(n-m-i+\hf\right)\delta_i + \sum_{j=1}^m(m-j+\hf)\ep_j, & \text{if $\G=\osp(2m+1|2n)$},\\
\sum_{i=1}^n(n-m-i+1)\delta_i + \sum_{j=1}^m(m-j)\ep_j, & \text{if $\G=\osp(2m|2n)$}.
\end{cases}
\end{equation}

We also consider the following Dynkin diagrams for $\G$:
\begin{itemize}
\item[$\bullet$] $\G=\osp(2m+1|2n)$
\begin{center}
\begin{align*}\label{ABC:diagram:B2}
\hskip -4.2cm \setlength{\unitlength}{0.16in}
\begin{picture}(27,1)
\put(5.7,0.5){\makebox(0,0)[c]{$\bigotimes$}}
\put(8,0.5){\makebox(0,0)[c]{$\bigotimes$}}
\put(10.6,0.5){\makebox(0,0)[c]{$\cdots$}}
\put(12.6,0.45){\makebox(0,0)[c]{$\bigotimes$}}
\put(14.85,0.5){\makebox(0,0)[c]{$\bigotimes$}}
\put(17.2,0.5){\makebox(0,0)[c]{$\cdots$}}
\put(19.3,0.5){\makebox(0,0)[c]{$\bigotimes$}}
\put(21.7,0.5){\makebox(0,0)[c]{\CircleSolid}}
\put(6.1,0.5){\line(1,0){1.4}} \put(8.5,0.5){\line(1,0){1.4}}
\put(11.2,0.5){\line(1,0){1}} \put(13.1,0.5){\line(1,0){1.2}}
\put(15.28,0.5){\line(1,0){1}} \put(17.7,0.5){\line(1,0){1.1}}
\put(19.7,0.25){$\Longrightarrow$}
\put(5.5,-0.5){\makebox(0,0)[c]{\tiny $\ep_1-\delta_1$}}
\put(8,-0.5){\makebox(0,0)[c]{\tiny $\delta_1-\ep_{2}$}}
\put(19.2,-0.5){\makebox(0,0)[c]{\tiny $\ep_{m}-\delta_m$}}
\put(21.75,-0.5){\makebox(0,0)[c]{\tiny $\delta_{m}$}}
\put(26.5,0.7){\makebox(0,0)[c]{for $m=n$,}}
\end{picture}
\end{align*}
\vskip 0.4cm
\end{center}
 \begin{center}
\begin{equation*}\label{ABC:diagram:B3}
\hskip -3cm \setlength{\unitlength}{0.16in}
\begin{picture}(30,1.5)
\put(5.7,0.5){\makebox(0,0)[c]{$\bigcirc$}}
\put(8.25,0.5){\makebox(0,0)[c]{$\cdots$}}
\put(10.7,0.5){\makebox(0,0)[c]{$\bigcirc$}}
\put(12.7,0.45){\makebox(0,0)[c]{$\bigotimes$}}
\put(14.85,0.5){\makebox(0,0)[c]{$\bigotimes$}}
\put(17.1,0.5){\makebox(0,0)[c]{$\cdots$}}
\put(19.3,0.5){\makebox(0,0)[c]{$\bigotimes$}}
\put(21.7,0.5){\makebox(0,0)[c]{\CircleSolid}}
\put(6.1,0.5){\line(1,0){1.4}} \put(8.9,0.5){\line(1,0){1.4}}
\put(11.1,0.5){\line(1,0){1.1}} \put(13.1,0.5){\line(1,0){1.2}}
\put(15.28,0.5){\line(1,0){1}} \put(17.7,0.5){\line(1,0){1.1}}
\put(19.7,0.25){$\Longrightarrow$}
\put(5.5,-0.5){\makebox(0,0)[c]{\tiny $\ep_1-\ep_{2}$}}
\put(10.5,1.5){\makebox(0,0)[c]{\tiny $\ep_{l}-\ep_{l+1}$}}
\put(12.4,-0.5){\makebox(0,0)[c]{\tiny $\ep_{l+1}-\delta_1$}}
\put(15,1.5){\makebox(0,0)[c]{\tiny $\delta_1-\ep_{l+2}$}}
\put(19,-0.5){\makebox(0,0)[c]{\tiny $\ep_{m}-\delta_n$}}
\put(21.75,-0.5){\makebox(0,0)[c]{\tiny $\delta_{n}$}}
\put(30,0.7){\makebox(0,0)[c]{for $m>n$ with $l=m-n,$}}
\end{picture}
\end{equation*}
\vskip 0.4cm
\end{center}
\begin{center}
\begin{equation*}\label{ABC:diagram:B4}
\hskip -3cm \setlength{\unitlength}{0.16in}
\begin{picture}(30,1.5)
\put(5.7,0.5){\makebox(0,0)[c]{$\bigcirc$}}
\put(8.25,0.5){\makebox(0,0)[c]{$\cdots$}}
\put(10.7,0.5){\makebox(0,0)[c]{$\bigcirc$}}
\put(12.7,0.5){\makebox(0,0)[c]{$\bigotimes$}}
\put(14.85,0.5){\makebox(0,0)[c]{$\bigotimes$}}
\put(17.1,0.5){\makebox(0,0)[c]{$\cdots$}}
\put(19.3,0.5){\makebox(0,0)[c]{$\bigotimes$}}
\put(21.7,0.5){\makebox(0,0)[c]{\CircleSolid}}
\put(6.1,0.5){\line(1,0){1.4}} \put(8.85,0.5){\line(1,0){1.4}}
\put(11.1,0.5){\line(1,0){1.1}} \put(13.1,0.5){\line(1,0){1.2}}
\put(15.28,0.5){\line(1,0){1}} \put(17.7,0.5){\line(1,0){1.1}}
\put(19.7,0.25){$\Longrightarrow$}
\put(5.5,-0.5){\makebox(0,0)[c]{\tiny $\delta_1-\delta_{2}$}}
\put(10.5,1.5){\makebox(0,0)[c]{\tiny $\delta_{l-1}-\delta_{l}$}}
\put(12.5,-0.5){\makebox(0,0)[c]{\tiny $\delta_{l}-\ep_1$}}
\put(15,1.5){\makebox(0,0)[c]{\tiny $\ep_1 -\delta_{l+1}$}}
\put(19,-0.5){\makebox(0,0)[c]{\tiny $\ep_{m}-\delta_n$}}
\put(21.75,-0.5){\makebox(0,0)[c]{\tiny $\delta_{n}$}}
\put(30,0.7){\makebox(0,0)[c]{for $n>m$ with $l=n-m.$}}
\end{picture}
\end{equation*}
\vskip 0.8cm
\end{center}

\item[$\bullet$] $\G=\osp(2m|2n)$
\begin{center}
\begin{align*}\label{ABC:diagram:D2}
\hskip -4.2cm \setlength{\unitlength}{0.16in}
\begin{picture}(27,3)
\put(5.7,0.5){\makebox(0,0)[c]{$\bigotimes$}}
\put(8,0.5){\makebox(0,0)[c]{$\bigotimes$}}
\put(10.6,0.5){\makebox(0,0)[c]{$\cdots$}}
\put(12.6,0.45){\makebox(0,0)[c]{$\bigotimes$}}
\put(14.85,0.5){\makebox(0,0)[c]{$\bigotimes$}}
\put(17.2,0.5){\makebox(0,0)[c]{$\cdots$}}
\put(19.3,0.5){\makebox(0,0)[c]{$\bigotimes$}}
\put(21,2.2){\makebox(0,0)[c]{$\bigotimes$}}
\put(21,-1.2){\makebox(0,0)[c]{$\bigotimes$}}
\put(6.1,0.5){\line(1,0){1.4}} \put(8.5,0.5){\line(1,0){1.4}}
\put(11.2,0.5){\line(1,0){1}} \put(13.1,0.5){\line(1,0){1.2}}
\put(15.28,0.5){\line(1,0){1}} \put(17.7,0.5){\line(1,0){1.1}}
\put(19.6,0.8){\line(1,1){1.1}}
\put(19.6,0.2){\line(1,-1){1.1}}
\put(21,1.7){\line(0,-1){2.5}}
\put(5.5,-0.5){\makebox(0,0)[c]{\tiny $\delta_1-\ep_1$}}
\put(8,-0.5){\makebox(0,0)[c]{\tiny $\ep_1-\delta_{2}$}}
\put(18.6,-0.5){\makebox(0,0)[c]{\tiny $\ep_{m-1}-\delta_m$}}
\put(21,-2.1){\makebox(0,0)[c]{\tiny $\delta_{m}+\ep_m$}}
\put(21,3.2){\makebox(0,0)[c]{\tiny $\delta_{m}-\ep_m$}}
\put(26.5,0.7){\makebox(0,0)[c]{for $m=n$,}}
\end{picture}
\end{align*}
\vskip 1.4cm
\end{center}
 \begin{center}
\begin{equation*}\label{ABC:diagram:D3}
\hskip -3cm \setlength{\unitlength}{0.16in}
\begin{picture}(30,2)
\put(5.7,0.5){\makebox(0,0)[c]{$\bigcirc$}}
\put(8.25,0.5){\makebox(0,0)[c]{$\cdots$}}
\put(10.7,0.5){\makebox(0,0)[c]{$\bigcirc$}}
\put(12.7,0.45){\makebox(0,0)[c]{$\bigotimes$}}
\put(14.85,0.45){\makebox(0,0)[c]{$\bigotimes$}}
\put(17.1,0.5){\makebox(0,0)[c]{$\cdots$}}
\put(19.3,0.5){\makebox(0,0)[c]{$\bigotimes$}}
\put(21,2.2){\makebox(0,0)[c]{$\bigotimes$}}
\put(21,-1.2){\makebox(0,0)[c]{$\bigotimes$}}
\put(6.1,0.5){\line(1,0){1.4}} \put(8.9,0.5){\line(1,0){1.4}}
\put(11.1,0.5){\line(1,0){1.1}} \put(13.1,0.5){\line(1,0){1.2}}
\put(15.28,0.5){\line(1,0){1}} \put(17.7,0.5){\line(1,0){1.1}}
\put(19.6,0.8){\line(1,1){1.1}}
\put(19.6,0.2){\line(1,-1){1.1}}
\put(21,1.7){\line(0,-1){2.5}}
\put(5.5,-0.5){\makebox(0,0)[c]{\tiny $\ep_1-\ep_{2}$}}
\put(10.5,1.5){\makebox(0,0)[c]{\tiny $\ep_{l-1}-\ep_{l}$}}
\put(12.6,-0.5){\makebox(0,0)[c]{\tiny $\ep_{l}-\delta_1$}}
\put(15,1.5){\makebox(0,0)[c]{\tiny $\delta_1-\ep_{l+1}$}}
\put(18.6,-0.5){\makebox(0,0)[c]{\tiny $\ep_{m-1}-\delta_n$}}
\put(21,-2.1){\makebox(0,0)[c]{\tiny $\delta_{n}+\ep_m$}}
\put(21,3.2){\makebox(0,0)[c]{\tiny $\delta_{n}-\ep_m$}}
\put(30,0.7){\makebox(0,0)[c]{for $m>n$ with $l=m-n$,}}
\end{picture}
\end{equation*}
\vskip 1.4cm
\end{center}
\begin{center}
\begin{equation*}\label{ABC:diagram:D4}
\hskip -3cm \setlength{\unitlength}{0.16in}
\begin{picture}(30,2)
\put(5.7,0.5){\makebox(0,0)[c]{$\bigcirc$}}
\put(8.25,0.5){\makebox(0,0)[c]{$\cdots$}}
\put(10.7,0.5){\makebox(0,0)[c]{$\bigcirc$}}
\put(12.75,0.5){\makebox(0,0)[c]{$\bigotimes$}}
\put(14.85,0.5){\makebox(0,0)[c]{$\bigotimes$}}
\put(17.1,0.5){\makebox(0,0)[c]{$\cdots$}}
\put(19.3,0.5){\makebox(0,0)[c]{$\bigotimes$}}
\put(21,2.2){\makebox(0,0)[c]{$\bigotimes$}}
\put(21,-1.2){\makebox(0,0)[c]{$\bigotimes$}}
\put(6.1,0.5){\line(1,0){1.4}} \put(8.85,0.5){\line(1,0){1.4}}
\put(11.1,0.5){\line(1,0){1.1}} \put(13.2,0.5){\line(1,0){1.2}}
\put(15.28,0.5){\line(1,0){1}} \put(17.7,0.5){\line(1,0){1.1}}
\put(19.6,0.8){\line(1,1){1.1}}
\put(19.6,0.2){\line(1,-1){1.1}}
\put(21,1.7){\line(0,-1){2.5}}
\put(5.5,-0.5){\makebox(0,0)[c]{\tiny $\delta_1-\delta_{2}$}}
\put(10.8,1.5){\makebox(0,0)[c]{\tiny $\delta_{l}-\delta_{l+1}$}}
\put(12.5,-0.5){\makebox(0,0)[c]{\tiny $\delta_{l+1}-\ep_1$}}
\put(15,1.5){\makebox(0,0)[c]{\tiny $\ep_1 -\delta_{l+2}$}}
\put(18.6,-0.5){\makebox(0,0)[c]{\tiny $\ep_{m-1}-\delta_n$}}
\put(21,-2.1){\makebox(0,0)[c]{\tiny $\delta_{n}+\ep_m$}}
\put(21,3.2){\makebox(0,0)[c]{\tiny $\delta_{n}-\ep_m$}}
\put(30,0.7){\makebox(0,0)[c]{for $n>m$ with $l=n-m$.}}
\end{picture}
\end{equation*}
\vskip 1cm
\end{center}

\end{itemize}

We shall denote by $\mf b^{\texttt{odd}}$ the Borel subalgebras of $\G$ corresponding to the Dynkin diagrams above. As the notation indicates, these Borel subalgebras contain the maximal number of odd isotropic simple roots. They play a fundamental role in \cite{GS} and they shall play an important role in this article as well.

Let $V$ be a finite-dimensional irreducible $\G$-module. Recall the decomposition into weight spaces $V=\bigoplus_{\gamma\in\h^*}V_\gamma$. We say that $V$ is an {\em integer weight module}, if for all $\gamma$ with $V_\gamma\not=0$, we have $\gamma=\sum_{i=1}^n\gamma_i\delta_i+\sum_{j=1}^m\kappa_j\ep_j$ with $\kappa_j\in\Z$. The module $V$ is called a {\em half-integer weight module}, if for all $\gamma$ with $V_\gamma\not=0$, we have $\gamma=\sum_{i=1}^n\gamma_i\delta_i+\sum_{j=1}^m\kappa_j\ep_j$ with $\kappa_j\in\hf+\Z$. Note that if $V$ is a finite-dimensional irreducible half-integer weight module, then $V$ (with respect to any Borel subalgebra) is typical. Thus, in the sequel we shall restrict ourselves to the interesting case of integer weight modules.

Let ${\mf b}$ be a Borel subalgebra.
Given an odd isotropic simple root $\alpha\in \Pi_{\mf b}$, let $s_\alpha$ be the corresponding odd reflection.
Let ${\mf b}'$ be the Borel subalgebra with fundamental system $\Pi_{\mf b'}=s_\alpha(\Pi_{\mf b})$.
Consider a finite-dimensional irreducible $\G$-module $V=L_{\mf b}(\G,\gamma)$ with ${\mf b}$-highest weight $\gamma$.  If $\gamma'$ denotes the ${\mf b}'$-highest weight of $V$, that is, $L_{\mf b}(\G,\gamma)\cong L_{\mf b'}(\G,\gamma')$, then \cite[Lemma 1]{PS} (see also \cite[Lemma 1.40]{CW})
\begin{equation}\label{eq:odd reflection rho}
\gamma'+\rho^{{\mf b}'}=
\begin{cases}
\gamma + \rho^{{\mf b}}, & \text{if $(\gamma,\alpha)\neq 0$},\\
\gamma + \rho^{{\mf b}} + \alpha , & \text{if $(\gamma,\alpha)= 0$}.
\end{cases}
\end{equation}

\subsection{Borel subalgebras and polynomial $\gl(n|m)$-modules}\label{subsec:gl:po}

Recall from Section \ref{sec:glnm} the ordered basis $\{v_1,\ldots,v_n,w_1,\ldots,w_m\}$ of $\C^{n|m}$ so that the Lie superalgebra $\mf l=\gl(n|m)$ can be realized as $(n+m)\times (n+m)$ complex matrices. A total ordering of the basis $\{v_1,\ldots,v_n,w_1,\ldots,w_m\}$ that preserves the ordering among the $v_i$'s and the $w_j$'s gives rise to a Borel subalgebra $\underline{\mf b}$ of $\mf l$ with $\underline{\mf b}_\even=\left(\underline{\mf b}^{\tt st}\right)_\even$.  Such an ordering $\cdots v_i\cdots w_j\cdots$ is clearly determined by replacing all $v_i$'s with $\delta$'s and all $w_j$'s with $\ep$'s so that we get a sequence with $m$ $\ep$'s and $n$ $\delta$'s, which we call an {\it $\ep\delta$-sequence}. It is known that the $W_{\mf l}$-conjugacy classes of the Borel subalgebras of $\mf l$, or the Borel subalgebras $\underline{\mf b}$ of ${\mf l}$ with $\underline{\mf b}_\even=\left(\underline{\mf b}^{\tt st}\right)_\even$,  are in one-to-one correspondence with such total orderings and hence with such $\ep\delta$-sequences (see e.g.~\cite{K2} or \cite[Section 1.3.2]{CW}). In fact, we can produce a Dynkin diagram for $\mf{l}$ corresponding to such an $\ep\delta$-sequence as follows: We number all the $m$ $\ep$'s in the $\ep\delta$-sequence from left to right starting with $\ep_1$ and ending with $\ep_m$. Similarly, we label the $n$ $\delta$'s in the sequence from left to right starting with $\delta_1$ and ending with $\delta_n$. We call this $\ep\delta$-sequence a {\em numbered $\ep\delta$-sequence}. Now, from left to right we write all the differences of two consecutive members in the numbered $\ep\delta$-sequence. This way we get a fundamental system for $\mf l$. The $W_{\mf l}$-conjugacy class of this fundamental system then corresponds uniquely to this $\ep\delta$-sequence.

\begin{example}\label{ex:00}
Consider $\gl(2|3)$ with $n=2$ and $m=3$.  Take $\ep\delta$-sequences $\ep\ep\delta\ep\delta$ and $\ep\ep\delta\delta\ep$. The numbered sequences are $\ep_1\ep_2\delta_1\ep_3\delta_2$ and $\ep_1\ep_2\delta_1\delta_2\ep_3$, respectively. By taking all consecutive differences we get the following sets of fundamental systems:
\begin{align*}
&\{\ep_1-\ep_2, \ep_2-\delta_1, \delta_1-\ep_3,\ep_3-\delta_2\},\\
&\{\ep_1-\ep_2, \ep_2-\delta_1, \delta_1-\delta_2,\delta_2-\ep_3\},
\end{align*}
with respective Dynkin diagrams:
\begin{center}
\begin{equation}\label{ABC:diagram:A2}
\hskip -2cm
\setlength{\unitlength}{0.2in}
\begin{picture}(24,1)
\put(5.7,0.5){\makebox(0,0)[c]{$\bigcirc$}}
\put(8,0.5){\makebox(0,0)[c]{$\bigotimes$}}
\put(10,0.5){\makebox(0,0)[c]{$\bigotimes$}}
\put(12,0.45){\makebox(0,0)[c]{$\bigotimes$}}
\put(6.1,0.5){\line(1,0){1.5}} \put(8.3,0.5){\line(1,0){1.4}}
\put(10.3,0.5){\line(1,0){1.4}}
\put(5.5,-0.5){\makebox(0,0)[c]{\tiny $\ep_1-\ep_{2}$}}
\put(8,-0.5){\makebox(0,0)[c]{\tiny $\ep_{2}-\delta_{1}$}}
\put(10,-0.5){\makebox(0,0)[c]{\tiny $\delta_{1}-\ep_{3}$}}
\put(12.1,-0.5){\makebox(0,0)[c]{\tiny $\ep_3-\delta_2$}}
\put(15.7,0.5){\makebox(0,0)[c]{$\bigcirc$}}
\put(18,0.5){\makebox(0,0)[c]{$\bigotimes$}}
\put(20,0.5){\makebox(0,0)[c]{$\bigcirc$}}
\put(22,0.45){\makebox(0,0)[c]{$\bigotimes$}}
\put(16.1,0.5){\line(1,0){1.5}} \put(18.3,0.5){\line(1,0){1.4}}
\put(20.3,0.5){\line(1,0){1.4}}
\put(15.5,-0.5){\makebox(0,0)[c]{\tiny $\ep_1-\ep_{2}$}}
\put(18,-0.5){\makebox(0,0)[c]{\tiny $\ep_{2}-\delta_{1}$}}
\put(20,-0.5){\makebox(0,0)[c]{\tiny $\delta_{1}-\delta_{2}$}}
\put(22.1,-0.5){\makebox(0,0)[c]{\tiny $\delta_2-\ep_3$}}
\end{picture}
\end{equation}
\vskip 0.4cm
\end{center}
\end{example}
The Dynkin diagram attached to the standard Borel subalgebra $\underline{\mf{b}}^{\texttt{st}}$ in \eqref{ABC:diagram:A1} corresponds to the $\ep\delta$-sequence
$\underbrace{\delta\ldots\delta}_n\underbrace{\ep\ldots\ep}_m$.

Recall that a partition $\la=(\la_1,\la_2,\ldots)$ is called an $(n|m)$-hook partition, if it satisfies $\la_{n+1}\le m$. We denote the set of $(n|m)$-hook partitions by $\mc{H}(n|m)$. Given $\la\in\mc{H}(n|m)$, we define weights $\la^\natural, \la_-^\natural\in\underline{\h}^*$ by
\begin{equation}\label{lambda:natural}
\begin{split}
\la^\natural&:=\sum_{i=1}^n\la_i\delta_i+\sum_{j=1}^m\kappa_j\ep_j,\\
\la_-^\natural&:=\sum_{i=1}^n\la_i\delta_i+\sum_{j=1}^{m-1}\kappa_j\ep_j-\kappa_m\ep_m.
\end{split}
\end{equation}
where $(\kappa_1,\ldots,\kappa_m)$ is the transpose of the partition $(\la_{n+1},\la_{n+2},\ldots)$.

It is known that the finite-dimensional irreducible polynomial $\mf l$-modules are parameterized by $\mc{H}(n|m)$, that is, an irreducible  $\mf l$-module with $\underline{\mf b}^{\texttt{st}}$-highest weight $\gamma$ is an irreducible polynomial ${\mf l}$-module if and only if $\gamma=\la^\natural$ for some $\la\in\mc{H}(n|m)$ \cite{Sv,BR} (see also \cite[Section 3.2.6]{CW}).

Now, take a Borel subalgebra $\underline{\mf{b}}$ with $\underline{\mf{b}}_{\bar 0}=(\underline{\mf{b}}^{\texttt{st}})_{\bar 0}$. Then $\underline{\mf b}$ corresponds to an $\ep\delta$-sequence of the form
$\delta^{d_1} \ep^{e_1}\delta^{d_2}\ep^{e_2}\cdots\delta^{d_r}\ep^{e_r}$, where the exponents denote the corresponding
multiplicities
(all $d_i,e_i$ are positive except possibly $d_1 =0$ or $e_r =0$).
Let
\begin{equation*}\label{the:d:e}
\texttt d_u :=\sum_{a=1}^u d_a, \quad \quad \texttt e_u
:=\sum_{a=1}^u e_a,
\end{equation*}
for $1\leq u\leq  r$. Set $\texttt d_0 =\texttt e_0 =0$. Note that
$\texttt d_r=n$ and $\texttt e_r=m$. For each $\la\in \mc{H}(n|m)$, we define the {\em Frobenius
coordinates} $(p_i|q_j)$ of $\la$ corresponding to $\underline{\mf{b}}$ as
follows \cite[Section 2.4]{CW}:
\begin{align*}
\begin{split}
\begin{cases}
p_i = \max \{\la_i - \texttt e_u, 0\},  & \text{ if }\texttt d_u
< i \leq \texttt d_{u+1}  \text{ for some } 0\le u\le
r-1,\label{block:frobenius}
  \\
q_j  = \max \{\la_j' - \texttt d_{u+1}, 0\},&  \text{ if }
\texttt e_u < j \leq \texttt e_{u+1}  \text{ for some } 0\le u\le
r-1,
\end{cases}
\end{split}
\end{align*}
for $1\le i \le n$ and $1\le j \le m$. Here $\la'=(\la'_1,\la'_2,\ldots)$ denotes the transpose of $\la$. Then we define weights $\la^{\underline{\mf{b}}}, \la_-^{\underline{\mf{b}}} \in \underline{\h}^*$ by
\begin{equation}\label{frob:ht:wt}
\begin{split}
\la^{\underline{\mf{b}}}& :=\sum_{i=1}^n  p_i \delta_{i} +\sum_{j=1}^m  q_j
\epsilon_{j},\\
\la_-^{\underline{\mf{b}}}& :=\sum_{i=1}^n  p_i \delta_{i} +\sum_{j=1}^{m-1}  q_j
\epsilon_{j}-q_m\ep_m.
\end{split}
\end{equation}
Note that  $\la^\natural=\la^{\underline{\mf b}}$ when $\underline{\mf b} =\underline{\mf b}^{\texttt{st}}$, and we have \cite{CLW} (see also \cite[Theorem 2.55]{CW})
\begin{align*}
L_{\underline{\mf{b}}^{\texttt{st}}}(\mf l,\la^\natural)=L_{\underline{\mf{b}}^{\texttt{st}}}(\mf l,\la^{\underline{\mf{b}}^{\texttt{st}}}) \cong L_{\underline{\mf{b}}}(\mf l,\la^{\underline{\mf{b}}}).
\end{align*}

\subsection{Borel subalgebras and finite-dimensional $\osp(2m+1|2n)$-modules}\label{sec:Borels B}

Suppose that $\G=\osp(2m+1|2n)$.  We assume that ${\mf l}=\gl(n|m)$ is the subalgebra of $\G$ corresponding to the subdiagram \eqref{ABC:diagram:A1} of \eqref{ABC:diagram:B1}.

According to \cite{K2} (see also \cite[Section 1.3.3]{CW}) the $W$-conjugacy classes of Borel subalgebras ${\mf b}$ of $\G$, or the Borel subalgebras ${\mf b}$ of $\G$ with ${\mf b}_\even=\left({\mf b}^{\tt st}\right)_\even$, are in one-to-one correspondence with $\ep\delta$-sequences with $m$ $\ep$'s and $n$ $\delta$'s.  In fact, we can produce a fundamental system $\Pi$ and its Dynkin diagram corresponding to such an $\ep\delta$-sequence as follows:  We first construct the  fundamental system $\underline{\Pi}$ for ${\mf l}$ associated to the $\epsilon\delta$-sequence as in Section \ref{subsec:gl:po} so that we get a corresponding Dynkin diagram $\texttt{D}$. Next, if the $\ep\delta$-sequence ends with an $\ep$, then we put $\Pi=\underline{\Pi}\cup \{\ep_m\}$ and attach $\bigcirc$ for the simple short root $\ep_m$ to the right end of $\texttt D$, and if the sequence ends with a $\delta$, then we put $\Pi=\underline{\Pi}\cup \{\delta_n\}$ and attach \raisebox{-0.5ex}{\CircleSolid} for the non-isotropic odd simple root $\delta_n$ to the right end of $\texttt D$.

Note that the sequence $\underbrace{\delta\ldots\delta}_n\underbrace{\ep\ldots\ep}_m$ gives rise to the diagram \eqref{ABC:diagram:B1} and hence corresponds to ${\mf b}^{\tt st}$, while the sequence $\underbrace{\ep\ldots\ep}_m\underbrace{\delta\ldots\delta}_n$ \ gives the diagram \vskip 3mm
\begin{center}
\vskip -0.3cm
\begin{equation*}\label{ABC:diagram:B5}
\hskip -1cm \setlength{\unitlength}{0.16in}
\begin{picture}(24,1)
\put(5.7,0.5){\makebox(0,0)[c]{$\bigcirc$}}
\put(8,0.5){\makebox(0,0)[c]{$\bigcirc$}}
\put(10.4,0.5){\makebox(0,0)[c]{$\cdots$}}
\put(12.5,0.45){\makebox(0,0)[c]{$\bigotimes$}}
\put(14.85,0.5){\makebox(0,0)[c]{$\bigcirc$}}
\put(17.25,0.5){\makebox(0,0)[c]{$\cdots$}}
\put(19.3,0.5){\makebox(0,0)[c]{$\bigcirc$}}
\put(21.7,0.5){\makebox(0,0)[c]{\CircleSolid}}
\put(6.1,0.5){\line(1,0){1.4}} \put(8.4,0.5){\line(1,0){1}}
\put(11,0.5){\line(1,0){1}} \put(13.1,0.5){\line(1,0){1.2}}
\put(15.28,0.5){\line(1,0){1}} \put(17.7,0.5){\line(1,0){1.1}}
\put(19.7,0.25){$\Longrightarrow$}
\put(5.2,-0.5){\makebox(0,0)[c]{\tiny $\ep_1-\ep_{2}$}}
\put(8.2,-0.5){\makebox(0,0)[c]{\tiny $\ep_{2}-\ep_{3}$}}
\put(12.2,-0.5){\makebox(0,0)[c]{\tiny $\ep_m-\delta_1$}}
\put(14.8,-0.5){\makebox(0,0)[c]{\tiny $\delta_1-\delta_2$}}
\put(19,-0.5){\makebox(0,0)[c]{\tiny $\delta_{n-1}-\delta_n$}}
\put(21.75,-0.5){\makebox(0,0)[c]{\tiny $\delta_{n}$}}
\end{picture}
\end{equation*}
\vskip 0.5cm
\end{center}

\begin{example}
Consider $\osp(7|4)$ with sequences $\ep\ep\delta\ep\delta$ and $\ep\ep\delta\delta\ep$ as in Example \ref{ex:00} so that we get the diagrams in \eqref{ABC:diagram:A2} with respective fundamental systems
\begin{align*}
&\{\ep_1-\ep_2, \ep_2-\delta_1, \delta_1-\ep_3,\ep_3-\delta_2\},\\
&\{\ep_1-\ep_2, \ep_2-\delta_1, \delta_1-\delta_2,\delta_2-\ep_3\}.
\end{align*}
The respective Dynkin diagrams for $\osp(7|4)$ are then obtained by attaching simple roots $\delta_2$ and $\ep_3$, respectively, to the right-most nodes, and they are
\begin{center}
\begin{equation*}\label{ABC:diagram:BB1}
\hskip -2cm
\setlength{\unitlength}{0.2in}
\begin{picture}(24,1)
\put(4.85,0.5){\makebox(0,0)[c]{$\bigcirc$}}
\put(7,0.5){\makebox(0,0)[c]{$\bigotimes$}}
\put(9,0.5){\makebox(0,0)[c]{$\bigotimes$}}
\put(11,0.5){\makebox(0,0)[c]{$\bigotimes$}}
\put(12.8,0.5){\makebox(0,0)[c]{\CircleSolid}}
\put(5.25,0.5){\line(1,0){1.35}} \put(7.3,0.5){\line(1,0){1.35}}
\put(9.3,0.5){\line(1,0){1.35}}
\put(11.3,0.3){$\Longrightarrow$}
\put(4.8,-0.5){\makebox(0,0)[c]{\tiny $\ep_1-\ep_{2}$}}
\put(7,-0.5){\makebox(0,0)[c]{\tiny $\ep_{2}-\delta_{1}$}}
\put(9,-0.5){\makebox(0,0)[c]{\tiny $\delta_{1}-\ep_{3}$}}
\put(11.1,-0.5){\makebox(0,0)[c]{\tiny $\ep_3-\delta_2$}}
\put(12.7,-0.5){\makebox(0,0)[c]{\tiny $\delta_2$}}
\put(15.9,0.5){\makebox(0,0)[c]{$\bigcirc$}}
\put(18,0.5){\makebox(0,0)[c]{$\bigotimes$}}
\put(20,0.5){\makebox(0,0)[c]{$\bigcirc$}}
\put(22,0.5){\makebox(0,0)[c]{$\bigotimes$}}
\put(23.8,0.5){\makebox(0,0)[c]{$\bigcirc$}}
\put(16.25,0.5){\line(1,0){1.35}} \put(18.3,0.5){\line(1,0){1.4}}
\put(20.3,0.5){\line(1,0){1.35}}
\put(22.3,0.3){$\Longrightarrow$}
\put(15.7,-0.5){\makebox(0,0)[c]{\tiny $\ep_1-\ep_{2}$}}
\put(18,-0.5){\makebox(0,0)[c]{\tiny $\ep_{2}-\delta_{1}$}}
\put(20,-0.5){\makebox(0,0)[c]{\tiny $\delta_{1}-\delta_{2}$}}
\put(22.1,-0.5){\makebox(0,0)[c]{\tiny $\delta_2-\ep_3$}}
\put(23.7,-0.5){\makebox(0,0)[c]{\tiny $\ep_3$}}
\end{picture}
\end{equation*}
\vskip 0.4cm
\end{center}
\end{example}

It is known that the finite-dimensional irreducible integer weight $\G$-modules are parameterized by $\mc{H}(n|m)$ \cite{K1} (see also \cite[Theorem 2.11]{CW}). Note that given $\la\in\mc{H}(n|m)$, the weight $\la^\natural$ in \eqref{lambda:natural} can be regarded as a weight in $\h^*$ for $\G$.
{Then given $\gamma\in \h^*$, $\gamma$ is a ${\mf b}^{\tt st}$-highest weight of a finite-dimensional irreducible integer weight $\G$-module $V$ if and only if $\gamma=\la^\natural$ for some $\la\in\mc{H}(n|m)$,} that is, $V\cong L_{\mf b^{\texttt{st}}}(\G,\la^\natural)= L(\la^\natural)$.

Now let $\mf b$ be a Borel subalgebra of $\G$ with $\mf b_{\bar 0}=\left(\mf b^{\texttt{st}}\right)_{\bar 0}$.
Then $\mf b$ or $\underline{\mf b}={\mf b}\cap {\mf l}$ determines a unique $\ep\delta$-sequence.
For $\la\in\mc{H}(n|m)$, let $\la^{{\mf b}}$ denote the ${\mf b}$-highest weight for $L(\la^\natural)$, that is, $L_{\mf b}(\G,\la^{\mf b})\cong  L(\la^\natural)$. By
\cite{CLW}, \cite[Theorem 2.58]{CW}, we have
\begin{align}\label{eq:aux004}
\la^{{\mf b}}=\la^{\underline{\mf b}},
\end{align}
where $\la^{\underline{\mf b}}$ is given in \eqref{frob:ht:wt}.

\begin{example}\label{frob:block}
Consider the $(5|4)$-hook partition $\la=(10,9,6,4,4,4,3,2,1,1,1)$.
Let $\mf b$ be the Borel subalgebra of $\osp(9|10)$ associated to the
following fundamental system:

\begin{center}
\hskip -3cm \setlength{\unitlength}{0.16in}
\begin{picture}(24,4)
\put(6,2){\makebox(0,0)[c]{$\bigcirc$}}
\put(8.4,2){\makebox(0,0)[c]{$\bigotimes$}}
\put(10.5,1.95){\makebox(0,0)[c]{$\bigcirc$}}
\put(12.85,2){\makebox(0,0)[c]{$\bigotimes$}}
\put(15.25,2){\makebox(0,0)[c]{$\bigcirc$}}
\put(17.4,2){\makebox(0,0)[c]{$\bigotimes$}}
\put(19.6,1.95){\makebox(0,0)[c]{$\bigcirc$}}
\put(21.9,2){\makebox(0,0)[c]{$\bigotimes$}}
\put(24.3,2){\makebox(0,0)[c]{\CircleSolid}}
\put(6.4,2){\line(1,0){1.55}} \put(8.82,2){\line(1,0){1.3}}
\put(10.9,2){\line(1,0){1.5}} \put(13.28,2){\line(1,0){1.5}}
\put(15.7,2){\line(1,0){1.25}} \put(17.8,2){\line(1,0){1.4}}
\put(20,2){\line(1,0){1.4}}
\put(22.3,1.8){$\Longrightarrow$}
\put(5.8,3){\makebox(0,0)[c]{\tiny $\delta_1-\delta_2$}}
\put(8.4,1){\makebox(0,0)[c]{\tiny $\delta_2-\epsilon_1$}}
\put(10.4,3){\makebox(0,0)[c]{\tiny $\epsilon_1-\epsilon_2$}}
\put(12.8,1){\makebox(0,0)[c]{\tiny $\epsilon_2-\delta_3$}}
\put(15.15,3){\makebox(0,0)[c]{\tiny $\delta_3-\delta_4$}}
\put(17.4,1){\makebox(0,0)[c]{\tiny $\delta_4-\epsilon_3$}}
\put(19.4,3){\makebox(0,0)[c]{\tiny $\epsilon_3-\epsilon_4$}}
\put(21.8,1){\makebox(0,0)[c]{\tiny $\epsilon_4-\delta_5$}}
\put(24.3,1){\makebox(0,0)[c]{\tiny $\delta_{5}$}}
\end{picture}
\end{center}
Then we have $\la^{\mf b}=10\delta_1+9\delta_2+4\delta_3+2\delta_4+9\ep_1+6\ep_2+3\ep_3+2\ep_4$.
\end{example}

\subsection{Borel subalgebras and finite-dimensional $\osp(2m|2n)$-modules}\label{sec:Borels D}

Suppose that $\G=\osp(2m|2n)$. We note that when $m=1$ the Lie superalgebra $\osp(2|2n)$ is of type I, and its finite-dimensional representation theory is different from the case when $m\ge 2$. It is also easier, since all finite-dimensional irreducible modules have degree of atypicality at most one. In this case the atypical irreducible modules, according to van der Jeugt \cite[Theorem 5.6]{vdJ}, afford a Bernstein-Leites type character formula, and one can readily prove that the formula in loc.~cit.~and the Kac-Wakimoto character formula for $\osp(2|2n)$ (see Section \ref{sec:KW formula}) are indeed equivalent. Thus we shall assume that $m\ge 2$ in the sequel when discussing $\osp(2m|2n)$.

Suppose that $\G=\osp(2m|2n)$ with $m\ge 2$.  We assume that ${\mf l}=\gl(n|m)$ is the subalgebra of $\G$ corresponding to the subdiagram \eqref{ABC:diagram:A1} of \eqref{ABC:diagram:D1}.

In this case, we consider $\ep\delta$-sequences with $m$ $\ep$'s and $n$ $\delta$'s, where we assign $\pm$ sign to the right-most $\ep$ for an $\ep\delta$-sequence ending with a $\delta$. Then the $W$-conjugacy classes of Borel subalgebras ${\mf b}$ of $\G$, or the Borel subalgebras ${\mf b}$ of $\G$ with ${\mf b}_\even=\left({\mf b}^{\tt st}\right)_\even$, are in one-to-one correspondence with such signed $\ep\delta$-sequences \cite[Section 1.3.4]{CW}. Let $s({\mf b})=-1$ if there exists a $-$ in the $\ep\delta$-sequence corresponding to ${\mf b}$, and $s({\mf b})=1$, if there is no $-$ in the sequence. In a way similar to the case of $\osp(2m+1|2n)$, the correspondence can be described as follows: Suppose that we are given such a signed $\ep\delta$-sequence. In the cases when the sequence ends with an $\ep$ or when the sequence ends with a $\delta$ and $s(\mf b)=1$, it gives a fundamental system $\underline{\Pi}$ for ${\mf l}$. In the case when the sequence ends with a $\delta$ and $s(\mf b)=-1$, we note that, ignoring the minus sign, it still gives a fundamental system $\underline{\Pi}'$ for ${\mf l}$. Now, we replace $\ep_m$ with $-\ep_m$ in $\underline{\Pi}'$ and denote this new subset of roots of $\G$ by $\underline{\Pi}$. Then the corresponding fundamental system $\Pi$ for $\G$ is given by
\begin{equation*}
\Pi=
\begin{cases}
\underline{\Pi}\cup \{\ep_{m-1}+\ep_{m}\}, & \text{if the $\ep\delta$-sequence ends with $\ep\ep$},\\
\underline{\Pi}\cup \{\delta_n+\ep_m\}, & \text{if the $\ep\delta$-sequence ends with $\delta\ep$},\\
\underline{\Pi}\cup \{2\delta_n\}, & \text{if the $\ep\delta$-sequence ends with $\delta$},
\end{cases}
\end{equation*}
so that the associated Dynkin diagrams are

\begin{equation*}
\hskip -1cm\setlength{\unitlength}{0.16in}
\begin{picture}(8,3)
\put(1.45,0.5){\makebox(0,0)[c]{$\cdots$}}
\put(3.8,0.5){\makebox(0,0)[c]{$\bigodot$}}
\put(5.6,2.2){\makebox(0,0)[c]{$\bigcirc$}}
\put(5.6,-1.2){\makebox(0,0)[c]{$\bigcirc$}}
\put(2.2,0.5){\line(1,0){1.1}}
\put(4.2,0.8){\line(1,1){1.1}}
\put(4.2,0.2){\line(1,-1){1.1}}
\put(5.6,-2.2){\makebox(0,0)[c]{\tiny $\ep_{m-1}+\ep_{m}$}}
\put(5.6,3.2){\makebox(0,0)[c]{\tiny $\ep_{m-1}-\ep_{m}$}}
\end{picture}\ \ \ \ \
\begin{picture}(8,3)
\put(1.45,0.5){\makebox(0,0)[c]{$\cdots$}}
\put(3.8,0.5){\makebox(0,0)[c]{$\bigodot$}}
\put(5.6,2.2){\makebox(0,0)[c]{$\bigotimes$}}
\put(5.6,-1.2){\makebox(0,0)[c]{$\bigotimes$}}
\put(2.2,0.5){\line(1,0){1.1}}
\put(4.2,0.8){\line(1,1){1.1}}
\put(4.2,0.2){\line(1,-1){1.1}}
\put(5.55,1.7){\line(0,-1){2.5}}
\put(5.6,-2.2){\makebox(0,0)[c]{\tiny $\delta_{n}+\ep_{m}$}}
\put(5.6,3.2){\makebox(0,0)[c]{\tiny $\delta_{n}-\ep_{m}$}}
\end{picture}\ \ \ \ \
\begin{picture}(8,3)
\put(1.45,0.5){\makebox(0,0)[c]{$\cdots$}}
\put(3.8,0.5){\makebox(0,0)[c]{$\bigodot$}}
\put(6.2,0.5){\makebox(0,0)[c]{$\bigcirc$}}
\put(2.2,0.5){\line(1,0){1.1}}
\put(4.2,0.3){$\Longleftarrow$}
\put(6.4,-0.5){\makebox(0,0)[c]{\tiny $2\delta_{n}$}}
\end{picture}
\end{equation*}
\vskip 1cm
\noindent respectively, where $\bigodot$ is either $\bigcirc$ or $\bigotimes$ depending on $\underline{\Pi}$ or the $\ep\delta$-sequence.

Note that the sequence $\underbrace{\delta\ldots\delta}_n\underbrace{\ep\ldots\ep}_m$  gives rise to the diagram \eqref{ABC:diagram:D1} and hence corresponds to ${\mf b}^{\tt st}$, while the sequence $\underbrace{\ep\ldots\ep}_m\underbrace{\delta\ldots\delta}_n$ gives \vskip 5mm
\begin{center}
\vskip -0.3cm
\begin{equation*}\label{ABC:diagram:D5}
\hskip -1cm \setlength{\unitlength}{0.16in}
\begin{picture}(24,1)
\put(5.7,0.5){\makebox(0,0)[c]{$\bigcirc$}}
\put(8.2,0.5){\makebox(0,0)[c]{$\cdots$}}
\put(10.4,0.5){\makebox(0,0)[c]{$\bigcirc$}}
\put(12.6,0.45){\makebox(0,0)[c]{$\bigotimes$}}
\put(14.85,0.5){\makebox(0,0)[c]{$\bigcirc$}}
\put(17.15,0.5){\makebox(0,0)[c]{$\cdots$}}
\put(19.3,0.5){\makebox(0,0)[c]{$\bigcirc$}}
\put(21.7,0.5){\makebox(0,0)[c]{$\bigcirc$}}
\put(6.1,0.5){\line(1,0){1.4}} \put(8.8,0.5){\line(1,0){1.2}}
\put(10.8,0.5){\line(1,0){1.2}} \put(13.1,0.5){\line(1,0){1.2}}
\put(15.28,0.5){\line(1,0){1}} \put(17.7,0.5){\line(1,0){1.1}}
\put(19.7,0.25){$\Longleftarrow$}
\put(5.7,-0.5){\makebox(0,0)[c]{\tiny $\ep_1-\ep_{2}$}}
\put(10.2,1.5){\makebox(0,0)[c]{\tiny $\ep_{m-1}-\ep_{m}$}}
\put(12.5,-0.5){\makebox(0,0)[c]{\tiny $\ep_m-\delta_1$}}
\put(15,-0.5){\makebox(0,0)[c]{\tiny $\delta_1-\delta_2$}}
\put(19,-0.5){\makebox(0,0)[c]{\tiny $\delta_{n-1}-\delta_n$}}
\put(21.75,-0.5){\makebox(0,0)[c]{\tiny $2\delta_{n}$}}
\end{picture}
\end{equation*}
\vskip 0.5cm
\end{center}
and $\underbrace{\ep\ldots\ep(-\ep)}_m\underbrace{\delta\ldots\delta}_n$\ gives \vskip 5mm
\begin{center}
\vskip -0.3cm
\begin{equation*}\label{ABC:diagram:D6}
\hskip -1cm \setlength{\unitlength}{0.16in}
\begin{picture}(24,1)
\put(5.7,0.5){\makebox(0,0)[c]{$\bigcirc$}}
\put(8.2,0.5){\makebox(0,0)[c]{$\cdots$}}
\put(10.4,0.5){\makebox(0,0)[c]{$\bigcirc$}}
\put(12.6,0.45){\makebox(0,0)[c]{$\bigotimes$}}
\put(14.85,0.5){\makebox(0,0)[c]{$\bigcirc$}}
\put(17.15,0.5){\makebox(0,0)[c]{$\cdots$}}
\put(19.3,0.5){\makebox(0,0)[c]{$\bigcirc$}}
\put(21.7,0.5){\makebox(0,0)[c]{$\bigcirc$}}
\put(6.1,0.5){\line(1,0){1.4}} \put(8.8,0.5){\line(1,0){1.2}}
\put(10.8,0.5){\line(1,0){1.2}} \put(13.1,0.5){\line(1,0){1.2}}
\put(15.28,0.5){\line(1,0){1}} \put(17.7,0.5){\line(1,0){1.1}}
\put(19.7,0.25){$\Longleftarrow$}
\put(5.7,-0.5){\makebox(0,0)[c]{\tiny $\ep_1-\ep_{2}$}}
\put(10.2,1.5){\makebox(0,0)[c]{\tiny $\ep_{m-1}+\ep_{m}$}}
\put(12.3,-0.5){\makebox(0,0)[c]{\tiny $-\ep_m-\delta_1$}}
\put(15.2,-0.5){\makebox(0,0)[c]{\tiny $\delta_1-\delta_2$}}
\put(19,-0.5){\makebox(0,0)[c]{\tiny $\delta_{n-1}-\delta_n$}}
\put(21.75,-0.5){\makebox(0,0)[c]{\tiny $2\delta_{n}$}}
\end{picture}
\end{equation*}
\vskip 0.5cm
\end{center}

It is known by \cite{K1} (see also \cite[Theorem 2.14]{CW}) that { given $\gamma\in \h^*$, $\gamma$ is a ${\mf b}^{\tt st}$-highest weight of a finite-dimensional irreducible integer weight $\G$-module $V$ if and only if $\gamma=\la^\natural$ or $\la_-^\natural$ for some $\la\in\mc{H}(n|m)$}, that is, $V\cong  L_{\mf b^{\texttt{st}}}(\G,\la^\natural)=L(\la^\natural)$ or $V\cong  L_{\mf b^{\texttt{st}}}(\G,\la_-^\natural)=L(\la_-^\natural)$. Here we also regard $\la^\natural, \la^\natural_-$ in \eqref{lambda:natural} as weights for $\G$.

Let $\sigma$ be the outer automorphism of $\G$ induced by the diagram automorphism that interchanges $\ep_{m-1}-\ep_m$ and $\ep_{m-1}+\ep_m$ in \eqref{ABC:diagram:D1}.
The diagram symmetry induces a linear map on $\h^\ast$ preserving the standard bilinear form, which we still denote by $\sigma$, such that $\sigma(\delta_k)=\delta_k$ for $1\leq k\leq n$, $\sigma(\ep_l)=\ep_l$ for $1\leq l\leq m-1$, and $\sigma(\ep_m)=-\ep_m$. It also acts in a natural way on the characters of $\G$-modules in the following sense. For a $\G$-module $V$, let us denote by $V^\sigma$ the $\G$-module whose underlying space is $V$ with the action of $\G$ twisted by $\sigma$. Then we have ${\rm ch}V^{\sigma}=\sigma({\rm ch}V)$.

Let $\mf b$ be a Borel subalgebra of $\G$ with $\mf b_{\bar 0}=(\mf b^{\texttt{st}})_\even$.
If the associated $\ep\delta$-sequence ends with an $\ep$, then we have $\sigma({\mf b})={\mf b}$.
If the associated $\ep\delta$-sequence ends with a $\delta$, then $\sigma({\mf b})$ is a Borel subalgebra different from ${\mf b}$, whose $\ep\delta$-sequence is obtained from that of ${\mf b}$ by changing the sign on the right-most $\ep$. One can check that $L_{\mf b}(\G,\gamma)^\sigma\cong L_{\sigma({\mf b})}(\G,\sigma(\gamma))$. In particular, we have for $\la\in\mc{H}(n|m)$,
\begin{equation}\label{eq:sigma twist}
L (\la_-^\natural)\cong L (\la^\natural)^\sigma.
\end{equation}

For $\la\in\mc{H}(n|m)$, let $\la^{{\mf b}}$ and $\la^{{\mf b}}_-$ denote the ${\mf b}$-highest weight of $L(\la^\natural)$ and $L(\la_-^\natural)$, respectively. By
\cite{CLW} and \cite[Theorem 2.62]{CW}, we have
\begin{equation}\label{eq:aux001}
\begin{cases}
\la^{{\mf b}}=\la^{\underline{\mf b}},\   \  \la_-^{{\mf b}}=\la^{\underline{\mf b}}_-, & \text{if the $\ep\delta$-sequence ends with $\ep$,}\\
\la^{{\mf b}}=\la^{\underline{\mf b}}, & \text{if the $\ep\delta$-sequence ends with $\delta$ and $s(\mf b)=1$},\\
{\la_-^{{\mf b}}=\la_-^{\underline{\mf b}'}}, & \text{if the $\ep\delta$-sequence ends with $\delta$ and $s(\mf b)=-1$,}\\
\end{cases}
\end{equation}
where $\la^{\underline{\mf b}}$ and $\la_-^{\underline{\mf b}}$ are given in \eqref{frob:ht:wt} with $\underline{\mf b}={\mf b}\cap {\mf l}$ for $s({\mf b})=1$, and ${\mf b}'=\sigma({\mf b})$ for $s({\mf b})=-1$. Note that no general formula for $\la^{\mf b}$ (respectively, $\la_-^{\mf b}$) is known when the $\ep\delta$-sequence ends with $\delta$ with $s(\mf b)=-1$ (respectively, $s(\mf b)=1$).

\subsection{Zuckermann functor and Gruson-Serganova's ``typical lemma''}
Suppose that $\G$ is either $\gl(n|m)$ or $\osp(\ell|2n)$ with a Borel subalgebra ${\mf b}$.
Let $\mf p$ be  a parabolic subalgebra with Levi subalgebra $\mf{l}$ and nilradical $\mf{u}$. Let $\mf{u}^-$ be the opposite nilradical so that we have $\G=\mf{p}+\mf{u}^-$. {Let $\Phi^+_{{\mf b}}({\mf l}_\kappa)$ be the set of roots of ${\mf l}_\kappa$ in $\Phi^+_{{\mf b}}$ and $\Phi^+_{{\mf b}}({\mf u}_\kappa)=\Phi^+_{{\mf b},\kappa}\setminus \Phi^+_{{\mf b}}({\mf l}_\kappa)$ for $\kappa=\ov{0}, \ov{1}$.}

Let $\mc{HC}(\G,\mf{l}_\even)$ be the category of $\G$-modules that are direct sums of finite-dimensional simple $\mf{l}_\even$-modules, and let $\mc{HC}(\G,\mf{\G}_{\bar 0})$ be defined similarly.
Let $\mc L_0:\mc{HC}(\G,\mf{l}_\even)\rightarrow \mc{HC}(\G,\G_{\bar 0})$ be the Zuckermann functor defined in \cite[(12)]{San}, which is the same as in \cite[Section 3]{Ser}. We denote by $\mc L_i$ the $i$th derived functor of $\mc L_0$. By the classical Borel-Weil-Bott theorem, it follows that $\mc L_i(M)=0$, for $i\gg 0$, for any $M\in\mc{HC}(\G,\mf l_\even)$.

Let $L(\mf{l},\gamma)$ be a finite-dimensional irreducible $\mf{l}$-module with $\underline{\mf b}(=\mf b\cap\mf l)$-highest weight $\gamma\in\h^*$, which is extended to an irreducible $\mf p$-module in a trivial way.  Let ${\rm Ind}_{\mf p}^{\G}L(\mf{l},\gamma)$ be the induced module, which is a parabolic Verma module over $\G$. The following proposition is well-known.

\begin{prop}\label{prop:same:central}\cite[Section 1.3]{Ger} Let $L(\mf{l},\gamma)$ be a finite-dimensional irreducible $\mf{l}$-module so that ${\rm Ind}_{\mf p}^{\G}L(\mf{l},\gamma)\in\mc{HC}(\G,\mf{l}_\even)$.
\begin{itemize}
\item[(1)] The $\G$-module $\mc L_0\left({\rm Ind}_{\mf p}^{\G}L(\mf{l},\gamma)\right)$ is the maximal finite-dimensional quotient of ${\rm Ind}_{\mf p}^{\G}L(\mf{l},\gamma)$.
\item[(2)] The $\G$-module $\mc L_i\left({\rm Ind}_{\mf p}^{\G}L(\mf{l},\gamma)\right)$ is finite-dimensional for all $i\geq 0$.
    \item[(3)] Let $I$ be the annihilator of ${\rm Ind}_{\mf p}^{\G}L(\mf{l},\gamma)$ in $U(\G)$. Then $I$ annihilates every $\mc{L}_i\left({\rm Ind}_{\mf p}^{\G}L(\mf{l},\gamma)\right)$ for all $i\ge 0$. In particular, all the $\G$-modules $\mc L_i\left({\rm Ind}_{\mf p}^{\G}L(\mf{l},\gamma)\right)$ have the same central character.
\end{itemize}
\end{prop}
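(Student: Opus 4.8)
The plan is to deduce all three assertions from an elementary finiteness lemma, the universal property of the Zuckermann functor, classical Borel--Weil--Bott theory on $\G_\even$, and the $Z(\G)$-equivariance of derived functors. Throughout put $M:={\rm Ind}_{\mf p}^\G L(\mf l,\gamma)$; by PBW $M\cong U(\mf u^-)\otimes L(\mf l,\gamma)$ as a vector space, $M$ lies in $\mc{HC}(\G,\mf l_\even)$ by hypothesis, and $M$ is generated over $\G$ by the finite-dimensional subspace $1\otimes L(\mf l,\gamma)$. The key elementary point is the \emph{finiteness lemma}: every finitely generated $\G$-module belonging to $\mc{HC}(\G,\G_\even)$ is finite-dimensional. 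Indeed such a module is generated by a finite-dimensional subspace $W_0$, which by $\G_\even$-integrability is contained in a finite-dimensional $\G_\even$-submodule $W$; since $U(\G)=\Lambda(\G_\odd)\,U(\G_\even)$ by PBW, $U(\G)W_0\subseteq\Lambda(\G_\odd)\,U(\G_\even)W=\Lambda(\G_\odd)W$, which is finite-dimensional. For \emph{(1)}, recall (see \cite[(12)]{San}, \cite[Section~3]{Ser}) that $\mc L_0(N)$ is the maximal quotient of $N$ lying in $\mc{HC}(\G,\G_\even)$. Every quotient of $M$ is finitely generated over $\G$, so by the lemma every quotient of $M$ lying in $\mc{HC}(\G,\G_\even)$ is finite-dimensional; conversely every finite-dimensional quotient of $M$ manifestly lies there. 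Hence $\mc L_0(M)$ is the maximal finite-dimensional quotient of $M$.

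For \emph{(2)}, note that $\mc L_0$, being left adjoint to the inclusion of $\mc{HC}(\G,\G_\even)$, is right exact and sends a finitely generated module to a finitely generated module (indeed to a quotient of it). Resolving $M$ by finitely generated objects of $\mc{HC}(\G,\mf l_\even)$ therefore exhibits each $\mc L_i(M)$ as a subquotient of a finitely generated $\G$-module; as $U(\G)$ is noetherian, $\mc L_i(M)$ is itself finitely generated over $\G$, and since it lies in $\mc{HC}(\G,\G_\even)$ by construction, the finiteness lemma forces it to be finite-dimensional. This dovetails with the remark preceding the proposition: upon restriction to $\G_\even$, $M$ becomes a parabolic Verma module in category $\mathcal{O}$ for the reductive Lie algebra $\G_\even$, and the classical Zuckermann cohomology of such a module is finite-dimensional and vanishes for $i\gg0$ by Borel--Weil--Bott.

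For \emph{(3)}, each $z\in Z(\G)$ acts on objects of $\mc{HC}(\G,\mf l_\even)$ through a natural endomorphism of the identity functor, hence through a natural endomorphism of every derived functor $\mc L_i$ compatible with its action on $M$. As $M$ is a parabolic Verma module it carries a central character $\chi$, the scalar by which $Z(\G)$ acts on its highest weight vector, so $\ker\chi$ annihilates $M$ and therefore annihilates every $\mc L_i(M)$; in particular all the $\mc L_i(M)$ have the common central character $\chi$. For the full annihilator $I$, the case $i=0$ is immediate since $\mc L_0(M)$ is a quotient of $M$; for $i\ge1$ one argues, following \cite[Section~1.3]{Ger}, that the functors $\mc L_i$ restrict to, and can be computed within, the category of $U(\G)/I$-modules, so that each $\mc L_i(M)$ is automatically a $U(\G)/I$-module.

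The point I expect to be the real obstacle is this last one: passing from ``same central character'' to ``killed by the full two-sided ideal $I$'' for $i\ge1$. A general resolution of $M$ in $\mc{HC}(\G,\mf l_\even)$ need not consist of $U(\G)/I$-modules, so one must either produce an $\mc L_0$-acyclic resolution of $M$ already inside the category of $U(\G)/I$-modules, or compute $\mc L_\bullet(M)$ through a specific finite complex whose cohomology — though not its terms — is stable under $I$. Making the homological framework of \cite{San,Ser} precise enough to yield the finite generation of the $\mc L_i(M)$ used in step (2) is the other place where the cited references carry the weight; the remaining steps are formal.
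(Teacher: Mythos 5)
The paper offers no proof of this proposition at all --- it is quoted as well known and attributed to \cite[Section 1.3]{Ger} --- so there is no internal argument to measure yours against; I can only judge the proposal on its own terms. Parts (1) and (2) are essentially sound. The finiteness lemma (finitely generated plus $\G_\even$-locally finite implies finite-dimensional, via $U(\G)=\Lambda(\G_\odd)U(\G_\even)$) is correct, and combined with the characterization of $\mc L_0$ as the reflection onto $\mc{HC}(\G,\G_\even)$ it does give (1). For (2) you should justify the one step you use silently: that $M$ admits a resolution by \emph{finitely generated} projectives in $\mc{HC}(\G,\mf l_\even)$. This is true --- $U(\G)\otimes_{U(\mf l_\even)}E$ with $E$ finite-dimensional is projective because $\mathrm{Hom}_{\mf l_\even}(E,-)$ is exact on the $\mf l_\even$-semisimple category, and noetherianity of $U(\G)$ lets one iterate --- but it is the load-bearing point of that argument. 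The alternative you mention only as a "dovetail", namely that upon restriction to $\G_\even$ the module $M$ is parabolically induced from the finite-dimensional $\mf l_\even$-module $\Lambda(\mf u^-_\odd)\otimes L(\mf l,\gamma)$ so that classical Borel--Weil--Bott gives finite-dimensionality and vanishing for $i\gg 0$ directly, is in fact the mechanism the paper itself relies on elsewhere (for Proposition \ref{form:Euler} and the vanishing $\mc L_i=0$ for $i\gg 0$); it would be cleaner to make that the primary argument.

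Part (3) contains a genuine gap, which to your credit you identify yourself. The central-character consequence is correctly obtained from the naturality of the $Z(\G)$-action on the identity functor. But the actual assertion --- that the full two-sided ideal $I=\mathrm{Ann}_{U(\G)}(M)$ kills every $\mc L_i(M)$ --- is not proved: the sentence "the functors $\mc L_i$ restrict to, and can be computed within, the category of $U(\G)/I$-modules" is exactly the statement requiring proof, since for non-central $u\in I$ the map $n\mapsto un$ is not a $\G$-module endomorphism and naturality gives nothing. The missing ingredient is the commutation of the derived Zuckermann functors with the forgetful functor along the surjection $U(\G)\twoheadrightarrow U(\G)/I$ (in the style of Knapp--Vogan, or \cite{San}); one must either show that a projective resolution of $M$ taken inside the locally $\mf l_\even$-finite $U(\G)/I$-modules remains $\mc L_0$-acyclic over $U(\G)$, or compute both derived functors from the same standard complex. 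You have located the obstacle precisely but not removed it, so as written the proposal establishes (1), (2), and the "in particular" clause of (3) --- which is all the paper actually uses downstream --- but not the full statement of (3).
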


For $V\in\mc{HC}(\G,\mf{l})$, we define the {\em Euler characteristic} of $V$ to be the virtual $\G$-module
\begin{align*}
\mc E(V):=\sum_{i=0}^\infty(-1)^i\mc{L}_i(V).
\end{align*}

The following character formula for the Euler characteristic of the parabolic Verma module is known. Indeed, it follows readily from the classical Borel-Weil-Bott theorem and the Weyl character formula for semisimple Lie algebras.

\begin{prop}\label{form:Euler}\cite[Lemma 3.2]{Ser}
Let $M$ be a finite-dimensional $\mf l$-module so that ${\rm Ind}_{\mf p}^{\G}M\in\mc{HC}(\G,\mf{l})$. We have
\begin{align*}
{\rm ch}\mc E({\rm Ind}_{\mf p}^{\G}M)=D^{-1}_{\mf b}\sum_{w\in W}(-1)^{\ell(w)}w\left( \frac{e^{\rho^{\mf b}}{\rm ch}M}{\prod_{\alpha\in\Phi^+_{{\mf b}}(\mf{l_{\bar 1}})}(1+e^{-\alpha})} \right).
\end{align*}
\end{prop}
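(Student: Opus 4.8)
The plan is to deduce the formula by combining the classical Borel--Weil--Bott theorem with the Weyl character formula for $\G_{\bar 0}$, after reducing the computation of $\mathrm{ch}\,\mc E(\mathrm{Ind}_{\mf p}^{\G}M)$ from the super setting to the reductive Lie algebra $\G_{\bar 0}$. First I would observe that, as a $\G_{\bar 0}$-module, $\mathrm{Ind}_{\mf p}^{\G}M \cong U(\mf u^-_{\bar 1})\otimes \mathrm{Ind}_{\mf p_{\bar 0}}^{\G_{\bar 0}}M$, where $\mf p_{\bar 0}=\mf p\cap\G_{\bar 0}$ is a parabolic subalgebra of $\G_{\bar 0}$ with Levi $\mf l_{\bar 0}$; since $\mf u^-_{\bar 1}$ is an abelian purely-odd space, $U(\mf u^-_{\bar 1})=\Lambda(\mf u^-_{\bar 1})$ is finite dimensional with character $\prod_{\alpha\in\Phi^+_{\mf b}(\mf u_{\bar 1})}(1+e^{-\alpha})$. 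Because the Zuckermann functor $\mc L_0$ and its derived functors only depend on the $\G_{\bar 0}$-module structure (they are relative to $\G_{\bar 0}$, and the full $\G$-action on the cohomology, which we do not need here, is recovered afterwards), the virtual module $\mc E(\mathrm{Ind}_{\mf p}^{\G}M)$ has the same character as the alternating sum of the $\G_{\bar 0}$-cohomology of $\mathrm{Ind}_{\mf p_{\bar 0}}^{\G_{\bar 0}}\!\big(M\otimes\Lambda(\mf u^-_{\bar 1})\big)$.

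Next I would invoke Borel--Weil--Bott: for a finite-dimensional $\mf l_{\bar 0}$-module $N$ that is completely reducible, the Euler characteristic of the Zuckermann cohomology of $\mathrm{Ind}_{\mf p_{\bar 0}}^{\G_{\bar 0}}N$ has character equal to $D_{\mf b,\bar 0}^{-1}\sum_{w\in W}(-1)^{\ell(w)} w\!\left(e^{\rho^{\mf b}_{\bar 0}}\,\mathrm{ch}\,N / \prod_{\alpha\in\Phi^+_{\mf b}(\mf u_{\bar 0})}(1+\cdots)\right)$ — more precisely the standard statement is that the alternating sum over cohomological degree collapses the Weyl-denominator of $\mf l_{\bar 0}$ and produces the full Weyl denominator $D_{\mf b,\bar 0}$. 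The cleanest way to package this is to note that for each irreducible constituent $L(\mf l_{\bar 0},\mu)$ of $N$, the Euler characteristic is $(-1)^{?}$ times an irreducible character of $\G_{\bar 0}$ given by the Weyl character formula when $\mu+\rho^{\mf b}_{\bar 0}$ is regular and $0$ otherwise, and the combined expression is exactly $D^{-1}_{\mf b,\bar 0}\sum_{w}(-1)^{\ell(w)}w\big(e^{\rho^{\mf b}_{\bar 0}}\,\mathrm{ch}\,N\big)$ once one accounts for the $\mf l_{\bar 0}$-Weyl-denominator already implicit in $\mathrm{ch}\,N$ via the Weyl character formula on the Levi; this is the content of \cite[Lemma 3.2]{Ser} applied within $\G_{\bar 0}$.

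Then I would assemble the two ingredients. Taking $N = M\otimes\Lambda(\mf u^-_{\bar 1})$ and using multiplicativity of characters, $\mathrm{ch}\,N = \mathrm{ch}\,M\cdot\prod_{\alpha\in\Phi^+_{\mf b}(\mf u_{\bar 1})}(1+e^{-\alpha})$. Plugging into the $\G_{\bar 0}$-formula gives
\begin{align*}
\mathrm{ch}\,\mc E(\mathrm{Ind}_{\mf p}^{\G}M)
= D^{-1}_{\mf b,\bar 0}\sum_{w\in W}(-1)^{\ell(w)}\,w\!\left( e^{\rho^{\mf b}_{\bar 0}}\,\mathrm{ch}\,M\cdot\!\!\prod_{\alpha\in\Phi^+_{\mf b}(\mf u_{\bar 1})}\!\!(1+e^{-\alpha}) \right).
\end{align*}
It remains to rewrite the right-hand side in the stated form. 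Here one uses that $\prod_{\beta\in\Phi^+_{\mf b,\bar 1}}(1+e^{-\beta}) = \prod_{\beta\in\Phi^+_{\mf b}(\mf l_{\bar 1})}(1+e^{-\beta})\cdot\prod_{\alpha\in\Phi^+_{\mf b}(\mf u_{\bar 1})}(1+e^{-\alpha})$, that $e^{-\rho^{\mf b}_{\bar 1}}\prod_{\beta\in\Phi^+_{\mf b,\bar 1}}(1+e^{-\beta}) = \prod_{\beta\in\Phi^+_{\mf b,\bar 1}}(e^{\beta/2}+e^{-\beta/2}) = D_{\mf b,\bar 1}$, and that $W$ permutes $\Phi^+_{\mf b,\bar 1}$ modulo sign changes that are absorbed by the other factors, so that $D_{\mf b,\bar 1}$ can be pulled outside the sum. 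Absorbing $e^{\rho^{\mf b}_{\bar 0}}/$(the odd $\rho$-shift) into $e^{\rho^{\mf b}}=e^{\rho^{\mf b}_{\bar 0}-\rho^{\mf b}_{\bar 1}}$ and combining $D_{\mf b,\bar 0}^{-1}D_{\mf b,\bar 1} = D_{\mf b}^{-1}$ yields precisely
\begin{align*}
\mathrm{ch}\,\mc E(\mathrm{Ind}_{\mf p}^{\G}M) = D^{-1}_{\mf b}\sum_{w\in W}(-1)^{\ell(w)}w\!\left( \frac{e^{\rho^{\mf b}}\,\mathrm{ch}\,M}{\prod_{\alpha\in\Phi^+_{\mf b}(\mf l_{\bar 1})}(1+e^{-\alpha})} \right).
\end{align*}

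I expect the main obstacle to be the bookkeeping in the last paragraph: carefully tracking how the odd $\rho$-shift interacts with the $W$-action (so that the non-$W$-invariant factor $\prod_{\Phi^+_{\mf b}(\mf l_{\bar 1})}(1+e^{-\alpha})$ stays inside the $w(\cdot)$ while $D_{\mf b,\bar 1}$, which is genuinely $W$-invariant up to sign, comes out), and confirming that the alternating-sum collapse in Borel--Weil--Bott produces exactly $D_{\mf b,\bar 0}^{-1}\sum_w(-1)^{\ell(w)}w(\cdots)$ with the right relationship to the Weyl character formula on the Levi. None of this is conceptually deep, which is why the statement says the formula "follows readily" — the honest proof is really just a careful identification of terms, and I would likely cite \cite[Lemma 3.2]{Ser} for the $\G_{\bar 0}$-step rather than reprove Borel--Weil--Bott.
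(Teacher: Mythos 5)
Your proposal is correct and is essentially the argument the paper intends: the paper offers no proof beyond citing \cite[Lemma 3.2]{Ser} and the remark that the formula follows from Borel--Weil--Bott together with the Weyl character formula for $\G_{\bar 0}$, and your intermediate identity is precisely the paper's Remark \ref{form:Euler:1}, from which the stated form follows by the $W$-invariance of $D_{\mf b,\bar 1}$ exactly as you indicate. One repair is needed in the first step: $\mf u^-_{\bar 1}$ is in general \emph{not} a subalgebra (for $\osp$ one has $[\mf u^-_{\bar 1},\mf u^-_{\bar 1}]\subseteq \mf u^-_{\bar 0}$ typically nonzero, e.g.\ odd root vectors for $-\delta_i+\ep_j$ and $-\delta_i-\ep_j$ bracket to a multiple of the root vector for $-2\delta_i$), so the phrase ``$U(\mf u^-_{\bar 1})=\Lambda(\mf u^-_{\bar 1})$'' is not literally meaningful. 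The correct statement is that, by PBW, ${\rm Ind}_{\mf p}^{\G}M$ carries a $\G_{\bar 0}$-stable filtration by odd degree whose associated graded is $\bigoplus_j{\rm Ind}_{\mf p_{\bar 0}}^{\G_{\bar 0}}\bigl(\Lambda^j(\mf u^-_{\bar 1})\otimes M\bigr)$; since the $\mc L_i$ vanish for $i\gg 0$ and their Euler characteristic is additive on short exact sequences (and commutes with restriction to $\G_{\bar 0}$, as you note), the character of $\mc E$ is insensitive to passing to this associated graded, and the rest of your computation goes through unchanged.
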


\begin{rem}\label{form:Euler:1}
Equivalently, by the $W$-invariance of $D^{\mf b}_\odd$, we have
\begin{align*}
{\rm ch}\mc E({\rm Ind}_{\mf p}^{\G}M)=\frac{1}{D_{\mf b,\even}}\sum_{w\in W}(-1)^{\ell(w)}w\left( {e^{\rho_\even^{\mf b}}{\rm ch}M}{\prod_{\alpha\in\Phi^+_{\mf b}(\mf{u_{\bar 1}})}(1+e^{-\alpha})} \right).
\end{align*}
It follows that for two parabolic subalgebras $\mf{p}$ and $\mf q$ with the same even Borel and Levi subalgebras and the same odd nilradical, the Euler characteristics of the parabolic Verma modules coincide, i.e., we have for an $\mf l$-module $M$ $${\rm ch}\mc E({\rm Ind}_{\mf p}^{\G}M)={\rm ch}\mc E({\rm Ind}_{\mf q}^{\G}M).$$
\end{rem}

Let $\gamma\in\h^\ast$ be a ${\mf b}$-highest weight of a finite-dimensional irreducible $\G$-module and let $\chi_{\gamma}$ denote its central character. Following \cite{GS}, we say that $\mf p$ is {\em admissible} for $\chi_{\gamma}$, if for any ${\mf b}$-highest weight $\gamma'$ of a finite-dimensional irreducible $\G$-module  satisfying $\chi_{\gamma}=\chi_{\gamma'}$, we have $(\gamma'+\rho^\mf b,\frac{2\beta}{(\beta,\beta)})\ge 0$ for all $\beta\in\Phi^+_{\mf b}(\mf u_\even)$. The following is Gruson-Serganova's ``typical lemma'', and it plays a fundamental role in their algorithm for computing finite-dimensional irreducible characters over the ortho-symplectic Lie superalgebras \cite{GS}.

\begin{lem}\cite[Lemma 5]{GS}\label{lem:GS}
Let $\mf b$ be a Borel subalgebra and let $\gamma\in\h^\ast$ be a ${\mf b}$-highest weight of a finite-dimensional irreducible $\G$-module. Suppose that
\begin{itemize}
\item[(i)] a parabolic subalgebra $\mf p=\mf l+\mf u$ is admissible for $\chi_{\gamma}$,

\item[(ii)] the Levi subalgebra  $\mf l$ contains a maximal mutually orthogonal set of isotropic odd simple roots orthogonal to $\gamma+\rho^\mf b$ of cardinality equal to the degree of atypicality of $\gamma$,

\item[(iii)] $\left(\gamma+\rho^\mf b,\frac{2\beta}{(\beta,\beta)}\right)> 0$ for all $\beta\in\Phi^+_{\mf b}(\mf u_\even)$.
\end{itemize}
 Then we have
\begin{align*}
\mc L_i\left({\rm Ind}_{\mf p}^\G L(\mf l,\gamma)\right)=
\begin{cases}
L(\G,\gamma),&\text{ if }i=0,\\
0,&\text{ if } i>0.
\end{cases}
\end{align*}
In particular, we have ${\rm ch}\mc{E}{\left({\rm Ind}_{\mf p}^\G L(\mf l,\gamma)\right)}={\rm ch}{L(\G,\gamma)}$.
\end{lem}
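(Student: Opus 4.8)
The plan is to establish the module statement --- $\mc{L}_i(M)=L(\G,\gamma)$ for $i=0$ and $\mc{L}_i(M)=0$ for $i>0$, where $M:={\rm Ind}_{\mf p}^{\G}L(\mf l,\gamma)$ --- and to deduce the statement on ${\rm ch}\,\mc E(M)$ by taking the alternating sum of the classes of the $\mc{L}_i(M)$. I would start with two reductions. First, \propref{prop:same:central}(2),(3) tells us that every $\mc{L}_i(M)$ is finite dimensional and that they all carry the central character $\chi_\gamma$ of $M$; as a finite-dimensional module equals the sum of its composition factors in the Grothendieck group, each $[\mc{L}_i(M)]$ is a $\Z_{\ge 0}$-combination of classes $[L(\G,\gamma')]$ with $\gamma'$ a $\mf b$-highest weight of a finite-dimensional irreducible satisfying $\chi_{\gamma'}=\chi_\gamma$ --- a finite set, each member of which moreover satisfies $\bigl(\gamma'+\rho^{\mf b},\tfrac{2\beta}{(\beta,\beta)}\bigr)\ge 0$ for all $\beta\in\Phi^+_{\mf b}(\mf u_\even)$ by the admissibility hypothesis~(i). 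Second, since $L(\mf l,\gamma)$ is the irreducible $\mf l$-head of $\gamma$, the parabolic Verma module $M$ has $L(\G,\gamma)$ as its unique simple quotient, and $L(\G,\gamma)$ is finite dimensional by hypothesis; so \propref{prop:same:central}(1) gives $\mc{L}_0(M)\neq 0$ together with a surjection $\mc{L}_0(M)\twoheadrightarrow L(\G,\gamma)$.

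The heart of the matter is the vanishing $\mc{L}_i(M)=0$ for $i>0$, and here I would reduce the super Zuckermann functor to classical Borel--Weil--Bott on the flag variety of $\G_\even$. Using the reduction of cohomological induction in the super setting \cite{San, Ser} together with the PBW factorization $U(\mf u^-)=U(\mf u^-_\even)\otimes\Lambda(\mf u^-_\odd)$ in the odd directions, $\mc{L}_\bullet(M)$ is assembled, via a spectral sequence, from the classical derived functors applied to $\G_\even$-parabolic Verma modules ${\rm Ind}_{\mf p_\even}^{\G_\even}L(\mf l_\even,\mu)$, where $\mu$ runs through the $\mf l_\even$-highest weights obtained from $\gamma$ by subtracting sums of distinct roots in $\Phi^+_{\mf b}(\mf u_\odd)$ and twisting by the $\mf l_\even$-constituents of $\Lambda(\mf u^-_\odd)$. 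Now condition~(iii) makes $\gamma+\rho^{\mf b}$ strictly dominant for $\Phi^+_{\mf b}(\mf u_\even)$, while admissibility~(i) forces every $\mu+\rho^{\mf b}_\even$ that could contribute any cohomology to lie in the closed dominant chamber --- at worst on a wall, never strictly anti-dominant. Classical Bott vanishing then kills all cohomology in positive degrees, and the spectral sequence propagates this to $\mc{L}_i(M)=0$ for $i>0$. In particular $\mc{L}_0(M)=\mc E(M)$.

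It remains to identify $\mc{L}_0(M)$. By \propref{form:Euler} and \remref{form:Euler:1} one has a closed formula for ${\rm ch}\,\mc E(M)$ as a $W$-alternating sum built from $e^{\rho^{\mf b}}\,{\rm ch}\,L(\mf l,\gamma)$ divided by $\prod_{\alpha\in\Phi^+_{\mf b}(\mf l_\odd)}(1+e^{-\alpha})$. Condition~(ii) is precisely what is needed to evaluate this: the maximal mutually orthogonal family $S$ of isotropic odd \emph{simple} roots lying inside $\mf l$, orthogonal to $\gamma+\rho^{\mf b}$ and of cardinality the degree of atypicality of $\gamma$, lets one expand ${\rm ch}\,L(\mf l,\gamma)$ and cancel the spurious $W_{\mf l}$-summands against the denominator $\prod_{\alpha\in\Phi^+_{\mf b}(\mf l_\odd)}(1+e^{-\alpha})$, using the relations $(\gamma+\rho^{\mf b},\beta)=0$ for $\beta\in S$ and the $W$-invariance of $D_{\mf b,\odd}$. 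Carrying this through should yield ${\rm ch}\,\mc E(M)={\rm ch}\,L(\G,\gamma)$; combined with $\mc{L}_0(M)=\mc E(M)$ and the surjection $\mc{L}_0(M)\twoheadrightarrow L(\G,\gamma)$, equality of characters forces $\mc{L}_0(M)=L(\G,\gamma)$, which finishes the proof and makes the concluding ``in particular'' immediate.

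The two genuinely delicate points, I expect, are the positive-degree vanishing and the cancellation in the Euler-characteristic formula. In the super world Borel--Weil--Bott is only a partial tool, and it is exactly the admissibility hypothesis~(i) that excludes ``unexpected'' higher cohomology coming from weights linked to $\gamma$ but distinct from it; organizing the odd-nilradical filtration of $M$ and bookkeeping which sections survive after the $W$-alternation is the main technical burden. For the identification step, the interplay between odd reflections, the factors $(1+e^{-\alpha})$, and the orthogonality $(\gamma+\rho^{\mf b},\beta)=0$ for $\beta\in S$ is the mechanism through which the ``type $A$'' atypicality --- concentrated, by~(ii), inside the Levi $\mf l$ --- gets handled, and making this manipulation rigorous is the real technical heart of the argument.
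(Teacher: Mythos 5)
First, a remark on the ground truth: the paper does not prove this lemma at all — it is imported verbatim from Gruson--Serganova \cite[Lemma 5]{GS} — so there is no in-paper proof to compare against. Measured against the actual argument in \cite{GS}, your skeleton (finiteness and constancy of the central character via \propref{prop:same:central}, the surjection $\mc L_0(M)\twoheadrightarrow L(\G,\gamma)$, vanishing in positive degrees, identification of $\mc L_0$) is right, but both of the steps you yourself flag as delicate contain genuine gaps.

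For the vanishing step: admissibility (i) constrains the shifted \emph{highest weights} $\gamma'+\rho^{\mf b}$ of finite-dimensional irreducibles in the block of $\chi_\gamma$; it says nothing about the weights $\mu=\gamma-\sum_j\beta_j$ arising from the odd-radical filtration of $M$. Those $\mu+\rho^{\mf b}_\even$ can be genuinely anti-dominant, so classical Bott vanishing does not kill the positive-degree terms of the associated graded: higher cohomology really is present at the first page of your spectral sequence, and the work is to show that whatever survives to $\mc L_i(M)$ for $i>0$ could only consist of composition factors $L(\G,\gamma')$ with $\chi_{\gamma'}=\chi_\gamma$ which are then \emph{excluded} by combining admissibility, the strict inequality (iii), and a weight bound saying that weights of $\mc L_i$ for $i>0$ lie strictly below $\gamma$. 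That exclusion argument is the actual content of \cite[Lemma 5]{GS} and is absent from your sketch.

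The identification step is circular as written. You propose to verify ${\rm ch}\,\mc E(M)={\rm ch}\,L(\G,\gamma)$ by direct manipulation using (ii); but for atypical $\gamma$ the character ${\rm ch}\,L(\G,\gamma)$ is not known in advance — computing it is the point of the whole paper. What the manipulation you describe actually produces is the Kac--Wakimoto expression (this is precisely \propref{prop:Euler=KW}), and the equality of that expression with ${\rm ch}\,L(\G,\gamma)$ is \thmref{thm:main result}, which is proved \emph{using} the present lemma. The correct logic runs the other way: one first establishes module-theoretically that $L(\G,\gamma)$ occurs with multiplicity one in the Euler characteristic, that no other simple module can occur in any $\mc L_i$, and that each $\mc L_i$ contributes with sign $(-1)^i$ and non-negative multiplicities; this forces $\mc L_i(M)=0$ for $i>0$ and $\mc L_0(M)=L(\G,\gamma)$, and only then does one read off the character identity as the ``in particular'' clause.
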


\section{Classification of tame modules and the Kac-Wakimoto Conjecture}\label{sec:tame:KW}

\subsection{Tame modules}

Let $\G$ be a finite-dimensional basic Lie superalgebra and let $\h$ be a fixed Cartan subalgebra. Suppose that $V$ is a finite-dimensional irreducible $\G$-module of degree of atypicality $k\ge 0$. Following \cite[Definition 3.5]{KW2}, we say that $V$ is {\em tame} if there exists a Borel subalgebra $\mf b$
such that
\begin{itemize}
\item[(T1)] $V=L_{\mf b}(\G,\gamma)$ for some ${\mf b}$-highest weight $\gamma\in \h^\ast$,
\item[(T2)] there exists a {\em distinguished} set $T_{\gamma}\subseteq\Pi_{\mf b}$ consisting of $k$ mutually orthogonal isotropic odd simple roots satisfying $(\gamma+\rho^{\mf b},\beta)=0$ for all $\beta\in T_{\gamma}$.
\end{itemize}
In this case, we shall also say that $V$ is tame with respect to $\mf b$ of highest weight $\gamma$.

It was shown  in \cite[Theorem 20]{CHR} that an irreducible $\gl(n|m)$-module is tame if and only if it is a Kostant module in the sense of \cite{BS}. The theorem below first appear in \cite[Corollary 2.4]{MJ}. Since irreducible polynomial modules are Kostant modules, it also follows directly from \cite[Theorem 20]{CHR}.

\begin{thm}\label{thm:CHR1}
For $\la\in\mc{H}(n|m)$, the polynomial $\gl(n|m)$-module $L(\gl(n|m),\la^\natural)$ is tame.
\end{thm}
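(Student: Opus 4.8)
The plan is to reduce the tameness of the polynomial module $L(\gl(n|m),\la^\natural)$ to a purely combinatorial statement about $\ep\delta$-sequences and Frobenius coordinates, and then exhibit an explicit Borel subalgebra witnessing conditions (T1) and (T2). First I would compute the degree of atypicality $k$ of $\la^\natural$: using the bilinear form \eqref{bilinear:form} and the formula for $\rho^{\underline{\mf b}^{\texttt{st}}}$, one checks that $(\la^\natural+\rho^{\underline{\mf b}^{\texttt{st}}},\delta_i-\ep_j)=0$ exactly for those pairs $(i,j)$ encoded by the ``hook'' overlap of $\la$, and the maximal number of mutually orthogonal such roots equals the number of boxes on the main diagonal of $\la$ that lie in the ``arm-leg corner'', i.e. $k=|\{\,i\le n : \la_i' \ge i \text{ after the } (n|m)\text{-shift}\,\}|$ expressed in Frobenius language. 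The key observation is that each atypical relation is of ``type $A$'', meaning it pairs a $\delta$ with an $\ep$, so all atypicality can be moved onto simple roots by choosing the $\ep\delta$-sequence that interleaves the relevant $\delta$'s and $\ep$'s adjacently.

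Next I would invoke the machinery of Section~\ref{subsec:gl:po}: for a suitable $\ep\delta$-sequence, the Borel $\underline{\mf b}$ it determines has the property that $L_{\underline{\mf b}^{\texttt{st}}}(\mf l,\la^\natural)\cong L_{\underline{\mf b}}(\mf l,\la^{\underline{\mf b}})$, with $\la^{\underline{\mf b}}$ given by the Frobenius coordinates in \eqref{frob:ht:wt}. I would choose the sequence so that the $k$ ``diagonal'' boxes of $\la$ force $k$ pairs of consecutive entries in the numbered $\ep\delta$-sequence of the form $\delta_{i}\ep_{j}$ (or $\ep_j\delta_i$) whose difference is a simple odd isotropic root $\beta = \delta_i - \ep_j$, and such that for each such $\beta$ one has $(\la^{\underline{\mf b}}+\rho^{\underline{\mf b}},\beta)=0$. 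Concretely, one takes the Frobenius coordinates relative to a block structure $\delta^{d_1}\ep^{e_1}\cdots$ in which each atypical ``hook corner'' sits at a $\delta\ep$ junction; the definition of $p_i,q_j$ together with the contribution of $\rho^{\underline{\mf b}}$ makes the pairing vanish because the truncations $\la_i-\mathtt e_u$ and $\la_j'-\mathtt d_{u+1}$ become equal at precisely those spots. This gives a distinguished set $T_\gamma \subseteq \Pi_{\underline{\mf b}}$ of $k$ mutually orthogonal isotropic odd simple roots with $(\gamma+\rho^{\underline{\mf b}},\beta)=0$, establishing (T1) and (T2).

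The main obstacle I anticipate is the bookkeeping needed to show that the \emph{same} $\ep\delta$-sequence simultaneously (a) places all $k$ atypicality relations on consecutive pairs, and (b) does so without ``colliding'' — i.e. the $k$ chosen simple roots must be mutually orthogonal, which forces the $k$ pairs $\delta_i\ep_j$ to be pairwise non-adjacent and non-overlapping in the sequence. Verifying that the hook shape of $\la$ always permits such a non-overlapping placement is the combinatorial heart; I expect it follows from the strict nesting of the Frobenius coordinates (the $p_i$ strictly decrease and the $q_j$ strictly decrease among atypical indices), but making this precise requires a careful induction on $k$ or on the number of blocks. As an alternative, cleaner route, one can bypass the explicit construction entirely: cite \cite[Theorem 20]{CHR} that tame $\gl(n|m)$-modules coincide with Kostant modules in the sense of \cite{BS}, and then quote the known fact (e.g. from \cite{BS} or \cite{SZ1}) that every irreducible polynomial $\gl(n|m)$-module is a Kostant module; this is the argument the excerpt explicitly allows (``it also follows directly from \cite[Theorem 20]{CHR}''), and it reduces the proof to a one-line citation, with the explicit Borel construction above serving as an independent verification.
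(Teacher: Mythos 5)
Your fallback argument is exactly the paper's: the paper offers no proof of Theorem~\ref{thm:CHR1} at all, merely noting that the statement first appeared in \cite[Corollary 2.4]{MJ} and that, since irreducible polynomial modules are Kostant modules, it also follows from \cite[Theorem 20]{CHR}. So the citation route you end with is correct and is precisely what the paper does. Your primary, constructive route --- choosing an $\ep\delta$-sequence adapted to the hook shape of $\la$ so that the $k$ atypical roots $\delta_i-\ep_j$ become mutually orthogonal isotropic \emph{simple} roots of the resulting Borel $\underline{\mf b}$, with $(\la^{\underline{\mf b}}+\rho^{\underline{\mf b}},\beta)=0$ read off from the Frobenius coordinates of \eqref{frob:ht:wt} --- is a genuinely different and more informative argument, essentially the content of \cite{MJ} (and implicit in the $\mf b^{\texttt{odd}}$ computations of Lemma~\ref{lem:htwt:bodd-1}, where exactly such a Borel is exhibited for the ortho-symplectic restriction). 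But as you yourself flag, the combinatorial heart --- that the $k$ atypical $\delta\ep$ junctions can be placed simultaneously and without collision, and that the truncated coordinates $p_i,q_j$ plus $\rho^{\underline{\mf b}}$ really produce the vanishing at each junction --- is asserted rather than proved; filling it in amounts to reproving \cite[Corollary 2.4]{MJ}. Since you explicitly defer to the citation when the construction is not completed, the proposal as a whole is acceptable, and the explicit Borel you describe is a useful supplement rather than a gap.
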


Let $V$ be a finite-dimensional irreducible $\G$-module and suppose that $V=L_{\mf b}(\G,\gamma)$ for some Borel subalgebra $\mf b$ with ${\mf b}$-highest weight $\gamma\in\h^\ast$.
Given $w\in W$, consider the Borel subalgebra $w(\mf b)$. Then the fundamental systems $w(\Pi_{\mf b})$ and $\Pi_{w(\mf b)}$ coincide. The $\gamma$-weight space $V_{\gamma}$ is one-dimensional and we have $V_{\gamma+\alpha}=0$ for any $\alpha\in\Pi_{\mf b}$. Since $V$ is finite-dimensional, the space $V_{w(\gamma)}$ is one dimensional and $V_{w(\gamma)+w(\alpha)}=0$. Thus, we conclude that the ${w(\mf b)}$-highest weight for $V$ is $w(\gamma)$, and $L_{\mf b}(\G,\gamma)\cong L_{w(\mf b)}\left(\G,w\left(\gamma\right)\right)$ as $\G$-modules.

Suppose  in addition that $V$ is tame with respect to $\mf b$. Then, there exists a distinguished set $T_{\gamma}\subseteq\Pi_{\mf b}$ satisfying the condition (T2) above. Since the standard non-degenerate bilinear form on $\h^*$ is $W$-invariant and $w(\rho^{\mf b})=\rho^{w(\mf b)}$, it follows that $w(T_{\gamma})$ is a subset of $\Pi_{w(\mf b)}$ satisfying the condition (T2).  Thus, $V$ is also tame with respect to $w(\mf b)$ of highest weight $w(\gamma)$, where a distinguished set $T_{w(\gamma)}$ is given by $w(T_{\gamma})$. We summarize the above discussion in the following.

\begin{prop}\label{prop:reduction}
Let $\G$ be a finite-dimensional basic Lie superalgebra and let $V$ be a finite-dimensional irreducible $\G$-module.  Then $V$ is tame with respect to a Borel subalgebra $\mf b$ of highest weight $\gamma$ if and only if it is tame with respect to the Borel subalgebra $w(\mf b)$ of highest weight $w(\gamma)$ for any $w\in W$.
\end{prop}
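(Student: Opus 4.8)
The plan is to prove both implications by appealing to the symmetry between $\mf b$ and $w(\mf b)$ under the action of $W$; since $W$ is a group, it suffices in fact to prove only one direction, the other following by replacing $w$ with $w^{-1}$ and $\mf b$ with $w(\mf b)$. So I would assume that $V$ is tame with respect to $\mf b$ of highest weight $\gamma$ and show it is tame with respect to $w(\mf b)$ of highest weight $w(\gamma)$.

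The argument proceeds in two steps, both essentially already carried out in the paragraphs preceding the statement. First, I would verify that $w(\gamma)$ is the $w(\mf b)$-highest weight of $V$, i.e.\ that $L_{\mf b}(\G,\gamma)\cong L_{w(\mf b)}(\G,w(\gamma))$. The key observations are that the fundamental systems $w(\Pi_{\mf b})$ and $\Pi_{w(\mf b)}$ coincide (the Weyl group $W=W_{\G_{\even}}$ permutes the even reflections and carries Borel subalgebras with a fixed even part to one another in the expected way), and that, since $V$ is finite dimensional and hence has finitely many weights, the one-dimensional weight space $V_\gamma$ with $V_{\gamma+\alpha}=0$ for all $\alpha\in\Pi_{\mf b}$ is carried by $w$ to the one-dimensional space $V_{w(\gamma)}$ with $V_{w(\gamma)+w(\alpha)}=0$ for all $\alpha\in\Pi_{\mf b}$. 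Thus $w(\gamma)$ satisfies the defining property of the $w(\mf b)$-highest weight, establishing (T1) for $w(\mf b)$.

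Second, I would transport the distinguished set. Let $T_\gamma\subseteq\Pi_{\mf b}$ be a distinguished set of $k$ mutually orthogonal isotropic odd simple roots with $(\gamma+\rho^{\mf b},\beta)=0$ for all $\beta\in T_\gamma$, where $k$ is the degree of atypicality of $V$. I claim $w(T_\gamma)$ works for $w(\mf b)$. Indeed $w(T_\gamma)\subseteq w(\Pi_{\mf b})=\Pi_{w(\mf b)}$; the cardinality, mutual orthogonality, and isotropy are preserved because the bilinear form on $\h^*$ is $W$-invariant; and parity is preserved since $w\in W$ acts trivially on the $\Z/2$-grading of roots. Finally, using $w(\rho^{\mf b})=\rho^{w(\mf b)}$ (which follows from $\Pi_{w(\mf b)}=w(\Pi_{\mf b})$, the Weyl vector being determined by the positive system) together with $W$-invariance of the form,
\begin{equation*}
(w(\gamma)+\rho^{w(\mf b)},w(\beta))=(w(\gamma)+w(\rho^{\mf b}),w(\beta))=(\gamma+\rho^{\mf b},\beta)=0
\end{equation*}
for all $\beta\in T_\gamma$, so (T2) holds for $w(\mf b)$ with $T_{w(\gamma)}:=w(T_\gamma)$.

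There is no real obstacle here; the statement is a bookkeeping consequence of $W$-equivariance of all the structures involved (root systems, the bilinear form, the Weyl vector, highest weights). The only point requiring a moment's care is the identification $\Pi_{w(\mf b)}=w(\Pi_{\mf b})$ and the consequent $w(\rho^{\mf b})=\rho^{w(\mf b)}$, but this is exactly the content of the discussion immediately preceding the proposition and can be cited from there. Running the same argument with $w^{-1}$ in place of $w$ gives the converse, completing the proof.
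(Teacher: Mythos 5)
Your proposal is correct and follows essentially the same route as the paper, which proves the proposition in the discussion immediately preceding its statement: identify $w(\gamma)$ as the $w(\mf b)$-highest weight via the weight-space argument, then transport the distinguished set using $\Pi_{w(\mf b)}=w(\Pi_{\mf b})$, $w(\rho^{\mf b})=\rho^{w(\mf b)}$, and the $W$-invariance of the bilinear form. The only (harmless) cosmetic difference is that you make explicit the reduction of the converse to the forward direction via $w^{-1}$, which the paper leaves implicit.
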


Now we assume that $\G=\osp(\ell|2n)$.
Proposition \ref{prop:reduction} implies that in order to classify the tame $\G$-modules, we need to consider only Borel subalgebras $\mf b$ with $\mf b_{\bar 0}=(\mf b^{\texttt{st}})_\even$. We let ${\mf l}=\gl(n|m)$ be the Levi subalgebra of $\G$ corresponding to the subdiagram \eqref{ABC:diagram:A1} of \eqref{ABC:diagram:B1} and \eqref{ABC:diagram:D1}. Note that $\underline{\mf b}^{\tt st}=\mf b^{\tt st}\cap\mf l$.

\begin{lem}\label{lem:T:set:Atype} Let $\G$ be an ortho-symplectic Lie superalgebra and {let $V$ be a finite-dimensional irreducible $\G$-module of degree of atypicality $k$. Suppose that $\mf b$ and $\mf b'$ are two Borel subalgebras related by $s_\alpha(\Pi_{\mf b})=\Pi_{\mf b'}$, where $\alpha$ is an isotropic odd simple root in $\Pi_{\mf b}$ of the form $\pm(\delta_i-\epsilon_j)$.  Let $\gamma$ and $\gamma'$ be the highest weights of $V$ with respect to ${\mf b}$ and ${\mf b}'$, respectively.}
Suppose that $T_{\mf b}$ is a set of $k$ mutually orthogonal isotropic odd roots of the form $\{\delta_{i_r}-\epsilon_{j_r}\,|\, r=1,\ldots,k\}$ such that $(\gamma+\rho^{\mf b},\delta_{i_r}-\epsilon_{j_r})=0$, for all $r$.  Then there exists a set $T_{\mf b'}$ consisting of $k$ mutually orthogonal isotropic odd roots of the same form orthogonal to $\gamma'+\rho^{\mf b'}$.
\end{lem}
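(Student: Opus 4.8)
I would recast the statement as the assertion that the ``type $A$'' degree of atypicality, which can be read off combinatorially, is unchanged by an odd reflection through a root of the form $\pm(\delta_i-\ep_j)$. Write $\mu=\gamma+\rho^{\mf b}=\sum_{a=1}^{n}p_a\delta_a+\sum_{b=1}^{m}q_b\ep_b$. Each $\delta_a-\ep_b$ is an isotropic odd root of $\G$, and $(\delta_a-\ep_b,\delta_{a'}-\ep_{b'})=\delta_{b,b'}-\delta_{a,a'}$, so a pairwise orthogonal family of such roots is exactly a family $\{\delta_{a_r}-\ep_{b_r}\}$ with the $a_r$ pairwise distinct and the $b_r$ pairwise distinct; moreover $(\mu,\delta_a-\ep_b)=-p_a-q_b$, so $\delta_a-\ep_b\perp\mu$ precisely when $p_a=-q_b$. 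Hence the bipartite graph on $\{1,\dots,n\}\sqcup\{1,\dots,m\}$ with edges $\{(a,b):p_a=-q_b\}$ is a disjoint union of complete bipartite graphs, one for each value $v$ occurring as $p_a=-q_b$, with parts $I_v(\mu)=\{a:p_a=v\}$ and $J_v(\mu)=\{b:q_b=-v\}$; so the maximal number $A(\mu)$ of pairwise orthogonal roots of the form $\delta_a-\ep_b$ that are orthogonal to $\mu$ equals $\sum_v\min\bigl(|I_v(\mu)|,|J_v(\mu)|\bigr)$, a finite sum.

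Since roots of the form $\delta_a-\ep_b$ are isotropic, $A(\gamma+\rho^{\mf b})\le k$, while the hypothesized set $T_{\mf b}$ gives $A(\gamma+\rho^{\mf b})\ge k$, so $A(\gamma+\rho^{\mf b})=k$. By \eqnref{eq:odd reflection rho}, $\gamma'+\rho^{{\mf b}'}$ equals either $\gamma+\rho^{\mf b}$ (if $(\gamma,\alpha)\ne0$) or $\gamma+\rho^{\mf b}+\alpha$ (if $(\gamma,\alpha)=0$). In the first case one takes $T_{\mf b'}=T_{\mf b}$ directly. In the second case, writing $\alpha=\pm(\delta_i-\ep_j)$ and using that $(\rho^{\mf b},\alpha)=\hf(\alpha,\alpha)=0$ for an isotropic simple root, we get $(\gamma+\rho^{\mf b},\alpha)=0$, that is $p_i=-q_j=:v_0$.

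It then remains to check $A(\gamma+\rho^{\mf b}+\alpha)=A(\gamma+\rho^{\mf b})$. Replacing $\mu$ by $\mu':=\mu+\alpha$ changes only $p_i\mapsto p_i\pm1$ and $q_j\mapsto q_j\mp1$; because $p_i=v_0=-q_j$, this moves the index $i$ from $I_{v_0}(\mu)$ into $I_{v_0\pm1}(\mu')$ and, simultaneously, the index $j$ from $J_{v_0}(\mu)$ into $J_{v_0\pm1}(\mu')$, all other parts being unchanged. As $i\in I_{v_0}(\mu)$ and $j\in J_{v_0}(\mu)$, the term $\min(|I_{v_0}|,|J_{v_0}|)$ in $A$ decreases by exactly $1$ and the term $\min(|I_{v_0\pm1}|,|J_{v_0\pm1}|)$ increases by exactly $1$, so $A(\mu')=A(\mu)=k$. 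Any family of $k$ pairwise orthogonal roots of the form $\delta_a-\ep_b$ orthogonal to $\gamma'+\rho^{{\mf b}'}=\mu'$ then serves as $T_{\mf b'}$. The only genuine content is the combinatorial description of $A(\mu)$ in the first paragraph and the bookkeeping just carried out; I expect the main (minor) subtlety to be that both signs $\alpha=\pm(\delta_i-\ep_j)$ allowed by the hypothesis must be handled, the sign only dictating whether the affected indices pass to the class $v_0+1$ or $v_0-1$.
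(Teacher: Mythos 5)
Your proof is correct. The paper's own argument is a direct case analysis that explicitly rebuilds $T_{\mf b'}$ from $T_{\mf b}$: if $(\gamma+\rho^{\mf b},\alpha)\neq 0$ or $\alpha\in T_{\mf b}$ one keeps $T_{\mf b}$ unchanged, and otherwise one swaps out the root(s) of $T_{\mf b}$ sharing an index with $\alpha=\delta_i-\ep_j$ (three subcases, according to whether a root $\delta_i-\ep_{j'}$, a root $\delta_{i'}-\ep_j$, or both occur in $T_{\mf b}$), replacing them by $\alpha$ and, in the last subcase, also by $\delta_{i'}-\ep_{j'}$. You instead prove that the maximum size $A(\mu)$ of such a family --- computed as a maximum matching $\sum_v\min(|I_v|,|J_v|)$ in a disjoint union of complete bipartite graphs --- is unchanged when $\mu$ is shifted by $\alpha$. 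This is a genuinely different organization: it is less constructive but cleaner, and it absorbs the paper's subcase enumeration (including the implicit appeal to maximality of $T_{\mf b}$, which is what rules out the ``fourth'' subcase where no root of $T_{\mf b}$ meets $\alpha$) into a single bookkeeping step. Two small points are worth making explicit in a write-up: the inequality $A(\gamma+\rho^{\mf b})\le k$ uses that each $\delta_a-\ep_b$ may be replaced by whichever of $\pm(\delta_a-\ep_b)$ lies in $\Phi^+_{\mf b}$ without affecting isotropy or mutual orthogonality, since the degree of atypicality is defined via positive roots; and the identity $(\rho^{\mf b},\alpha)=\hf(\alpha,\alpha)=0$ for an isotropic simple root, which you invoke to pass between $(\gamma,\alpha)=0$ and $(\gamma+\rho^{\mf b},\alpha)=0$, is the standard fact the paper also relies on implicitly when switching between the two conditions.
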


\begin{proof}
The proof consists of considering all possible scenarios for $\alpha$. For definiteness let us suppose that $\alpha=\delta_i-\epsilon_j$. If $\alpha=-\delta_i+\epsilon_j$, then we just replace $\alpha$ by $-\alpha$ below.

Case (1): $(\gamma +\rho^{\mf b},\alpha)\not=0$. Then by \eqref{eq:odd reflection rho} $\gamma +\rho^{\mf b}=\gamma'+\rho^{\mf b'}$ and we may take the same set $T_{\mf b}$ for $T_{\mf b'}$.

Case (2): $(\gamma +\rho^{\mf b},\alpha)=0$ and $\alpha\in T_{\mf b}$. Then by \eqref{eq:odd reflection rho} $\gamma+\rho^{\mf b}+\alpha=\gamma'+\rho^{\mf b'}$ and we still can take $T_{\mf b'}$ to be $T_{\mf b}$, since the roots in $T_{\mf b}$ are mutually orthogonal.

Case (3): $(\gamma +\rho^{\mf b},\alpha)=0$ and $\alpha\not\in T_{\mf b}$. In this case there are three possible scenarios.   In the Subcase (i) we have a $\beta\in T_{\mf b}$ such that $\beta=\delta_i-\epsilon_{j'}$, $j'\not=j$, and furthermore there is no $\beta'\in T_{\mf b}$ of the form $\delta_{i'}-\epsilon_j$. In this case, we take $T_{\mf b'}=\left(\{\alpha\}\cup T_{\mf b}\right)\setminus\{\beta\}$. In the Subcase (ii) we have a $\beta\in T_{\mf b}$ such that $\beta=\delta_{i'}-\epsilon_j$, $i'\not=i$, and furthermore there is no $\beta'\in T_{\mf b}$ of the form $\delta_i-\epsilon_{j'}$.  Here we take $T_{\mf b'}=\left(\{\alpha\}\cup T_{\mf b}\right)\setminus\{\beta\}$. In the Subcase (iii) we have a $\beta\in T_{\mf b}$ of the form $\beta=\delta_i-\epsilon_{j'}$, ${j'}\not=j$, and we also have a $\beta'$ of the form $\beta'=\delta_{i'}-\epsilon_j$, $i'\not=i$. In this case it is also easy to see that we can replace $\beta$ and $\beta'$ in $T_{\mf b}$ by $\alpha$ and $\delta_{i'}-\epsilon_{j'}$ to get $T_{\mf b'}$.
\end{proof}

Suppose that $\G=\osp(2m+1|2n)$. By our discussion in Section \ref{sec:Borels B}, the Borel subalgebras $\mf b$ with $\mf b_{\bar 0}=(\mf b^{\texttt{st}})_\even$ are in one-to-one correspondence with the Borel subalgebras $\underline{\mf b}$ of  $\mf l$ such that $\underline{\mf b}_\even=\left(\underline{\mf b}^{\tt st}\right)_\even$, which are obtained by $\underline{\mf b}=\mf b\cap\mf l$ or removing the right-most node of the corresponding Dynkin diagram for ${\mf b}$. 
We observe that the difference between the Weyl vectors $\rho^{\mf b}$ for $\osp(2m+1|2n)$ and $\rho^{\underline{\mf b}}$ for $\gl(n|m)$ is equal to a scalar multiple of the supertrace ${\bf 1}_{n|m}=\sum_{i=1}^n\delta_i-\sum_{j=1}^m\ep_j$ for $\mf l$, and hence $(\rho^{\mf b}-\rho^{\underline{\mf b}},\delta_i-\ep_j)=0$ for all $i,j$.

\begin{thm}\label{thm:classification B}
Let $\G=\osp(2m+1|2n)$  and let $\la\in\mc H(n|m)$. Suppose that $L(\la^\natural)$ has degree of atypicality $k\geq 1$.  Then  $L(\la^\natural)$ is tame if and only if it satisfies the following condition:
\begin{itemize}
\item[(\underline{T})] there exist $k$ mutually orthogonal isotropic odd roots of the form $\beta_{r}=\delta_{i_r}-\ep_{j_r}$ such that $(\la^\natural+\rho^{\mf b^\st},\beta_r)=0$, for $r=1,\ldots,k$.
\end{itemize}
\end{thm}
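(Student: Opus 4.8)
The plan is to prove both implications by reducing to a single convenient Borel subalgebra and invoking the invariance results established above. For the ``only if'' direction, suppose $L(\la^\natural)$ is tame. By definition (T1)--(T2) there is some Borel $\mf b'$ with highest weight $\gamma'$ and a distinguished set $T_{\gamma'}\subseteq\Pi_{\mf b'}$ of $k$ mutually orthogonal isotropic odd simple roots orthogonal to $\gamma'+\rho^{\mf b'}$. By Proposition \ref{prop:reduction} we may assume $\mf b'_{\bar 0}=(\mf b^\st)_{\bar 0}$, so $\mf b'$ corresponds to an $\ep\delta$-sequence. The key point is that any such $\mf b'$ is connected to $\mf b^\st$ by a chain of odd reflections through isotropic odd simple roots, and for $\osp(2m+1|2n)$ these isotropic odd simple roots are all of the form $\pm(\delta_i-\ep_j)$. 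Thus I would apply Lemma \ref{lem:T:set:Atype} repeatedly along such a chain to transport the distinguished set $T_{\gamma'}$ to a set $T$ of $k$ mutually orthogonal isotropic odd roots of the form $\delta_{i_r}-\ep_{j_r}$ orthogonal to $\la^{\mf b^\st}+\rho^{\mf b^\st}$. Since $\la^{\mf b^\st}=\la^\natural$ by \eqref{eq:aux004} and the discussion in Section \ref{sec:Borels B}, this is exactly condition (\underline{T}). One subtlety: Lemma \ref{lem:T:set:Atype} is stated for the set $T_{\mf b}$ consisting of roots of the form $\delta_{i_r}-\ep_{j_r}$, not a priori simple roots, so at the first step I need to observe that a distinguished set of odd \emph{simple} roots in $\Pi_{\mf b'}$ is automatically of this form (the only isotropic odd roots in $\Phi^+$ are $\pm(\delta_i-\ep_j)$, and simple ones are among them).

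\textbf{The converse.} Suppose (\underline{T}) holds: there are $k$ mutually orthogonal isotropic odd roots $\beta_r=\delta_{i_r}-\ep_{j_r}$ with $(\la^\natural+\rho^{\mf b^\st},\beta_r)=0$. I want to find a Borel $\mf b$ making $L(\la^\natural)$ tame, i.e.\ with the $\beta_r$ (or their images) appearing as \emph{simple} roots. The natural candidate is a Borel whose $\ep\delta$-sequence is chosen so that the $\delta_{i_r}$ and $\ep_{j_r}$ can be placed adjacent — more precisely, an $\ep\delta$-sequence in which consecutive members realize each $\delta_{i_r}-\ep_{j_r}$ as a simple root, which is possible precisely because the pairs $(i_r,j_r)$ are ``non-crossing'' in the appropriate sense (this is where mutual orthogonality, meaning all $i_r$ distinct and all $j_r$ distinct, is used). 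I would then transport $\la^\natural+\rho^{\mf b^\st}$ to $\la^{\mf b}+\rho^{\mf b}$ along the chain of odd reflections, using Lemma \ref{lem:T:set:Atype} to track that orthogonality to a set of the required form is preserved at each step, and verify that at the terminal Borel $\mf b$ the relevant roots are simple and still orthogonal to $\la^{\mf b}+\rho^{\mf b}$. Combined with \eqref{eq:aux004} identifying $\la^{\mf b}$ with $\la^{\underline{\mf b}}$ via the Frobenius coordinates, this gives a distinguished set $T_{\la^{\mf b}}\subseteq\Pi_{\mf b}$, establishing tameness.

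\textbf{Main obstacle.} I expect the hard part to be the converse: given abstract data $\{\beta_r\}$, actually constructing a Borel subalgebra in which they become simple roots orthogonal to the shifted highest weight, and verifying compatibility with the combinatorics of $\la^{\mf b}$ under the odd-reflection chain. One must check that the ``obvious'' $\ep\delta$-sequence — grouping the $\delta$'s and $\ep$'s so the atypical pairs are adjacent — is consistent with the positivity/dominance constraints that $\la^{\mf b}$ satisfies, i.e.\ that the Frobenius coordinate description in \eqref{frob:ht:wt} actually places zero entries in the right spots so that the inner products $(\la^{\mf b}+\rho^{\mf b},\delta_{i_r}-\ep_{j_r})$ vanish. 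The remark that $\rho^{\mf b}-\rho^{\underline{\mf b}}$ is a multiple of ${\bf 1}_{n|m}$ and hence orthogonal to all $\delta_i-\ep_j$ is exactly the tool that lets me work inside $\gl(n|m)$ rather than $\osp(2m+1|2n)$ for these computations, reducing the problem to the $\gl$ case, where Theorem \ref{thm:CHR1} and the Kostant-module characterization of \cite{CHR} handle the structure. The rest — checking mutual orthogonality translates to the distinctness of indices, and that odd reflections of the form $s_{\delta_i-\ep_j}$ suffice to connect any two such Borels — is bookkeeping that Lemma \ref{lem:T:set:Atype} is precisely designed to absorb.
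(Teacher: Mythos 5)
Your ``only if'' direction is exactly the paper's argument: reduce via Proposition \ref{prop:reduction} to a Borel $\mf b$ with $\mf b_\even=(\mf b^{\st})_\even$, observe that every odd simple root of such a Borel (hence every element of the distinguished set) is of the form $\pm(\delta_i-\ep_j)$, and transport the distinguished set to $\mf b^{\st}$ by repeated application of Lemma \ref{lem:T:set:Atype} along the chain of odd reflections. No issues there.

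The converse is where your write-up has a genuine gap. Your primary plan --- build an $\ep\delta$-sequence making each $\beta_r=\delta_{i_r}-\ep_{j_r}$ simple, justified by the claim that this ``is possible precisely because the pairs are non-crossing,'' which you derive from mutual orthogonality --- cannot be right as stated: \emph{every} finite-dimensional irreducible $\gl(n|m)$-module of atypicality $k$ admits $k$ mutually orthogonal positive isotropic roots (necessarily of the form $\delta_i-\ep_j$, with distinct $i$'s and distinct $j$'s) orthogonal to the $\rho$-shifted highest weight, yet not every such module is tame (tame $=$ Kostant by \cite{CHR}, and non-Kostant modules exist in general --- otherwise all irreducible characters would be closed-form). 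So mutual orthogonality is not what makes the construction work; the hook-partition structure of $\la$ is essential. Moreover, even with a candidate Borel in hand you would still need the $\beta_r$ to be orthogonal to the shifted highest weight \emph{at that Borel}: the shifted weight changes along the odd-reflection chain by \eqref{eq:odd reflection rho}, and Lemma \ref{lem:T:set:Atype} only guarantees that \emph{some} set of the right form and cardinality stays orthogonal, not the one you started with. The paper's converse avoids all of this: Condition (\underline{T}) together with the fact that $\rho^{\mf b^{\st}}-\rho^{\underline{\mf b}^{\st}}$ is a multiple of ${\bf 1}_{n|m}$ shows that the $\gl(n|m)$-module $L(\mf l,\la^\natural)$ has degree of atypicality exactly $k$; Theorem \ref{thm:CHR1} then \emph{directly supplies} a Borel $\underline{\mf b}$ of $\gl(n|m)$ and a distinguished set $T\subseteq\Pi_{\underline{\mf b}}$ of $k$ odd simple roots orthogonal to $\la^{\underline{\mf b}}+\rho^{\underline{\mf b}}$; lifting $\underline{\mf b}$ to the unique Borel $\mf b$ of $\G$ with $\mf b_\even=(\mf b^{\st})_\even$ and $T\subseteq\Pi_{\mf b}$, and using once more that $\rho^{\mf b}-\rho^{\underline{\mf b}}$ is a multiple of ${\bf 1}_{n|m}$ together with \eqref{eq:aux004}, transfers the orthogonality and yields tameness. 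You list all of these ingredients in your closing paragraph, but you cast Theorem \ref{thm:CHR1} as merely ``handling the structure'' while attempting the construction by hand; that construction \emph{is} the content of Theorem \ref{thm:CHR1}, and invoking it as the first step eliminates both the $\ep\delta$-combinatorics and any use of Lemma \ref{lem:T:set:Atype} in this direction.
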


\begin{proof}
Suppose that $L(\la^\natural)$ is tame with respect to $\mf b$ of highest weight $\la^{\mf b}$ with a distinguished subset $T_{\la^{\mf b}}\subseteq \Pi_{\mf b}$. By Proposition \ref{prop:reduction}, we may assume that $\mf b_\even=(\mf b^{\texttt{st}})_\even$. By the classification of Borel subalgebras in Section \ref{sec:Borels B}, all the simple odd roots in $\Pi_{\mf b}$ are of the form $\pm(\delta_i-\ep_j)$, and thus, so are the simple roots in $T_{\la^{\mf b}}$.

Since $\mf b$ is a Borel subalgebra of $\G$ with $\mf b_\even=(\mf b^\st)_\even$, applying a sequence of odd reflections corresponding to odd simple roots of the form $\pm(\delta_i-\ep_j)$, we transform $\mf b$ to the standard Borel subalgebra ${\mf b}^\st$. By Lemma \ref{lem:T:set:Atype} we obtain a set of $k$ mutually orthogonal isotropic roots of the form $\delta_i-\ep_j$ orthogonal to $\la^\natural+\rho^{{\mf b}^\st}$, and hence Condition (\underline{T}) is satisfied.

Conversely, suppose that Condition (\underline{T}) holds. {We first note that $L({\mf l},\la^\natural)$ has the degree of atypicality $k$ since $\rho^{{\mf b}^{\tt st}}-\rho^{\underline{\mf b}^{\tt st}}$ is a multiple of ${\bf 1}_{n|m}$ and hence $(\la^\natural + \rho^{{\mf b}^{\tt st}},\delta_i-\ep_j)=0$ if and only if $(\la^\natural + \rho^{\underline{\mf b}^{\tt st}},\delta_i-\ep_j)=0$ for $i,j$. By Theorem \ref{thm:CHR1} there exists a Borel subalgebra $\underline{\mf b}$ of $\mf l$ with the property that $L({\mf l},\la^\natural)$ is tame with respect to $\underline{\mf b}$ of highest weight $\la^{\underline{\mf b}}$ with a distinguished set $T$ of $k$ isotropic odd simple roots. Since $\underline{\mf b}_\even=\left(\underline{\mf b}^{\tt st}\right)_\even$, $\underline{\mf b}$ corresponds uniquely to a Borel subalgebra $\mf b$ of $\G$  with $\mf b_\even=(\mf b^\st)_\even$ and we further have $T\subseteq \Pi_{\mf b}$.}
Since $\rho^{\mf b}-\rho^{\underline{\mf b}}$ is a multiple of ${\bf 1}_{n|m}$ and  $(\la^{\underline{\mf b}}+\rho^{\underline{\mf b}},\beta)=0$ for all $\beta\in T$, we also have $(\la^{\mf b}+\rho^{\mf b},\beta)=0$ for all $\beta\in T$. Thus, $L(\la^\natural)$ is tame with respect to $\mf b$ of highest weight $\la^{\mf b}$ with the same $T$ as a distinguished set.
\end{proof}

Next, suppose that $\G=\osp(2m|2n)$ with $m\ge 2$. We first observe the following.
\begin{lem}\label{lem:sigma tame}
Let $V$ be a finite-dimensional irreducible $\G$-module.  Then $V$ is tame with respect to ${\mf b}$ of highest weight $\gamma$ if and only if $V^{\sigma}$ is tame with respect to $\sigma({\mf b})$ of highest weight $\sigma(\gamma)$. In this case, we have $T_{\sigma(\gamma)}=\sigma(T_{\gamma})$.
\end{lem}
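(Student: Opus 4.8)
The plan is to exploit the fact that $\sigma$ is an automorphism of $\G$ preserving the Cartan subalgebra $\h$, together with the already-recorded compatibility $L_{\mf b}(\G,\gamma)^\sigma\cong L_{\sigma(\mf b)}(\G,\sigma(\gamma))$. First I would unwind the definition of tameness: $V$ is tame with respect to $\mf b$ of highest weight $\gamma$ means precisely that (T1) $V\cong L_{\mf b}(\G,\gamma)$ and (T2) there is a distinguished set $T_\gamma\subseteq\Pi_{\mf b}$ of $k$ mutually orthogonal isotropic odd simple roots with $(\gamma+\rho^{\mf b},\beta)=0$ for all $\beta\in T_\gamma$, where $k$ is the degree of atypicality of $V$. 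Since $V$ and $V^\sigma$ have the same dimension and $\sigma$ preserves central characters (hence, in particular, degrees of atypicality are matched — though one can also argue this directly via the bilinear form), it suffices to transport the data $(\mf b,\gamma,T_\gamma)$ through $\sigma$.

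The key observations are: (i) $\sigma$ is a Lie superalgebra automorphism fixing $\h$, so it sends the Borel subalgebra $\mf b$ to a Borel subalgebra $\sigma(\mf b)$, and the induced map on $\h^*$ carries $\Pi_{\mf b}$ bijectively onto $\Pi_{\sigma(\mf b)}$, preserving the even/odd and isotropic/non-isotropic dichotomies; (ii) $\sigma$ preserves the bilinear form on $\h^*$, so $w(\rho^{\mf b})$-type identities go through — concretely $\sigma(\rho^{\mf b})=\rho^{\sigma(\mf b)}$, since $\sigma$ permutes $\Phi^+_{\mf b,\even}$ and $\Phi^+_{\mf b,\odd}$ onto $\Phi^+_{\sigma(\mf b),\even}$ and $\Phi^+_{\sigma(\mf b),\odd}$ respectively; (iii) $L_{\mf b}(\G,\gamma)^\sigma\cong L_{\sigma(\mf b)}(\G,\sigma(\gamma))$, which is the statement recorded just before \eqref{eq:sigma twist}. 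Granting these, if $V\cong L_{\mf b}(\G,\gamma)$ then $V^\sigma\cong L_{\sigma(\mf b)}(\G,\sigma(\gamma))$, giving (T1) for $V^\sigma$ with respect to $\sigma(\mf b)$. For (T2), set $T_{\sigma(\gamma)}:=\sigma(T_\gamma)$. Then $\sigma(T_\gamma)\subseteq\sigma(\Pi_{\mf b})=\Pi_{\sigma(\mf b)}$ is a set of $k$ mutually orthogonal isotropic odd simple roots because $\sigma$ is an isometry preserving parities, and for each $\beta\in T_\gamma$ we have
\begin{equation*}
(\sigma(\gamma)+\rho^{\sigma(\mf b)},\sigma(\beta))=(\sigma(\gamma)+\sigma(\rho^{\mf b}),\sigma(\beta))=(\gamma+\rho^{\mf b},\beta)=0,
\end{equation*}
so $\sigma(T_\gamma)$ is indeed distinguished. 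This proves one direction; the converse follows by applying the same argument to $V^\sigma$ and the automorphism $\sigma$, using $\sigma^2=\mathrm{id}$ (so $\sigma(\sigma(\mf b))=\mf b$, $\sigma(\sigma(\gamma))=\gamma$, $(V^\sigma)^\sigma\cong V$), which also yields the asserted equality $T_{\sigma(\gamma)}=\sigma(T_\gamma)$.

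The only point requiring a little care — and the step I expect to be the mildest obstacle — is verifying that $\sigma$ genuinely matches the degrees of atypicality, i.e. that the $k$ in the definition of tameness for $V$ equals the $k$ for $V^\sigma$; but this is immediate since the degree of atypicality is computed from $(\gamma+\rho^{\mf b},\alpha)$ over isotropic roots $\alpha$, and $\sigma$ preserves the form and the highest-weight datum, so it restricts to a bijection between the relevant maximal mutually orthogonal sets. Everything else is bookkeeping: checking that $\sigma$, being a diagram automorphism, acts compatibly on roots, positive systems, Weyl vectors, and the form — all of which are recorded or immediate from the description of $\sigma$ in Section~\ref{sec:Borels D}.
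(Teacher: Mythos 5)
Your proposal is correct and follows essentially the same route as the paper, which simply invokes $\sigma(\Pi_{\mf b})=\Pi_{\sigma(\mf b)}$ and $\sigma(\rho^{\mf b})=\rho^{\sigma(\mf b)}$ together with the isomorphism $L_{\mf b}(\G,\gamma)^\sigma\cong L_{\sigma(\mf b)}(\G,\sigma(\gamma))$; you have merely written out the bookkeeping (isometry of the form, preservation of parities and isotropy, invariance of the degree of atypicality, and $\sigma^2=\mathrm{id}$ for the converse) in full detail.
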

\begin{proof}
It follows from the fact that  for a given Borel subalgebra ${\mf b}$, $\sigma(\Pi_{\mf b})=\Pi_{\sigma({\mf b})}$ and $\sigma(\rho^{\mf b})=\rho^{\sigma({\mf b})}$.
\end{proof}

In light of Lemma \ref{lem:sigma tame} and \eqref{eq:sigma twist} it is enough to determine when the module $L(\la^\natural)$ is tame, for $\la\in\mc{H}(n|m)$.

\begin{thm}\label{thm:classification D}
Let $\G=\osp(2m|2n)$ with $m\ge 2$, and let $\la\in\mc H(n|m)$. Suppose that $L(\la^\natural)$ has degree of atypicality $k\geq 1$. Then  $L(\la^\natural)$ is tame if and only if one of the following conditions hold:
\begin{itemize}
\item[(i)] In the case when $(\la^\natural,\ep_m)=0$, or equivalently when $\la_{n+1}<m$, Condition {\rm (\underline{T})} holds.
\item[(ii)] In the case when $(\la^\natural,\ep_m)>0$, or equivalently when $\la_{n+1}=m$, we have $k=1$ and there exists an isotropic odd root of the form $\beta=\delta_{i}+\ep_{m}$ for some $1\leq i\leq n$ such that $(\la^\natural+\rho^{\mf b^\st},\beta)=0$,
\end{itemize}
\end{thm}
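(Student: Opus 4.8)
The argument runs parallel to the proof of Theorem~\ref{thm:classification B}, the new features being the two ``forked'' nodes $\ep_{m-1}\pm\ep_m$ at the end of the $D$-type diagram and the outer automorphism $\sigma$. By Proposition~\ref{prop:reduction} it suffices to treat Borel subalgebras $\mf b$ with $\mf b_\even=(\mf b^\st)_\even$; by Section~\ref{sec:Borels D} these correspond to signed $\ep\delta$-sequences, and every isotropic odd simple root of such a $\mf b$ has the form $\pm(\delta_i-\ep_j)$, with the sole exceptions of $\delta_n+\ep_m$ (present when the sequence ends in $\delta\ep$) and $\pm(\delta_i+\ep_m)$ (present when $s(\mf b)=-1$). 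Writing $\la^\natural+\rho^{\mf b^\st}=\sum_i a_i\delta_i+\sum_j b_j\ep_j$, the hook condition on $\la$ gives $a_1>\cdots>a_n$ and $b_1>\cdots>b_m=\kappa_m\ge 0$, where $(\kappa_1,\dots,\kappa_m)$ is the transpose of $(\la_{n+1},\la_{n+2},\dots)$; note $\kappa_m=0\iff\la^\natural=\la^\natural_-\iff(\la^\natural,\ep_m)=0\iff\la_{n+1}<m$, which is exactly the dichotomy between (i) and (ii). An isotropic odd root $\delta_i-\ep_j$ (resp.\ $\delta_i+\ep_j$) is orthogonal to $\la^\natural+\rho^{\mf b^\st}$ iff $a_i=-b_j$ (resp.\ $a_i=b_j$); since $(\delta_i\pm\ep_j,\delta_{i'}\pm\ep_{j'})=0$ for $i\ne i'$, $j\ne j'$, while no index $i$ or $j$ can occur in two mutually orthogonal such roots, the degree of atypicality $k$ is the maximal number of pairwise-disjoint pairs $(i,j)$ with $a_i=\pm b_j$.

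\textbf{Case (i): $\kappa_m=0$.} Since $\la^\natural=\la^\natural_-$, Lemma~\ref{lem:sigma tame} together with \eqref{eq:sigma twist} shows $L(\la^\natural)$ is tame with respect to $\mf b$ iff it is tame with respect to $\sigma(\mf b)$; as $\sigma$ interchanges the $s(\mf b)=1$ and $s(\mf b)=-1$ Borels, we may assume $s(\mf b)=1$. The implication ``(\underline{T})$\Rightarrow$tame'' is obtained verbatim from the proof of Theorem~\ref{thm:classification B}: using that $\rho^{\mf b^\st}-\rho^{\underline{\mf b}^\st}$ is a multiple of ${\bf 1}_{n|m}$, condition (\underline{T}) forces $L(\mf l,\la^\natural)$ to have atypicality $k$, so by Theorem~\ref{thm:CHR1} it is tame with respect to a Borel $\underline{\mf b}$ of $\mf l$ with distinguished set $T$ of $k$ isotropic odd simple roots $\delta_i-\ep_j$; extending $\underline{\mf b}$ to the Borel $\mf b$ of $\G$ with $s(\mf b)=1$ and invoking \eqref{eq:aux001} and $\rho^{\mf b}-\rho^{\underline{\mf b}}\in\C\,{\bf 1}_{n|m}$ gives tameness with respect to $\mf b$. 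For ``tame$\Rightarrow$(\underline{T})'': given a distinguished set $T_{\la^{\mf b}}\subseteq\Pi_{\mf b}$ with $s(\mf b)=1$, sort the $\ep\delta$-sequence of $\mf b$ to that of $\mf b^\st$ by a sequence of odd reflections at roots $\pm(\delta_i-\ep_j)$ (when the sequence ends with a $\delta$, one reflection first brings it to ``ends with $\delta\ep$'', after which all swaps stay in the $A$-part), applying Lemma~\ref{lem:T:set:Atype} at each step; a root $\delta_n+\ep_m$ in $T_{\la^{\mf b}}$, or in its images, is handled by an extension of Lemma~\ref{lem:T:set:Atype} that permits members of $T$ and reflection roots of the form $\pm(\delta_i+\ep_m)$. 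One ends with $k$ mutually orthogonal isotropic odd roots orthogonal to $\la^\natural+\rho^{\mf b^\st}$, each of the form $\delta_i-\ep_j$ or $\delta_i+\ep_m$; but a root $\delta_i+\ep_m$ in this set forces $a_i=b_m=0$, whence $\delta_i-\ep_m$ is likewise orthogonal to $\la^\natural+\rho^{\mf b^\st}$ and to the remaining roots of the set, so it may be substituted and (\underline{T}) follows.

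\textbf{Case (ii): $\kappa_m>0$.} Now $b_j>0$ for every $j$, so in any match $a_i=\pm b_j$ one has $a_i\ne 0$, with the sign forced by $\mathrm{sign}(a_i)$. For ``$\Leftarrow$'': given $k=1$ and an index $i_0$ with $a_{i_0}=b_m$, I would produce a Borel $\mf b$ having $\pm(\delta_{i_0}+\ep_m)$ among its isotropic odd simple roots with $(\la^{\mf b}+\rho^{\mf b},\delta_{i_0}+\ep_m)=0$, obtained from $\mf b^\st$ by odd reflections that move $i_0$ next to $\ep_m$ at the forked end (a Weyl permutation of the $\delta$'s via Proposition~\ref{prop:reduction}, then $A$-part swaps, landing in an $s(\mf b)=-1$ Borel when $i_0<n$); the verification that the pairing stays $0$ uses \eqref{eq:odd reflection rho} and the hypothesis $k=1$, which keeps the intermediate highest weights orthogonal to $\delta_{i_0}+\ep_m$, and then $\{\delta_{i_0}+\ep_m\}$ is a distinguished set. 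For ``$\Rightarrow$'': if $L(\la^\natural)$ is tame with respect to $\mf b$ with distinguished set $T$, transport $T$ to $\mf b^\st$ as in Case~(i), getting $k$ mutually orthogonal isotropic odd roots orthogonal to $\la^\natural+\rho^{\mf b^\st}$, each $\delta_i-\ep_j$ or $\delta_i+\ep_m$; the requirement that a distinguished set consist of \emph{simple} roots, combined with the rigidity of the forked end, rules out $k\ge 2$ and any root other than some $\delta_i+\ep_m$, forcing $k=1$ and the existence of $i$ with $a_i=b_m$.

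\textbf{Main obstacle.} The delicate part is Case~(ii), and within it the bookkeeping at the forked nodes $\ep_{m-1}\pm\ep_m$: one must show that for $\osp(2m|2n)$ with $m\ge 2$ the constraint that a distinguished set lie inside $\Pi_{\mf b}$ (not merely be a set of orthogonal roots) is precisely what pins the unique atypical pairing of a tame $\kappa_m>0$ module to $\ep_m$ and excludes atypicality exceeding $1$; equivalently, one must control the behaviour of distinguished sets under the odd reflections that move $\ep_m$ across the fork, and under $\sigma$ and the passage between $s(\mf b)=\pm1$ Borels. This is further complicated by the fact (noted after \eqref{eq:aux001}) that no closed formula for $\la^{\mf b}$ is available when the $\ep\delta$-sequence ends with $\delta$ and $s(\mf b)=-1$, so the relevant verifications there must proceed through odd reflections and \eqref{eq:odd reflection rho} rather than by a direct computation of Frobenius coordinates.
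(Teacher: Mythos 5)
Your overall strategy coincides with the paper's: reduce to Borels with $\mf b_\even=(\mf b^{\st})_\even$ via Proposition \ref{prop:reduction}, classify their isotropic simple roots through signed $\ep\delta$-sequences, transport distinguished sets to $\mf b^{\st}$ by odd reflections and Lemma \ref{lem:T:set:Atype}, and use $\sigma$ together with \eqref{eq:sigma twist} to handle $s(\mf b)=-1$. Your Case (i) is essentially the paper's argument; the only soft spot is the appeal to an unproven ``extension of Lemma \ref{lem:T:set:Atype}'' allowing $\delta_n+\ep_m$ to ride along in $T$ during the reflections. This is avoidable: since $\la_{n+1}<m$ gives $(\la^{\mf b}+\rho^{\mf b},\delta_n+\ep_m)=0\iff(\la^{\mf b}+\rho^{\mf b},\delta_n-\ep_m)=0$, one substitutes $\delta_n-\ep_m$ for $\delta_n+\ep_m$ in the distinguished set \emph{before} transporting (this is what the paper does), after which Lemma \ref{lem:T:set:Atype} applies verbatim. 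Stating the extension without proof leaves a hole, because in Case (3) of that lemma the reflection root $\alpha=\pm(\delta_n-\ep_j)$ is never orthogonal to $\delta_n+\ep_m$, so the replacement rules there do not carry over unchanged.

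The genuine gap is the necessity direction of Case (ii), which you yourself flag as the ``main obstacle'' and then dispose of by asserting that ``the rigidity of the forked end rules out $k\ge 2$ and any root other than some $\delta_i+\ep_m$.'' That is the conclusion, not an argument: a priori $\la^\natural+\rho^{\st}$ could have $a_i=b_j$ for some $j<m$, giving an atypical isotropic root $\delta_i+\ep_j$ unrelated to $\ep_m$, and nothing in your sketch excludes it. The paper's actual mechanism is concrete and worth recording. First, $\la_{n+1}=m$ forces $(\la^\natural+\rho^{\st},\delta_i-\ep_j)\neq 0$ for all $i,j$. Hence: (a) if the $\ep\delta$-sequence of $\mf b$ ends with an $\ep$, every odd reflection taking $\mf b^{\st}$ to $\mf b$ is at a \emph{typical} root $\pm(\delta_i-\ep_j)$, so by \eqref{eq:odd reflection rho} the shifted weight never moves, $\la^{\mf b}+\rho^{\mf b}=\la^\natural+\rho^{\st}$, and no simple root of the form $\pm(\delta_i-\ep_j)$ can lie in $T_{\la^{\mf b}}$; the only remaining isotropic simple root is $\delta_n+\ep_m$ (sequence ending $\delta\ep$), which forces $T=\{\delta_n+\ep_m\}$ and $k=1$; (b) a sequence ending in $\delta$ with $s(\mf b)=1$ leads, via Lemma \ref{lem:T:set:Atype}, to an atypical root $\delta_i-\ep_j$ for $\la^\natural+\rho^{\st}$, a contradiction; (c) for $s(\mf b)=-1$ one passes to $L(\la^\natural_-)$ and $\sigma(\mf b)$, transports $T$ to $\mf b^{\st}$, and uses the sign pattern $(\la_-^\natural+\rho^{\st},\delta_i)<0$, $(\la_-^\natural+\rho^{\st},\ep_j)>0$ for $j<m$ to conclude $T=\{\pm(\delta_i-\ep_m)\}$ and $k=1$. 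None of this sign and shifted-weight bookkeeping appears in your proposal, so as written the ``only if'' half of (ii) is not established.
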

\begin{proof}
Suppose that $(\la^\natural,\ep_m)=0$ and hence $\la^\natural =\la^\natural_-$.

If $L(\la^\natural)$ is tame with respect to a Borel subalgebra corresponding to an $\epsilon\delta$-sequence ending with an $\epsilon$, then the condition $\la_{n+1}<m$ implies that $(\la^{\mf b}+\rho^{\mf b},\delta_n+\epsilon_m)=0$ if and only if $(\la^{\mf b}+\rho^{\mf b},\delta_n-\epsilon_m)=0$. Thus, if $\delta_n+\epsilon_m$ is in the distinguished set, we can always replace it by $\delta_n-\epsilon_m$, and still get a distinguished set. Thus,  by the classification of Borel subalgebras in Section \ref{sec:Borels D} we can find a distinguished set consisting of odd roots of the form $\pm(\delta_i-\ep_j)$. Since we can transform such a $\mf b$ to $\mf b^{\texttt{st}}$ via a sequence of odd reflections corresponding to $\pm(\delta_i-\ep_j)$, we can apply Lemma \ref{lem:T:set:Atype} to prove that Condition $(\underline{T})$ is satisfied. Now, the equivalence of Condition $(\underline{T})$ is established as in the proof of Theorem \ref{thm:classification B}.

On the other hand, if $L(\la^\natural)$ is tame with respect to a Borel subalgebra corresponding to an $\epsilon\delta$-sequence ending with a $\delta$, it follows from \eqref{eq:sigma twist} and Lemma \ref{lem:sigma tame} that $L(\la^\natural)$ is tame if and only if it is tame with respect to a Borel subalgebra ${\mf b}$ with $s({\mf b})=1$. We can again transform such a $\mf b$ to $\mf b^{\texttt{st}}$ via a sequence of odd reflections as in Lemma \ref{lem:T:set:Atype}. Thus, the same argument as above is again applicable.

Next, consider the case $(\la^\natural,\ep_m)>0$. Since $\la_{n+1}= m$, we have $(\la^\natural + \rho^{{\mf b}^{\texttt{st}}},\delta_i-\epsilon_j)\neq 0$ for all $i, j$.

Suppose first that $L(\la^\natural)$ is tame with respect to a Borel subalgebra $\mf b$ corresponding to an $\epsilon\delta$-sequence ending with an $\epsilon$. Since $\mf b$ is obtained from $\mf b^{\texttt{st}}$ by a sequence of odd reflections corresponding to $\pm(\delta_i-\epsilon_j)$, and $(\la^\natural + \rho^{{\mf b}^{\texttt{st}}},\delta_i-\epsilon_j)\neq 0$ for all $i, j$, we see that  $\la^{\mf b}+\rho^{\mf b}=\la^\natural+\rho^{\mf b^{\texttt{st}}}$, and hence $(\la^{\mf b} + \rho^{{\mf b}},\delta_i-\epsilon_j)\neq 0$, for all $i, j$. This necessarily implies that the $\epsilon\delta$-sequence for $\mf b$ ends with $\delta\epsilon$, and $(\la^{\mf b}+\rho^{\mf b},\delta_n+\epsilon_m)=0$ with $k=1$. Hence $(\la^\natural + \rho^{{\mf b}^{\tt st}}, \delta_n+\ep_m)=(\la^{\mf b}+\rho^{\mf b},\delta_n+\epsilon_m)=0$, establishing (ii) in this case.


Next, suppose that $L(\la^\natural)$ is tame with respect to a Borel subalgebra ${\mf b}$ corresponding to an $\epsilon\delta$-sequence ending with a $\delta$. If $s({\mf b})=1$, then by the same argument as in the case of $\la_{n+1}<m$, there exists an isotropic root $\beta$ of the form $\delta_i-\epsilon_j$, which is orthogonal to $\la^\natural + \rho^{{\mf b}^{\texttt{st}}}$. This is a contradiction. So, $L(\la^\natural)$ is tame with respect to a Borel subalgebra ${\mf b}$ with $s({\mf b})=-1$. By Lemma \ref{lem:sigma tame}, $L(\la_-^\natural)$ is tame with respect to ${\mf b}'=\sigma({\mf b})$ with $s({\mf b}')=1$, where any simple odd root in $T_{\la^{{\mf b}'}_-}$ is of the form $\pm(\delta_i-\ep_j)$.
Since $s(\mf b')=1$, we can use a sequence of odd reflections with respect to odd simple roots of the form $\pm(\delta_i-\epsilon_j)$ to transform $\mf b'$ into $\mf b^{\texttt{st}}$. By Lemma \ref{lem:T:set:Atype}, since $\la^{{\mf b}'}_-+\rho^{\mf b'}$ has a set of orthogonal isotropic roots (of cardinality equal to the degree of atypicality) of the form $\{\pm(\delta_i-\epsilon_j)\}$, the same is true for the shifted weight $\la^\natural_-+\rho^{{\mf b}^\texttt{st}}$. Since $\mf b'$ is a Borel subalgebra of $\G$ with $\mf b'_\even=(\mf b^\st)_\even$, and $(\la_-^\natural + \rho^{{\mf b}^{\texttt{st}}},\delta_i)<0$ for $1\leq i\leq n$ and $(\la_-^\natural + \rho^{{\mf b}^{\texttt{st}}},\ep_j)>0$ for $1\leq j\leq m-1$, we have $T_{\la^{{\mf b}'}_-}=\{\pm(\delta_i-\ep_m)\}$ for some $i$, and in particular, the degree of atypicality is $1$. Hence Condition (ii) holds.

Similarly, we can show that Condition (ii) is sufficient for $L(\la^\natural)$ to be tame.
\end{proof}

{Based on \cite{SV2}, we put for $\lambda\in \mc{H}(n|m)$
\begin{equation}\label{eq:e(lambda)}
\begin{split}
i(\lambda')&= \max\{\,i\,|\, 1\leq i\leq m,\ \lambda'_i-i+m-n\geq 0\,\},\\
i^\ast(\lambda')&= \max\{\,i\,|\,1\leq i\leq m,\ \lambda'_i-i+m-n> 0\,\}, \\
e(\lambda)&= i(\lambda')-i^\ast(\lambda').
\end{split}
\end{equation}
Here we assume that $i(\la')=0$ and $i^\ast(\la')=0$ if there is no $i$ such that $\lambda'_i-i+m-n\geq 0$ and} $\lambda'_i-i+m-n> 0$, {respectively. It is straightforward to check that $e(\lambda)=0$ or $1$.}

\subsection{Kac-Wakimoto conjecture}\label{sec:KW formula}

The following conjecture of Kac and Wakimoto goes back to \cite{KW1}, and is explicitly stated in \cite[Conjecture 3.6]{KW2}.

\begin{conjecture}\label{conj:KW}(Kac-Wakimoto)
{\it Let $\G$ be a finite-dimensional basic Lie superalgebra and let $V$ be a finite-dimensional irreducible $\G$-module that is tame with respect to a Borel subalgebra $\mf b$ of highest weight $\gamma$ with a distinguished set $T_{\gamma}\subseteq\Pi_{\mf b}$.
Then we have $V=L_{\mf b}(\G,\gamma)$ and there exists an integer $j_{\gamma}$ such that the following character formula holds for $V$:
\begin{align}\label{formula:kw}
{\rm ch}V=\frac{1}{j_{\gamma}} D_{\mf b}^{-1} \sum_{w\in W}(-1)^{\ell(w)} w\left( \frac{e^{{\gamma}+\rho^{\mf b}}}{\prod_{\beta\in T_{\gamma}}(1+e^{-\beta})} \right).
\end{align}}
\end{conjecture}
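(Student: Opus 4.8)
The plan is to prove Conjecture~\ref{conj:KW} for $\G=\osp(\ell|2n)$ --- that is, to establish Theorem~\ref{thm:main result} --- by expressing the character of a tame module as an Euler characteristic of a parabolic Verma module and then invoking the Gruson--Serganova ``typical lemma'' (Lemma~\ref{lem:GS}). Since tameness is a $W$-conjugacy invariant by Proposition~\ref{prop:reduction}, I would first reduce to Borel subalgebras $\mf b$ with $\mf b_\even=(\mf b^{\st})_\even$. By Theorems~\ref{thm:classification B} and~\ref{thm:classification D} the tame highest weights are then exactly those whose atypicality is of ``type $A$'', so the distinguished set $T_\gamma$ may be taken to consist of roots $\delta_{i_r}-\ep_{j_r}$ (the sole exception being the $\osp(2m|2n)$ atypicality-one case involving $\delta_i+\ep_m$, which I would dispose of via the $\sigma$-twist of \eqref{eq:sigma twist} together with Lemma~\ref{lem:sigma tame}). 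It is natural to work first with the Borel $\mf b^{\texttt{odd}}$ of~\cite{GS} carrying the maximal number of odd isotropic simple roots, where the highest weight is governed by Frobenius coordinates through \eqref{eq:aux004} and \eqref{eq:aux001}.

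Next I would dispose of the trivial module: verify \eqref{formula:kw} for $V=\C$ directly by a denominator-identity computation, which also pins down the constant $j_\gamma$ in that case (Proposition~\ref{prop:KW:trivial}). Then, by Proposition~\ref{prop:Euler=KW}, knowing the Kac--Wakimoto formula for the trivial module already forces that for a suitable parabolic $\mf p=\mf l+\mf u$ the Euler characteristic $\mc E(\operatorname{Ind}_{\mf p}^\G L(\mf l,\gamma))$ has character equal to the right-hand side of \eqref{formula:kw}: one selects $\mf l$ so that the odd roots outside $T_\gamma$ are absorbed into $\mf l$ and the sum in Proposition~\ref{form:Euler} (equivalently Remark~\ref{form:Euler:1}) collapses onto the Kac--Wakimoto sum over $T_\gamma$.

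The crux is then to show that this Euler characteristic actually equals $\operatorname{ch}V$ for a tame $V=L_{\mf b}(\G,\gamma)$, i.e.\ that $\mc L_i(\operatorname{Ind}_{\mf p}^\G L(\mf l,\gamma))=0$ for $i>0$ and that $\mc L_0$ is irreducible. This is exactly what Lemma~\ref{lem:GS} delivers, provided its three hypotheses hold: admissibility of $\mf p$ for the central character $\chi_\gamma$; that $\mf l$ contains a maximal orthogonal set of isotropic odd simple roots orthogonal to $\gamma+\rho^{\mf b}$; and the strict inequality $(\gamma+\rho^{\mf b},2\beta/(\beta,\beta))>0$ for all even roots $\beta$ of the nilradical. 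Here Section~\ref{sec:hw:tame} is the engine: Theorem~\ref{thm:KW:bottom} asserts that tame highest weights are essentially ``bottoms'' of the partial order $\preccurlyeq$, which is precisely what forces admissibility and the positivity condition, while Proposition~\ref{prop:aux01} is the technical bridge needed to still apply Lemma~\ref{lem:GS} in the delicate $\ell$ even, atypicality-one situation where the ``bottom'' description is slightly imperfect.

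Finally, having established \eqref{formula:kw} with respect to $\mf b^{\texttt{odd}}$, I would propagate it to an arbitrary Borel with $\mf b_\even=(\mf b^{\st})_\even$ by tracking how both sides transform under a chain of odd reflections along roots $\pm(\delta_i-\ep_j)$, using \eqref{eq:odd reflection rho} and Lemma~\ref{lem:T:set:Atype}, and then to all Borels by Proposition~\ref{prop:reduction}. I expect the main obstacle to be the verification of the admissibility hypothesis of Lemma~\ref{lem:GS} --- controlling every finite-dimensional irreducible in the linkage class of $\gamma$ and arranging that $\mf p$ can be chosen admissible --- together with the correct determination of $j_\gamma$ across the $\ell$ even versus odd cases (the subtle factor of $2$ in $j_\gamma$ noted in the acknowledgment), both of which demand a careful case analysis of the hook-partition and Frobenius-coordinate combinatorics.
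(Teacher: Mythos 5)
Your outline reproduces the paper's own strategy essentially step for step: reduction to $W$-conjugacy classes via Proposition~\ref{prop:reduction}, the classification Theorems~\ref{thm:classification B} and~\ref{thm:classification D} (with the $\delta_i+\ep_m$ case handled by the $\sigma$-twist), the trivial-module case (Proposition~\ref{prop:KW:trivial}) feeding into Proposition~\ref{prop:Euler=KW}, verification of the hypotheses of Lemma~\ref{lem:GS} through Theorem~\ref{thm:KW:bottom} and Proposition~\ref{prop:aux01} with $\mf b=\mf b^{\texttt{odd}}$, and the final propagation to arbitrary Borel subalgebras by odd reflections. This is exactly how Theorem~\ref{thm:main result} is proved in the paper, so the plan is correct and takes the same route.
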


It is well-known that when $V$ is typical, Formula \eqref{formula:kw} holds for arbitrary Borel subalgebra ${\mf b}$ with $T_{\gamma}=\emptyset$ and $j_{\gamma}=1$ by a classical result of Kac \cite[Proposition 2.8]{K2}.
In the case when $\G=\gl(n|m)$, Conjecture \ref{conj:KW} was established by Chmutov, Hoyt, and Reif in \cite[Theorem 42]{CHR}. A main ingredient in their proof is the character formula by Su and Zhang for Kostant modules \cite[Corollary 4.13]{SZ1}.  In particular, the validity of the conjecture in the $\gl(n|m)$-case and Theorem \ref{thm:CHR1} imply the following.

\begin{thm}\label{thm:CHR2}\cite{CHR}
For $\la\in\mc{H}(n|m)$, Formula \eqref{formula:kw} holds for $L(\gl(n|m),\la^\natural)$, which is tame with respect to a Borel subalgebra ${\mf b}$ of highest weight $\la^{\mf b}$ with a distinguished set $T_{\la^\mf b}$. Moreover, $j_{\la^{{\mf b}}}=|T_{\la^{{\mf b}}}|!$.
\end{thm}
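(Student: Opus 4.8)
The plan is to read the statement off from Theorem~\ref{thm:CHR1} and the resolution of Conjecture~\ref{conj:KW} for $\gl(n|m)$ in \cite{CHR}, the only extra content being the identification of the constant $j_{\la^{\mf b}}$. First, by Theorem~\ref{thm:CHR1} the module $L(\gl(n|m),\la^\natural)$ is tame, so there exist a Borel subalgebra $\mf b$, a $\mf b$-highest weight $\la^{\mf b}$ with $L_{\mf b}(\gl(n|m),\la^{\mf b})\cong L(\gl(n|m),\la^\natural)$, and a distinguished set $T_{\la^{\mf b}}\subseteq\Pi_{\mf b}$ of cardinality $k$, the degree of atypicality (conditions (T1)--(T2)). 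Applying \cite[Theorem~42]{CHR}, which establishes Conjecture~\ref{conj:KW} in the $\gl(n|m)$-case, we conclude that ${\rm ch}\,L(\gl(n|m),\la^\natural)$ equals the right-hand side of \eqref{formula:kw} for some positive integer $j_{\la^{\mf b}}$.

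It then remains to show $j_{\la^{\mf b}}=k!$. Here I would invoke the other ingredient of \cite{CHR}: by \cite[Theorem~20]{CHR} the module $L(\gl(n|m),\la^\natural)$ is a Kostant module in the sense of \cite{BS}, whose character is given by the closed formula of Su and Zhang \cite[Corollary~4.13]{SZ1}. Comparing that formula with the right-hand side of \eqref{formula:kw} pins down $j_{\la^{\mf b}}$; this comparison is exactly the computation carried out in the proof of \cite[Theorem~42]{CHR}, and it returns the value $j_{\la^{\mf b}}=|T_{\la^{\mf b}}|!=k!$.

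Alternatively, and more self-contained, one can extract $j_{\la^{\mf b}}$ directly. Using that $D_{\mf b,\odd}$ is $W$-invariant (cf.\ Remark~\ref{form:Euler:1}) and that the factors $1+e^{-\beta}$ with $\beta\in T_{\la^{\mf b}}$ cancel against the corresponding factors of $D_{\mf b,\odd}$, the right-hand side of \eqref{formula:kw} multiplied by $j_{\la^{\mf b}}$ rewrites as
\begin{equation*}
D_{\mf b,\even}^{-1}\sum_{w\in W}(-1)^{\ell(w)}\,w\!\left(e^{\la^{\mf b}+\rho^{\mf b}_{\even}}\prod_{\beta\in\Phi^+_{\mf b,\odd}\setminus T_{\la^{\mf b}}}\bigl(1+e^{-\beta}\bigr)\right).
\end{equation*}
Expanding the product over subsets $S\subseteq\Phi^+_{\mf b,\odd}\setminus T_{\la^{\mf b}}$ and applying the Weyl character formula to each summand, the coefficient of $e^{\la^{\mf b}}$ becomes a signed sum of weight multiplicities of finite-dimensional $\G_{\even}$-modules, which one evaluates by Kostant's multiplicity formula. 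Since \cite[Theorem~42]{CHR} already guarantees that this coefficient is a positive integer and that the whole expression is an integer multiple of ${\rm ch}\,L(\gl(n|m),\la^\natural)$, it suffices to carry out this evaluation; this combinatorial bookkeeping is the only real obstacle, and the outcome $k!$ matches the count in \cite{CHR}.
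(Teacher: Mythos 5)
Your proposal matches the paper exactly: Theorem~\ref{thm:CHR2} is presented there as an immediate consequence of Theorem~\ref{thm:CHR1} (tameness of polynomial modules) combined with \cite[Theorem 42]{CHR}, the value $j_{\la^{\mf b}}=|T_{\la^{\mf b}}|!$ being the constant produced in \cite{CHR} via comparison with the Su--Zhang formula, and no independent proof is given in the paper. Your first two paragraphs are precisely this citation argument; the self-contained computation sketched in your final paragraph is unnecessary for the claim and is not carried out in the paper either.
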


Now, let us consider the case when $\G=\osp(\ell|2n)$. We fix a $\la\in\mc H(n|m)$ such that $L(\la^\natural)$ is tame of degree of atypicality $k\geq 1$.

Suppose that $\G=\osp(2m+1|2n)$. Let
\begin{equation}\label{eq:borel levi odd-1}
\mf b=\mf b^{\texttt{odd}}, \ \ \ \mf{l}=\h+\osp(2k+1|2k), \ \ \ \underline{\mf b}=\mf b\cap \mf l,
\end{equation}
where  $\mf b^{\texttt{odd}}$ is the Borel subalgebra given in Section \ref{sec:osp def}, and $\osp(2k+1|2k)$ is the subalgebra corresponding to the subdiagram of the Dynkin diagram for $\mf b^{\texttt{odd}}$ consisting of the right-most $2k$ nodes as in \cite{GS,SV2}. The weight $\la^\mf b$ may now be regarded as a weight for the reductive Lie superalgebra $\mf l$.

\begin{lem}\label{lem:htwt:bodd-1}
Let $\G=\osp(2m+1|2n)$ and let $\la\in\mc{H}(n|m)$ such that $L(\la^\natural)$ is tame of degree of atypicality $k\ge 1$.
Under the hypothesis \eqref{eq:borel levi odd-1}, we have the following:
\begin{itemize}
\item[(1)] We have  $$\la^\mf b+\rho^\mf b=\sum_{i=1}^{n-k}a_i\delta_i+\sum_{j=1}^{m-k}b_j\ep_j- \sum_{i=m-k+1}^m\hf\ep_i+\sum_{j=n-k+1}^n\hf\delta_j$$
for some $a_1>\cdots>a_{n-k}>0$ and $b_1>\ldots>b_{m-k}>0$.

\item[(2)] $L(\mf l,\la^{{\mf b}})$ is one-dimensional, which is the trivial module when restricted to the subalgebra $\osp(2k+1|2k)$.
\item[(3)] $L(\la^\natural)$ is tame with respect to ${\mf b}$ with a distinguished set
\begin{equation*}\label{eq:T_lambda-1}
T_{\la^{{\mf b}}}=\{\,\ep_{m-k+i}-\delta_{n-k+i}\,|\, i=1,\ldots,k\,\}.
\end{equation*}
Furthermore, $T_{\la^{{\mf b}}}$ is also a distinguished set for the tame module $L(\mf l,\la^{{\mf b}})$ with respect to $\underline{\mf b}$.
\end{itemize}
\end{lem}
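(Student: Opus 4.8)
The plan is to establish part~(1), from which (2) and (3) follow with little extra work. Let $\mf l_0=\gl(n|m)$ be the general linear subalgebra of $\G$ attached to the subdiagram \eqref{ABC:diagram:A1} of \eqref{ABC:diagram:B1}, and let $\widetilde{\mf b}=\mf b^{\texttt{odd}}\cap\mf l_0$; its $\ep\delta$-sequence is the maximally alternating sequence underlying the diagram for $\mf b^{\texttt{odd}}$ recalled in Section~\ref{sec:osp def}. By \eqref{eq:aux004} we have $\la^{\mf b}=\la^{\widetilde{\mf b}}$, and by \eqref{frob:ht:wt} the $\delta_i$- and $\ep_j$-coefficients of $\la^{\widetilde{\mf b}}$ are the Frobenius coordinates $p_i\ge 0$, $q_j\ge 0$ of $\la$ relative to this alternating sequence, which for an alternating sequence are just the arm- and leg-lengths of $\la$ truncated below at $0$. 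On the other side, I would compute $\rho^{\mf b^{\texttt{odd}}}$ explicitly in each of the three cases $m=n$, $m>n$, $n>m$ by applying to $\rho^{\mf b^{\texttt{st}}}$ in \eqref{eq:rhost} the string of odd reflections that carries $\mf b^{\texttt{st}}$ to $\mf b^{\texttt{odd}}$: each such reflection is at an isotropic odd simple root $\alpha$ of the form $\pm(\delta_i-\ep_j)$, and since the trivial module has highest weight $0$ with respect to every Borel, \eqref{eq:odd reflection rho} shows that $\rho\mapsto\rho+\alpha$ at each step. One checks that the result has $\ep_j$-coefficient $-\hf$ on the last $k$ slots and $\delta_i$-coefficient $+\hf$ on the last $k$ slots.

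The combinatorial core of part~(1), and the step I expect to be the main obstacle, is to show that tameness of $L(\la^\natural)$ forces $p_i=0$ for $i>n-k$ and $q_j=0$ for $j>m-k$. Here I would invoke Theorem~\ref{thm:classification B}: tameness of degree of atypicality $k$ yields $k$ mutually orthogonal isotropic roots $\delta_{i_r}-\ep_{j_r}$ orthogonal to $\la^\natural+\rho^{\mf b^{\texttt{st}}}$, and, rewritten in terms of the weight-diagram data of $\la$ together with the fact that $k$ is the full degree of atypicality (so no further collisions occur), this says exactly that the innermost $k$ arms and legs of $\la$ vanish; the module $L(\mf l_0,\la^\natural)$ being tame as well by Theorem~\ref{thm:CHR1} keeps the $\gl(n|m)$- and $\osp$-pictures in step. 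Granting this, adding $\rho^{\mf b^{\texttt{odd}}}$ to $\la^{\mf b}$ gives the asserted shape: on the last $k$ of the $\delta$- and $\ep$-slots only $\rho^{\mf b^{\texttt{odd}}}$ contributes, yielding $+\hf$ and $-\hf$, while on each remaining slot the coefficient is $p_i$ (resp.\ $q_j$) plus the corresponding coefficient of $\rho^{\mf b^{\texttt{odd}}}$, and these sequences are strictly decreasing and positive precisely because all atypicality collisions have been absorbed into the tail; this gives $a_1>\cdots>a_{n-k}>0$ and $b_1>\cdots>b_{m-k}>0$. Part~(2) is then immediate: by (1) the weight $\la^{\mf b}$ has no $\delta_{n-k+1},\dots,\delta_n,\ep_{m-k+1},\dots,\ep_m$ component, i.e.\ it vanishes on the Cartan subalgebra of the $\osp(2k+1|2k)$-factor of $\mf l$, so $L(\mf l,\la^{\mf b})$ restricts to the trivial module on $\osp(2k+1|2k)$, is one-dimensional, and has degree of atypicality $k$.

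Finally, for part~(3): reading off the Dynkin diagrams of $\mf b^{\texttt{odd}}$ in Section~\ref{sec:osp def}, the roots $\ep_{m-k+i}-\delta_{n-k+i}$, $i=1,\dots,k$, are precisely the isotropic odd simple roots of $\mf b^{\texttt{odd}}$ lying in the $\osp(2k+1|2k)$-subdiagram, so $T_{\la^{\mf b}}\subseteq\Pi_{\mf b^{\texttt{odd}}}$, and a one-line computation with the bilinear form shows they are pairwise orthogonal. For any Borel $\mf b$ and any isotropic odd simple root $\beta\in\Pi_{\mf b}$ one has $(\rho^{\mf b},\beta)=\hf(\beta,\beta)=0$; combined with $(\la^{\mf b},\ep_{m-k+i}-\delta_{n-k+i})=0$ from part~(2), this gives $(\la^{\mf b}+\rho^{\mf b},\beta)=0$ for all $\beta\in T_{\la^{\mf b}}$, so $L(\la^\natural)$ is tame with respect to $\mf b^{\texttt{odd}}$ with distinguished set $T_{\la^{\mf b}}$. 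The identical argument with $\underline{\mf b}=\mf b^{\texttt{odd}}\cap\mf l$ in place of $\mf b^{\texttt{odd}}$ --- the $\ep_{m-k+i}-\delta_{n-k+i}$ are still isotropic odd simple roots of $\underline{\mf b}$ and $\rho^{\underline{\mf b}}$ is again a Weyl vector --- shows that $T_{\la^{\mf b}}$ is a distinguished set for the tame module $L(\mf l,\la^{\mf b})$ with respect to $\underline{\mf b}$. Apart from the Frobenius-coordinate analysis underlying the vanishing of the innermost arms and legs, everything here is formal.
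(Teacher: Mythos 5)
Your proposal is correct and follows essentially the same route as the paper: compute $\rho^{\mf b^{\texttt{odd}}}$ by odd reflections from $\rho^{\mf b^{\texttt{st}}}$, read off $\la^{\mf b}$ from the Frobenius coordinates via \eqref{eq:aux004}, and use Theorem \ref{thm:classification B} (the paper delegates this combinatorial core to \cite[Proposition 3.21]{CW}) to force the last $k$ coordinates on each side to vanish, after which (2) and (3) are formal. One small slip in part (3): the roots $\ep_{m-k+i}-\delta_{n-k+i}$ are not \emph{all} of the isotropic odd simple roots in the $\osp(2k+1|2k)$-subdiagram (there are $2k-1$ of those), but only the $k$ mutually orthogonal ones of the form $\ep-\delta$; this does not affect your argument.
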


\begin{proof}
{Computing $\rho^{\mf b}$ by \eqref{eq:odd reflection rho} and then using \eqref{eq:aux004}, we see from Theorem \ref{thm:classification B} and \cite[Proposition 3.21]{CW} that (1) holds.}  In particular, (1) implies (2) (see also \cite[p.~4306]{SV2}) and (3).
\end{proof}

Suppose that $\G=\osp(2m|2n)$. If $\la_{n+1}<m$, then we put {
\begin{equation}\label{eq:borel levi odd-2}
\mf b=\mf b^{\texttt{odd}}, \ \ \
\mf{l}=
\begin{cases}
\h+\osp(2k|2k),& \text{if $e(\la)=0$,} \\
\h+\osp(2k+2|2k), & \text{if $e(\la)=1$,}
\end{cases}
\ \ \  \underline{\mf b}=\mf b\cap \mf l,
\end{equation}
where  $\mf b^{\texttt{odd}}$ is as in Section \ref{sec:prelim}, and $\osp(2k|2k)$ (respectively, $\osp(2k+2|2k)$) is the subalgebra corresponding to the subdiagram with the right-most $2k$ (respectively, $2k+1$) nodes of the Dynkin diagram for $\mf b^{\texttt{odd}}$ \cite{GS,SV2}.}

If $\la_{n+1}=m$, then by Theorem \ref{thm:classification D} there exists an isotropic root of the form $\beta=\delta_{i}+\ep_{m}$ for some $1\leq i\leq n$ such that $(\la^\natural+\rho^{\mf b^\st},\beta)=0$. We take $\mf b$ to be the Borel subalgebra corresponding to the following Dynkin diagram:
\begin{center}
\vskip -0.3cm
\begin{equation}\label{ABC:diagram:D7}
\hskip -2.5cm \setlength{\unitlength}{0.16in}
\begin{picture}(24,1)
\put(3.2,0.5){\makebox(0,0)[c]{$\bigcirc$}}
\put(5.7,0.5){\makebox(0,0)[c]{$\bigcirc$}}
\put(8.2,0.5){\makebox(0,0)[c]{$\cdots$}}
\put(10.4,0.5){\makebox(0,0)[c]{$\bigotimes$}}
\put(12.6,0.45){\makebox(0,0)[c]{$\bigcirc$}}
\put(14.85,0.5){\makebox(0,0)[c]{$\bigcirc$}}
\put(17.15,0.5){\makebox(0,0)[c]{$\cdots$}}
\put(19.3,0.5){\makebox(0,0)[c]{$\bigotimes$}}
\put(24.2,0.5){\makebox(0,0)[c]{$\cdots$}}
\put(21.8,0.5){\makebox(0,0)[c]{$\bigotimes$}}
\put(26.6,0.5){\makebox(0,0)[c]{$\bigcirc$}}
\put(28.8,0.5){\makebox(0,0)[c]{$\bigcirc$}}
\put(3.6,0.5){\line(1,0){1.6}}
\put(6.1,0.5){\line(1,0){1.4}} \put(8.8,0.5){\line(1,0){1.2}}
\put(10.8,0.5){\line(1,0){1.4}} \put(13.1,0.5){\line(1,0){1.2}}
\put(15.28,0.5){\line(1,0){1}} \put(17.7,0.5){\line(1,0){1.1}}
\put(19.7,0.5){\line(1,0){1.55}}
\put(24.85,0.5){\line(1,0){1.3}}
\put(22.2,0.5){\line(1,0){1.3}}
\put(26.95,0.25){$\Longleftarrow$}
\put(3,-0.5){\makebox(0,0)[c]{\tiny $\delta_1-\delta_{2}$}}
\put(5.7,-0.5){\makebox(0,0)[c]{\tiny $\delta_2-\delta_{3}$}}
\put(10.3,1.5){\makebox(0,0)[c]{\tiny $\delta_{i-1}-\ep_{1}$}}
\put(12.3,-0.5){\makebox(0,0)[c]{\tiny $\ep_1-\ep_2$}}
\put(15.2,-0.5){\makebox(0,0)[c]{\tiny $\ep_1-\ep_2$}}
\put(19,-0.5){\makebox(0,0)[c]{\tiny $\ep_{m-1}-\delta_i$}}
\put(21.8,1.5){\makebox(0,0)[c]{\tiny $\delta_{i}+\ep_m$}}
\put(26.2,-0.5){\makebox(0,0)[c]{\tiny $\delta_{n-1}-\delta_{n}$}}
\put(28.75,-0.5){\makebox(0,0)[c]{\tiny $2\delta_{n}$}}
\end{picture}
\end{equation}
\vskip 0.5cm
\end{center}
That is, the corresponding numbered $\ep\delta$-sequence is of the form
\begin{align*}
\delta_1\delta_2\cdots\delta_{i-1}\ep_1\ep_2\cdots\ep_{m-1}\delta_i(-\ep_m)\delta_{i+1}\cdots\delta_n.
\end{align*}
We let
\begin{align}\label{eq:borel levi odd-3}
\mf{l}=\h+\mf{sl}(1|1),\quad \underline{\mf b}=\mf b\cap\mf l,
\end{align}
where $\mf{sl}(1|1)$ is the Lie subalgebra of $\G$ generated by the root vector corresponding to $\pm(\delta_i+\ep_m)$. 

\begin{lem}\label{lem:htwt:bodd-2}
Let $\G=\osp(2m|2n)$, $m\ge 2$, and let $\la\in\mc{H}(n|m)$ such that $L(\la^\natural)$ is tame of degree of atypicality $k\geq 1$.
Under the hypothesis \eqref{eq:borel levi odd-2} and \eqref{eq:borel levi odd-3}, we have the following.
\begin{itemize}
\item[(1)] If $\la_{n+1}<m$, then we have
$$\la^\mf b+\rho^\mf b=\sum_{i=1}^{n-k}a_i\delta_i+\sum_{j=1}^{m-k}b_j\ep_j,$$
{for some $a_1>\cdots>a_{n-k}> 0$ and $b_1>\ldots>b_{m-k}\geq 0$. Furthermore, $b_{m-k}=0$ if and only if $e(\la)=1$.}

\noindent If $\la_{n+1}=m$, then
$$\la^\mf b+\rho^\mf b=\sum_{i=1}^{n}a_i\delta_i+\sum_{j=1}^{m}b_j\ep_j, $$
for some $a_1>\cdots>a_{n}>0$ and $b_1>\ldots>b_{m}>0$ and $a_i=b_m$.

\item[(2)] $L(\mf l,\la^{{\mf b}})$ is one-dimensional, and it is the trivial module when restricted to the subalgebras $\osp(2k+2e(\la)|2k)$ and $\mf{sl}(1|1)$ in the cases when $\la_{n+1}<m$ and $\la_{n+1}=m$, respectively.
\item[(3)] $L(\la^\natural)$ is tame with respect to ${\mf b}$ with a distinguished set
\begin{equation*}\label{eq:T_lambda-2}
T_{\la^{{\mf b}}}=
\begin{cases}
\{\,\delta_{n-k+i}-\ep_{m-k+i}\,|\, i=1,\ldots,k\,\}, &\text{if $\la_{n+1}<m$}, \\
\{\,\delta_i + \ep_{m}\,\}, &\text{if $\la_{n+1}=m$}.
\end{cases}
\end{equation*}
Furthermore, $T_{\la^{{\mf b}}}$ is also a distinguished set for the tame module $L(\mf l,\la^{{\mf b}})$ with respect to $\underline{\mf b}$.
\end{itemize}
\end{lem}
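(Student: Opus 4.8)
The plan is to mirror the (one-line) proof of Lemma~\ref{lem:htwt:bodd-1}: first establish (1) by computing $\la^\mf b$ and $\rho^\mf b$ explicitly, and then read (2) and (3) off the shape of the answer. The computation splits according to the two cases in the hypotheses \eqref{eq:borel levi odd-2}--\eqref{eq:borel levi odd-3}.

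Suppose first that $\la_{n+1}<m$, so that $L(\la^\natural)=L(\la^\natural_-)$ and $\mf b=\mf b^{\texttt{odd}}$. The $\ep\delta$-sequence attached to $\mf b^{\texttt{odd}}$ for $\osp(2m|2n)$ ends with an $\ep$, so \eqref{eq:aux001} gives $\la^\mf b=\la^{\underline{\mf b}}$ in terms of the Frobenius coordinates of $\la$ relative to $\mf b^{\texttt{odd}}$ as in \eqref{frob:ht:wt}, while $\rho^\mf b$ is obtained from $\rho^{\mf b^{\texttt{st}}}$ in \eqref{eq:rhost} by applying \eqref{eq:odd reflection rho} with $\gamma=0$ along a chain of odd reflections carrying $\mf b^{\texttt{st}}$ to $\mf b^{\texttt{odd}}$ (each step shifting $\rho$ by the reflecting isotropic root). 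Summing the two and invoking the tameness hypothesis through Theorem~\ref{thm:classification D}(i), i.e.\ Condition $(\underline{T})$, together with \cite[Proposition 3.21]{CW}, one checks that the $k$ atypical Frobenius pairs of $\la$ are aligned so that the coefficients of $\la^\mf b+\rho^\mf b$ along $\delta_{n-k+1},\dots,\delta_n,\ep_{m-k+1},\dots,\ep_m$ all vanish, leaving $\la^\mf b+\rho^\mf b=\sum_{i=1}^{n-k}a_i\delta_i+\sum_{j=1}^{m-k}b_j\ep_j$ with strictly decreasing coefficients; the surviving coefficient $b_{m-k}$ equals $0$ precisely when the definition \eqref{eq:e(lambda)} of $e(\la)$ records a boundary equality, i.e.\ $e(\la)=1$. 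This is the bookkeeping carried out in \cite[p.~4306]{SV2}, and it gives (1) in this case.

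Suppose next that $\la_{n+1}=m$. By Theorem~\ref{thm:classification D}(ii) we have $k=1$ and there is an index $i$ with $(\la^\natural+\rho^{\mf b^{\texttt{st}}},\delta_i+\ep_m)=0$, and $\mf b$ is the Borel of diagram~\eqref{ABC:diagram:D7}, whose $\ep\delta$-sequence ends with a $\delta$ and has $s(\mf b)=-1$, so \eqref{eq:aux001} does not directly yield $\la^\mf b$. Instead one reaches $\mf b$ from $\mf b^{\texttt{st}}$ by an explicit chain of reflections (odd reflections moving $\ep_1\cdots\ep_{m-1}$ and then $\ep_m$ to the left past $\delta_i,\dots,\delta_n$, followed by the reflection realizing the sign change on $\ep_m$) and tracks $\gamma+\rho$ by \eqref{eq:odd reflection rho}. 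Because the degree of atypicality is $1$, this vector is unchanged along the chain except at the single step whose reflecting root is orthogonal to it, where it shifts by that root; feeding in $(\la^\natural+\rho^{\mf b^{\texttt{st}}},\delta_i+\ep_m)=0$ one obtains $\la^\mf b+\rho^\mf b=\sum_{i=1}^n a_i\delta_i+\sum_{j=1}^m b_j\ep_j$ with the $a$'s and $b$'s strictly decreasing and positive and $a_i=b_m$, which is (1) in this case.

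Finally, (1) implies (2) and (3) as in Lemma~\ref{lem:htwt:bodd-1}. From the explicit form of $\la^\mf b$, its pairing with the coroot of each simple root of $\osp(2k+2e(\la)|2k)$ (respectively $\mf{sl}(1|1)$) occupying the right-most nodes of the diagram of $\mf b$ vanishes, so $L(\mf l,\la^\mf b)$ is one-dimensional and trivial on that subalgebra, which is (2); and the roots listed in $T_{\la^\mf b}$ are mutually orthogonal isotropic odd simple roots of $\mf b$ lying inside $\mf l$ with $(\la^\mf b+\rho^\mf b,\beta)=0$, and since $\rho^\G$ relative to $\mf b$ and $\rho^{\mf l}$ relative to $\underline{\mf b}$ differ by $\rho^{\mf u}$ with $(\rho^{\mf u},\beta)=(\rho^{\G,\mf b},\beta)-(\rho^{\mf l,\underline{\mf b}},\beta)=\hf(\beta,\beta)-\hf(\beta,\beta)=0$, this proves (3) both for $\G$ with $\mf b$ and for $\mf l$ with $\underline{\mf b}$. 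The main obstacle is the case $\la_{n+1}=m$: no closed formula for $\la^\mf b$ is available there, so the reflection computation must be carried out by hand, and one must verify that the Borel \eqref{ABC:diagram:D7} has been chosen so that the unique distinguished root $\delta_i+\ep_m$ becomes simple while $\la^\mf b+\rho^\mf b$ remains dominant of the stated form.
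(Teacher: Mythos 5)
Your proposal is correct and follows essentially the same route as the paper, which likewise computes $\rho^{\mf b}$ by iterating \eqref{eq:odd reflection rho} along odd reflections, obtains $\la^{\mf b}$ from the Frobenius-coordinate formula \eqref{eq:aux001}, invokes Theorem~\ref{thm:classification D} and the references to \cite{CW} and \cite{SV2}, and then reads (2) and (3) off the explicit form in (1). (One cosmetic point: in the case $\la_{n+1}=m$ the paper's chain \eqref{seq:odd:ref} deliberately omits the reflection at $\delta_i+\ep_m$, so the shifted weight is in fact unchanged at \emph{every} step rather than shifting at one; your conclusion is unaffected.)
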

\begin{proof}
Suppose that $\la_{n+1}<m$. {By the same arguments as in the proof of  Lemma \ref{lem:htwt:bodd-1}(1), we have
$$\la^\mf b+\rho^\mf b=\sum_{i=1}^{n-k}a_i\delta_i+\sum_{j=1}^{m-k}b_j\ep_j,$$
for some $a_1>\cdots>a_{n-k}> 0$ and $b_1>\ldots>b_{m-k}\geq 0$. Furthermore, since $(\rho^{\mf b},\ep_{m-k})=0$, we can check that  $b_{m-k}=0$ if and only if} $e(\la)=1$.
The proof of the other parts is a straightforward verification and we omit it.
\end{proof}

\begin{prop}\label{prop:Euler=KW}
Let  $\G=\osp(\ell|2n)$ and let $\mf p={\mf l}+{\mf b}$ be the parabolic subalgebra with Borel subalgebra ${\mf b}$ and Levi subalgebra ${\mf l}$ as in \eqref{eq:borel levi odd-1}, \eqref{eq:borel levi odd-2}, and \eqref{eq:borel levi odd-3}. Suppose that  Formula \eqref{formula:kw} holds for the (trivial) ${\mf l}$-module $L(\mf l,\la^{{\mf b}})$ with respect to $\underline{\mf b}$. Then we have
\begin{align*}
{\rm ch}\mc E({\rm Ind}_{\mf p}^{\G} L(\mf l,\la^{\mf b}))=\frac{1}{j_{\la^{{\mf b}}}}D_{\mf b}^{-1}\sum_{w\in W_\G}(-1)^{\ell(w)} w\left( \frac{e^{\la^\mf b+\rho^{\mf b}}}{\prod_{\beta\in T_{{\la}^\mf b}}(1+e^{-\beta})} \right).
\end{align*}
\end{prop}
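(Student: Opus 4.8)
The strategy is to combine the character formula for the Euler characteristic of a parabolic Verma module (Proposition~\ref{form:Euler}, or equivalently Remark~\ref{form:Euler:1}) with the hypothesis that the Kac-Wakimoto formula holds for the one-dimensional $\mf l$-module $L(\mf l,\la^{\mf b})$. The key point is that $L(\mf l,\la^{\mf b})$ is tame with respect to $\underline{\mf b}$ with distinguished set $T_{\la^{\mf b}}$ (this is part (3) of Lemma~\ref{lem:htwt:bodd-1} or Lemma~\ref{lem:htwt:bodd-2}), so by assumption its character is
\begin{align*}
{\rm ch}\, L(\mf l,\la^{\mf b})=\frac{1}{j_{\la^{\mf b}}} D_{\underline{\mf b}}^{-1} \sum_{w\in W_{\mf l}}(-1)^{\ell(w)} w\!\left( \frac{e^{\la^{\mf b}+\rho^{\underline{\mf b}}}}{\prod_{\beta\in T_{\la^{\mf b}}}(1+e^{-\beta})} \right).
\end{align*}
Note that $W_{\mf l}=W_{\G}$ because $\mf l$ has the same even part structure on the orthosymplectic factor as a product of $\mf{sl}_2$'s or $\mf{so}$-type pieces whose Weyl group is already accounted for; more precisely, in each of \eqref{eq:borel levi odd-1}, \eqref{eq:borel levi odd-2}, \eqref{eq:borel levi odd-3} the even part of the orthosymplectic summand of $\mf l$ has the full Weyl group $W_\G$ acting, while the $\gl$-part of $\mf l$ is trivial on the nose. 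I would first record this identification of Weyl groups carefully, since it is what allows the sum over $W_{\mf l}$ to become a sum over $W_\G$.

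\textbf{Main computation.} Starting from Remark~\ref{form:Euler:1} applied to $M=L(\mf l,\la^{\mf b})$, we have
\begin{align*}
{\rm ch}\,\mc E({\rm Ind}_{\mf p}^{\G}L(\mf l,\la^{\mf b}))=\frac{1}{D_{\mf b,\even}}\sum_{w\in W_\G}(-1)^{\ell(w)} w\!\left( e^{\rho_\even^{\mf b}}\,{\rm ch}\,L(\mf l,\la^{\mf b})\prod_{\alpha\in\Phi^+_{\mf b}(\mf u_{\bar 1})}(1+e^{-\alpha}) \right).
\end{align*}
Now I would substitute the displayed expression for ${\rm ch}\,L(\mf l,\la^{\mf b})$, using that $\rho^{\mf b}_\even-\rho^{\underline{\mf b}}_\even$ together with $\rho^{\underline{\mf b}}_\odd$ reorganize so that $e^{\rho^{\mf b}_\even}\cdot e^{\la^{\mf b}+\rho^{\underline{\mf b}}}$ combines with the odd denominators to produce $e^{\la^{\mf b}+\rho^{\mf b}}$ times the appropriate ratio of $D$-factors. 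The bookkeeping is: $D_{\mf b}^{-1}=D_{\mf b,\odd}/D_{\mf b,\even}$, and $D_{\mf b,\odd}=D_{\underline{\mf b},\odd}\cdot\prod_{\alpha\in\Phi^+_{\mf b}(\mf u_{\bar 1})}(e^{\alpha/2}+e^{-\alpha/2})$; rewriting $\prod(1+e^{-\alpha})=e^{-\rho(\mf u_{\bar 1})}\prod(e^{\alpha/2}+e^{-\alpha/2})$ with $\rho(\mf u_{\bar 1})=\tfrac12\sum_{\alpha\in\Phi^+_{\mf b}(\mf u_{\bar 1})}\alpha$, and similarly $\prod_{\beta\in T_{\la^{\mf b}}}(1+e^{-\beta})^{-1}$, one checks the exponents of $e$ line up to give $\la^{\mf b}+\rho^{\mf b}$ inside the $w$-sum and the stray factors assemble into $D_{\mf b}^{-1}$ outside (after using $W$-invariance of $D_{\mf b,\odd}$, as in Remark~\ref{form:Euler:1}, and of the various $\rho$-shifts where needed). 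This yields exactly the right-hand side of the claimed identity.

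\textbf{Expected obstacle.} The only delicate point is the $\rho$-bookkeeping: one must verify that $\rho^{\mf b}=\rho^{\underline{\mf b}}+\rho_\even(\mf u)-\rho_\odd(\mf u)$ in a form compatible with how the odd roots of $\mf u$ and the distinguished roots $T_{\la^{\mf b}}$ are distributed between $\mf l$ and $\mf u$, and that $T_{\la^{\mf b}}\subseteq\Pi_{\underline{\mf b}}\subseteq\Pi_{\mf b}$ so that these roots really are roots of $\mf l$ and do not contribute to $\Phi^+_{\mf b}(\mf u_{\bar 1})$. Since by construction (Lemmas~\ref{lem:htwt:bodd-1} and \ref{lem:htwt:bodd-2}) the set $T_{\la^{\mf b}}$ consists entirely of simple roots of the orthosymplectic Levi factor, this disjointness holds, and the computation is then a purely formal manipulation of the Weyl denominator identities. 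I would present it as: (i) identify $W_{\mf l}=W_\G$; (ii) substitute the KW-formula for ${\rm ch}\,L(\mf l,\la^{\mf b})$ into Remark~\ref{form:Euler:1}; (iii) collect exponentials and use $D_{\mf b}=D_{\mf b,\even}/D_{\mf b,\odd}$ together with $W$-invariance of $D_{\mf b,\odd}$ to extract the prefactor, obtaining the stated formula.
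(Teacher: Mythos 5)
Your step (i) is where the argument breaks down: the identification $W_{\mf l}=W_\G$ is false. The Levi subalgebras in \eqref{eq:borel levi odd-1}--\eqref{eq:borel levi odd-3} are proper: in \eqref{eq:borel levi odd-1}, $\mf l=\h+\osp(2k+1|2k)$ has $W_{\mf l}$ equal to the Weyl group of $\mf{so}(2k+1)\oplus\mf{sp}(2k)$, a proper subgroup of $W_\G=(S_n\ltimes\Z_2^n)\times(S_m\ltimes\Z_2^m)$ whenever $k<\min(m,n)$; and in \eqref{eq:borel levi odd-3}, $\mf l=\h+\mf{sl}(1|1)$ has trivial Weyl group. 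Because of this, after you substitute the Kac--Wakimoto expression for ${\rm ch}\,L(\mf l,\la^{\mf b})$ into Remark~\ref{form:Euler:1} you face a genuine double sum: an outer alternating sum over $W_\G$ applied to an inner alternating sum over $W_{\mf l}$ divided by $D_{\underline{\mf b}}$. A naive collapse of the two sums produces a spurious factor of $|W_{\mf l}|$, while leaving the inner sum intact leaves the non-$W_\G$-equivariant factor $D_{\underline{\mf b},\even}^{-1}$ trapped inside the outer symmetrization. Your "purely formal manipulation of Weyl denominator identities" in step (iii) glosses over exactly this obstruction.

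The missing idea, which is the heart of the paper's proof, is to first decompose the outer sum along cosets, $\sum_{w\in W_\G}=\sum_{\sigma\in W_\G/W_{\mf l}}\sum_{\tau\in W_{\mf l}}$, pull the $W_{\mf l}$-invariant quantity ${\rm ch}\,L(\mf l,\la^{\mf b})$ and the $W_{\mf l}$-invariant odd factors out of the inner sum, and recognize the remaining inner sum $\sum_{\tau\in W_{\mf l}}(-1)^{\ell(\tau)}\tau(e^{\rho^{\underline{\mf b}}_\even})$ as the Weyl denominator $D_{\underline{\mf b},\even}$ of $\mf l_\even$. Only at that point does one substitute the hypothesis \eqref{formula:kw} for ${\rm ch}\,L(\mf l,\la^{\mf b})$: the factor $D_{\underline{\mf b},\even}^{-1}$ it carries cancels against the denominator identity just produced, and the re-introduced inner $W_{\mf l}$-sum recombines with the outer coset sum into a single alternating sum over $W_\G$. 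Your $\rho$- and $D_{\odd}$-bookkeeping in step (iii), and the observation that $T_{\la^{\mf b}}\subseteq\Pi_{\underline{\mf b}}$, are correct and do appear in the paper's computation, but without the coset decomposition and the denominator identity for $\mf l_\even$ the derivation does not close.
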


\begin{proof}
To simplify notations, we put $\la=\la^{\mf b}$, $\rho=\rho^{\mf b}$, $\underline{\rho}=\rho^{\underline{\mf b}}$, $D=D_{\mf b}$, $\underline{D}=D_{\underline{\mf b}}$, $W=W_\G$, and $T=T_{\la^{{\mf b}}}$. Also, we let $\underline{D}_\kappa=D_{\underline{\mf b},\kappa}$, $\rho_{\kappa}=\underline{\rho}_{\kappa}+\rho'_{\kappa}$, for $\kappa=\even,\odd$, and $j=j_{\la^{\mf b}}$.

{\allowdisplaybreaks
We compute by Proposition \ref{form:Euler}
\begin{align*}
&D\,{\rm ch}\mc E({\rm Ind}_{\mf p}^{\G} L(\mf l,\la))\\ &=  \sum_{w\in W}(-1)^{\ell(w)}w\!\left( \frac{e^\rho{\rm ch}L(\mf l,\la)}{\prod_{\gamma\in \Phi^+(\mf l_\odd)}(1+e^{-\gamma})} \right) \\
&=  \sum_{\sigma\in W/W_{\mf l}}(-1)^{\ell(\sigma)}\sigma\!\left(
\sum_{\tau\in W_{\mf l}}(-1)^{\ell(\tau)}\tau\!\left(
\frac{e^\rho{\rm ch}L(\mf l,\la)}{\prod_{\gamma\in \Phi^+(\mf l_\odd)}(1+e^{-\gamma})} \right)\right)\allowdisplaybreaks\\
&=  \sum_{\sigma\in W/W_{\mf l}}(-1)^{\ell(\sigma)}\sigma\!\left( {\rm ch}L(\mf l,\la)
\sum_{\tau\in W_{\mf l}}(-1)^{\ell(\tau)}\tau\!\left(
\frac{e^{\rho_\even-\underline{\rho}_\odd-\rho'_\odd}}{\prod_{\gamma\in \Phi^+(\mf l_\odd)}(1+e^{-\gamma})} \right)\right) \\
&=  \sum_{\sigma\in W/W_{\mf l}}(-1)^{\ell(\sigma)}\sigma\!\left( \frac{{\rm ch}L(\mf l,\la)}{\underline{D}_\odd}
\sum_{\tau\in W_{\mf l}}(-1)^{\ell(\tau)}\tau\!\left(
{e^{\rho_\even-\rho'_\odd}} \right)\right) \\
&=  \sum_{\sigma\in W/W_{\mf l}}(-1)^{\ell(\sigma)}\sigma\!\left( \frac{{\rm ch}L(\mf l,\la)}{\underline{D}_\odd} e^{\rho'_\even-\rho'_\odd}
\sum_{\tau\in W_{\mf l}}(-1)^{\ell(\tau)}\tau\left(
{e^{\underline{\rho}_\even}} \right)\right) \\
&= \frac{1}{j}  \sum_{\sigma\in W/W_{\mf l}}(-1)^{\ell(\sigma)}\sigma\!\left( \frac{1}{\underline{D}_\even} \sum_{\tau\in W_{\mf l}}(-1)^{\ell(\tau)}\tau\!\left(
\frac{e^{\la+\underline{\rho}}}{\prod_{\beta\in T}(1+e^{-\beta})} \right)
e^{\rho'_\even-\rho'_\odd}
\sum_{\tau\in W_{\mf l}}(-1)^{\ell(\tau)}\tau(e^{\underline{\rho}_\even})\right) \\
&= \frac{1}{j} \sum_{\sigma\in W/W_{\mf l}}(-1)^{\ell(\sigma)}\sigma\!\left( \sum_{\tau\in W_{\mf l}} (-1)^{\ell(\tau)} \tau\left( \frac{e^{\la+\rho}}{\prod_{\beta\in T}(1+e^{-\beta})} \right)\right),
\end{align*}}
\noindent \hskip -1.5 mm where in the last identity we have used the Weyl denominator identity for $\mf l_\even$.  Now the last expression proves our claim.
\end{proof}

\section{Properties of highest weights of tame modules}\label{sec:hw:tame}
In this section, we suppose that $\G=\osp(\ell|2n)$.
Let us denote by $\preccurlyeq$ the partial ordering on the set of $\mf b^{\texttt{st}}$-highest weights of finite-dimensional irreducible $\G$-modules induced by $\Phi^+_{\mf b^{\texttt{st}}}$. That is, for two such weights $\gamma$ and $\nu$ one has $\gamma\preccurlyeq\nu$ if and only if $\nu-\gamma\in\Z_+\Phi^+_{\mf b^{\texttt{st}}}$.

\begin{prop}\label{prop:central ch of tame}
Let $\la,\mu\in \mc{H}(n|m)$ such that  $L(\la^\natural)$ and $L(\mu^\natural)$ are tame. Let $k$ be the degree of atypicality of $L(\la^\natural)$.
\begin{itemize}
\item[(1)] Let $\G=\osp(2m+1|2n)$. Then $\chi_{\la^\natural}=\chi_{\mu^\natural}$ implies $\la^\natural=\mu^\natural$.
\item[(2)] Let $\G=\osp(2m|2n)$ with $m\ge 2$ and suppose that $k\not=1$.
Then
 $\chi_{\la^\natural}=\chi_{\mu^\natural}$, equivalently $\chi_{\la^\natural}=\chi_{\mu_-^\natural}$ in the case $k\not=0$, implies $\la^\natural=\mu^\natural$.

\item[(3)] Let $\G=\osp(2m|2n)$ with $m\ge 2$ and suppose that $k=1$. We have the following:
\begin{itemize}
\item[(i)] If $\la_n<m-1$, then
 $\chi_{\la^\natural}=\chi_{\mu^\natural}$, equivalently  $\chi_{\la^\natural}=\chi_{\mu_-^\natural}$,
implies $\la^\natural=\mu^\natural$.
\item[(ii)] If $\la_n\ge m-1$, then there are finitely many $\mu\in\mc{H}(n|m)$ satisfying $\chi_{\la^\natural}=\chi_{\mu^\natural}$, equivalently $\chi_{\la^\natural}=\chi_{\mu_-^\natural}$.
\end{itemize}
\end{itemize}
In addition, $(2)$ and $(3)$  hold if we replace $\la^\natural$, $\mu^\natural$, and $\mu_-^\natural$ therein by $\la_-^\natural$, $\mu_-^\natural$, and $\mu^\natural$, respectively.
\end{prop}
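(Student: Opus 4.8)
The plan is to compute both central characters with respect to the Borel subalgebra $\mf b$ of \eqref{eq:borel levi odd-1}, \eqref{eq:borel levi odd-2}, \eqref{eq:borel levi odd-3} (i.e.\ $\mf b^{\texttt{odd}}$, or the Borel of \eqref{ABC:diagram:D7} when $\la_{n+1}=m$), together with the accompanying Levi $\mf l$, where the highest weights acquire the transparent shape of Lemmas \ref{lem:htwt:bodd-1} and \ref{lem:htwt:bodd-2}. Since the central character is intrinsic to the module while, by Proposition \ref{prop:reduction}, tameness is preserved under $W$-conjugation of the Borel, we may freely pass to these data. First I would invoke the linkage principle for $\G=\osp(\ell|2n)$ (\cite{GS}; see also \cite[Ch.~2]{CW}): $\chi_{\la^\natural}=\chi_{\mu^\natural}$ holds if and only if $\mu^{\mf b}+\rho^{\mf b}=w\bigl(\la^{\mf b}+\rho^{\mf b}+\textstyle\sum_{r}c_r\beta_r\bigr)$ for some $w\in W$, scalars $c_r$, and mutually orthogonal isotropic roots $\beta_r$ with $(\la^{\mf b}+\rho^{\mf b},\beta_r)=0$; moreover the number of $\beta_r$ that are genuinely needed equals the degree of atypicality of $L(\mu^\natural)$. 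Thus $\chi_{\la^\natural}=\chi_{\mu^\natural}$ already forces $L(\mu^\natural)$ to be tame of the same atypicality $k$ as $L(\la^\natural)$, and hence to have the core-plus-tail form of the cited lemmas.

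The conceptual point I would use next is that the Levi $\mf l$ is chosen precisely so that $L(\mf l,\la^{\mf b})$ is one-dimensional and the distinguished set $T_{\la^{\mf b}}$ of Lemma \ref{lem:htwt:bodd-1}(3), \ref{lem:htwt:bodd-2}(3) is a maximal orthogonal isotropic set orthogonal to $\la^{\mf b}+\rho^{\mf b}$; one may therefore take $\{\beta_r\}=T_{\la^{\mf b}}$, so that the shift $\sum_r c_r\beta_r$ does not touch the \emph{core} $\sum_i a_i\delta_i+\sum_j b_j\ep_j$ of $\la^{\mf b}+\rho^{\mf b}$ (the complementary part, equal to a ``tail'' of $\pm\tfrac12$'s or zeros depending only on $k,m,n$). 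Consequently $\chi_{\la^\natural}=\chi_{\mu^\natural}$ forces the core of $\mu$ to lie in the $W$-orbit of the core of $\la$. In cases (1), (2) and (3)(i) the core of $\la$ is \emph{strictly dominant}: its $\delta$-coordinates and its $\ep$-coordinates are each strictly decreasing and positive, and --- crucially, because $L(\la^\natural)$ has atypicality exactly $k$ --- no core coordinate coincides with a tail value or with a core coordinate of the other type. Hence the core is rigid: it cannot be altered by any admissible isotropic shift and is the unique point of its $W$-orbit. Tracking this through (a routine check absorbing ``shifts by $1$'' into $W$), the core of $\mu$ must equal that of $\la$; the tails agree because $k,m,n$ do, whence $\mu^{\mf b}+\rho^{\mf b}=\la^{\mf b}+\rho^{\mf b}$, and finally $\mu^\natural=\la^\natural$ since $\la\mapsto\la^\natural\mapsto\la^{\mf b}$ is injective, cf.\ \eqref{eq:aux004}, \eqref{eq:aux001}. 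For (3)(i) it is exactly the hypothesis $\la_n<m-1$ that rules out the degeneracies permitted by Lemma \ref{lem:htwt:bodd-2}(1) and secures strict dominance.

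For part (3)(ii), i.e.\ $k=1$ and $\la_n\ge m-1$, the core fails to be rigid: in the subcase $\la_{n+1}=m$ the single isotropic root in $T_{\la^{\mf b}}$ is $\delta_i+\ep_m$, a ``core-to-core'' root along which one $\delta$-coordinate and $b_m$ move in tandem, and more generally, when $\la_n\ge m-1$ there is ``room'' for a nonzero shift that lands on another admissible highest weight --- in particular a shift may convert a $\mu^\natural$ with $\mu_{n+1}<m$ into one with $\mu_{n+1}=m$, switching between cases (i) and (ii) of Theorem \ref{thm:classification D}. Here I would argue finiteness directly: the shift parameter must keep the two moving coordinates inside the bounded gaps left by the fixed core coordinates while the resulting weight retains the form of Lemma \ref{lem:htwt:bodd-2}(1), so only finitely many shifts are admissible; equivalently, the linkage condition bounds every part of $\mu$ in terms of $\la$, and $\mc{H}(n|m)$ contains only finitely many such partitions. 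Pinning down exactly when such a nonzero shift exists --- thereby confirming that $\la_n\ge m-1$ is precisely the threshold between uniqueness and mere finiteness --- is the step I expect to be the main obstacle, as it amounts to a careful bookkeeping of admissibility for the two moved coordinates.

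Finally, the assertions involving $\mu_-^\natural$ and the closing addendum follow by transporting the whole argument through the outer automorphism $\sigma$. By \eqref{eq:sigma twist} and Lemma \ref{lem:sigma tame}, $L(\mu_-^\natural)\cong L(\mu^\natural)^\sigma$ is tame of atypicality $k$, with highest-weight-plus-$\rho$ obtained by applying $\sigma$ (which merely negates $\ep_m$); when $k\ge 1$ the core involves no $\ep_m$, so $\sigma$ fixes it and $\mu^\natural$, $\mu_-^\natural$ share the same core. Combined with the linkage description --- and with the observation that, since the symplectic factor of $W$ admits single sign changes on the $\delta$'s, a suitable isotropic shift followed by such a reflection realizes the negation of $\ep_m$ in the atypical case --- this yields $\chi_{\mu^\natural}=\chi_{\mu_-^\natural}$ whenever $k\ne 0$, hence the stated equivalences; and running the argument above with $\sigma$ applied throughout, using that $\sigma$ is an involution, gives the last sentence that (2) and (3) persist after interchanging $\la^\natural,\mu^\natural,\mu_-^\natural$ with $\la_-^\natural,\mu_-^\natural,\mu^\natural$.
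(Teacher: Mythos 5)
Your overall strategy is the same as the paper's: strip away the atypical entries of the shifted highest weight, observe that what remains is a strictly positive, strictly ordered ``core'' on which the hyperoctahedral sign changes cannot act, and conclude that the core (hence the weight) is determined by the central character. You run this in $\mf b^{\texttt{odd}}$-coordinates using Lemmas \ref{lem:htwt:bodd-1} and \ref{lem:htwt:bodd-2}, whereas the paper works in $\mf b^{\texttt{st}}$-coordinates, removes one maximal atypical set, invokes \cite[Theorem 2.30]{CW} to compare the residual sequences under $(S_{n-k}\ltimes\Z_2^{n-k})\times(S_{m-k}\ltimes\Z_2^{m-k})$, and finishes with \cite[Lemma 3.4]{CK}; your replacement of that last citation by direct reconstruction of the full shifted weight (core plus a tail determined by $k,m,n$) is legitimate. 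However, two points need repair. First, your claim that $\chi_{\la^\natural}=\chi_{\mu^\natural}$ ``already forces $L(\mu^\natural)$ to be tame'' is false: equal central characters force equal degree of atypicality, but not tameness --- indeed Theorem \ref{thm:KW:bottom} exhibits non-tame weights in every atypical block. This does not sink the argument only because tameness of $L(\mu^\natural)$ is a hypothesis of the proposition, but the deduction as stated is wrong.

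Second, and more seriously, the rigidity step is not justified as written. The linkage condition quantifies existentially over \emph{all} sets of mutually orthogonal isotropic roots orthogonal to $\la^{\mf b}+\rho^{\mf b}$ and over all shift scalars; you cannot simply declare ``one may take $\{\beta_r\}=T_{\la^{\mf b}}$.'' In general there are orthogonal isotropic roots besides those in $T_{\la^{\mf b}}$ that touch the core: e.g.\ in type $B$ with $k\ge 1$ one can have $a_{n-k}=\tfrac12$, so that $\delta_{n-k}-\ep_{m-k+t}$ is also orthogonal to $\la^{\mf b}+\rho^{\mf b}$ and a maximal orthogonal set may move the core entry $a_{n-k}$; similarly in type $D$ with $e(\la)=1$ the entry $b_{m-k}=0$ participates in extra orthogonal roots. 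To close the gap you must either argue that every admissible shift followed by $w\in W$ which lands on a weight of the tame core-plus-tail form (dominant and regular for $\G_\even$) returns the same core --- which is essentially the bookkeeping the paper outsources to \cite[Theorem 2.30]{CW} together with the positivity of the residual entries --- or quote that theorem directly as the paper does. Your treatment of (3)(ii) and of the $\sigma$-twisted statements is in line with the paper (the shift by $-2b_m$ along $\delta_i+\ep_m$ followed by a sign change on $\delta_i$ correctly realizes $\sigma$ inside the linkage class), but the finiteness argument should be pinned down as in Remark \ref{rem:part32}: the residual determines all but one pair of coordinates, and the remaining pair is bounded by $b_{m-1}$.
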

\begin{proof}
(1)
Suppose that $\G=\osp(2m+1|2n)$.  Let $\rho=\rho^{{\mf b}^{\texttt{st}}}$.
 Since $\chi_{\la^\natural}=\chi_{\mu^\natural}$, they have the same degree of atypicality $k$.  We consider $\la^\natural+\rho=\sum_{i=1}^n a_i\delta_i+\sum_{j=1}^m b_j\ep_j$ and $\mu^\natural+\rho=\sum_{i=1}^n c_i\delta_i+\sum_{j=1}^m d_j\ep_j$, and identify them with the following sequences (of half-integers)
\begin{equation}\label{aux:eqn1}
\begin{split}
\la^\natural+\rho&=(\,a_1,\ldots,a_n\,|\,b_1,\ldots,b_m\,),\\
\mu^\natural+\rho&=(\,c_1,\ldots,c_n\,|\,d_1,\ldots,d_m\,),
\end{split}
\end{equation}
respectively. Note that $W=(S_{n}\ltimes\Z_2^{n})\times (S_{m}\ltimes\Z_2^{m})$.

By Theorem \ref{thm:classification B}, we can find a set of isotropic odd roots $\{\,\delta_{i_r}-\epsilon_{j_r}\,|\,r=1,\ldots,k\,\}$ mutually orthogonal and also orthogonal to $\la^\natural+\rho$ for some $1\leq i_1,\ldots, i_k\leq n$ and $1\leq j_1,\ldots, j_k\leq m$. Removing all the entries $a_{i_r}$ and $b_{j_r}$ for $i=1,\ldots,k$, from $(\,a_1,\ldots,a_n\,|\,b_1,\ldots,b_m\,)$, we are left with an expression which we shall denote by $[\la^\natural+\rho]$. We do the same now for the sequence $\mu^\natural+\rho=(\,c_1,\ldots,c_n\,|\,d_1,\ldots,d_m\,)$. After removing all such entries in each of the two expressions in \eqref{aux:eqn1}, we are left with respective sequences
\begin{equation}\label{aux:eqn2}
\begin{split}
[\la^\natural+\rho]&:=[\,\tilde{a}_1,\ldots,\tilde{a}_{n-k}\,|\,\tilde{b}_1,\ldots,\tilde{b}_{m-k}\,],\\
[\mu^\natural+\rho]&:=[\,\tilde{c}_1,\ldots,\tilde{c}_{n-k}\,|\,\tilde{d}_1,\ldots,\tilde{d}_{m-k}\,].
\end{split}
\end{equation}
By \cite[Theorem 2.30]{CW} (see also \cite{K2, Sv2}), these two sequences coincide up to the action of $(S_{n-k}\ltimes\Z_2^{n-k})\times (S_{m-k}\ltimes\Z_2^{m-k})< W$  since $\chi_{\la^\natural}=\chi_{\mu^\natural}$. Now we note that $\tilde{b}_j$ and $\tilde{d}_j$ are positive since all $b_j$ and $d_j$ are positive, and $\tilde{a}_i$ and $\tilde{c}_i$ are also positive since $\la$ and $\mu$ are $(n|m)$-hook partitions (cf.~\eqref{eq:rhost}).
Hence the two subsequences $[\la^\natural+\rho]$ and $[\mu^\natural+\rho]$ coincide up to the action of  $S_{n-k}\times S_{m-k} < W$. This implies that $\la^\natural$ and $\mu^\natural$ have the same central character with respect to the Levi subalgebra $\gl(n|m)$ of $\G$ in \eqref{ABC:diagram:A1}. Now, we can apply \cite[Lemma 3.4]{CK} (see also \cite[Proposition 3.24]{CW}) to conclude that $\la^\natural=\mu^\natural$ or $\la=\mu$.\vskip 2mm

(2) Suppose that $\G=\osp(2m|2n)$.  Let $\rho=\rho^{{\mf b}^{\texttt{st}}}$. Here we have $W=(S_{n}\ltimes\Z_2^{n})\times (S_{m}\ltimes \overline{\Z_2^{m}})$, where $\overline{\Z_2^{m}}$ is the subset of $\Z_2^{m}$ with even number of $1$'s. We can assume that $k\ge 1$, since in the typical case it is well-known that the $L(\la^\natural)$ (respectively, $L(\la_-^\natural)$) is the only irreducible finite-dimensional module with the central character $\chi_{\la^\natural}$ (respectively, $\chi_{\la_-^\natural}$).

Suppose that $k>1$ and $\chi_{\la^\natural}=\chi_{\mu^\natural}$. By Theorem \ref{thm:classification D}(i), we have $(\la^\natural,\epsilon_m)=0$ and hence $\la^\natural=\la_-^\natural$. Now we can apply the same argument as in the case when $\G=\osp(2m+1|2n)$ to conclude $\la^\natural=\mu^\natural$.  \vskip 2mm

(3) Suppose that $\G=\osp(2m|2n)$ and $k=1$. Let $\rho=\rho^{{\mf b}^{\texttt{st}}}$.
For (i), we note that $\la_n<m-1$ implies that $(\la^\natural,\epsilon_m)=0$. Thus, we have $a_n<0$ and $b_m=0$ in \eqref{aux:eqn1}, and hence $\la^\natural + \rho=(\cdots -a|\cdots a\cdots 0)$ for some $a>0$. Suppose that $\chi_{\la^\natural}=\chi_{\mu^\natural}$.  Then there are two possibilities for $\mu^\natural+\rho$: (A) $\mu^\natural+\rho=(\cdots -c|\cdots c\cdots 0)$ with $(\mu^\natural+\rho,\delta_n-\epsilon_j)=0$, where $1\le j<m$ and $c>0$, or else (B) $\mu^\natural+\rho=(\cdots b\cdots|\cdots b)$ with $(\mu^\natural+\rho,\delta_i+\epsilon_m)=0$, where $1\le i\le n$ and $b\ge 0$. Here we have $b=0$ if and only if $i=n$. But (B) is not possible, since $b_m=\tilde{b}_{m-1}=0$ in \eqref{aux:eqn2}. Thus, (A) holds and we can apply the same argument as in  (2) to conclude that $\la^\natural=\mu^\natural$.

Finally suppose that $\la_n\ge m-1$. Thus, we have $a_n\ge 0$. Suppose that $\chi_{\la^\natural}=\chi_{\mu^\natural}$. By Theorem \ref{thm:classification D}(ii) and similar arguments as in (i), we know that the isotropic root $\beta$ orthogonal to $\la^\natural + \rho$ and the isotropic root $\gamma$ orthogonal to $\mu^\natural + \rho$ are necessarily of the form $\delta_i+\epsilon_m$ and $\delta_j+\ep_m$, respectively. Removing the non-negative entries $a_i,b_m$ and $c_j, d_m$ in \eqref{aux:eqn1} depending on the roots $\beta$ and $\gamma$, we are left with expressions $[\la^\natural+\rho]$ and $[\mu^\natural+\rho]$. As in (1), $[\la^\natural+\rho]$ and $[\mu^\natural+\rho]$ have only positive entries and hence coincide up to the action of $S_{n-1}\times S_{m-1} < W$. Hence there are only finitely many choices of $\mu^\natural$ or $\mu$. This settles (ii).
\end{proof}

\begin{rem}\label{rem:part32}
Let $\G=\osp(2m|2n)$ with $m\ge 2$ and $\la\in\mc{H}(n|m)$ with $\la_n\ge m-1$. Suppose that $L(\la^\natural)$ is tame so that the degree of atypicality is $1$. We can describe the finite set of highest weights in Theorem \ref{prop:central ch of tame}(3)(ii) explicitly. For this purpose, we let $\la^\natural+\rho=\sum_{i=1}^na_i\delta_i+\sum_{j=1}^mb_j\epsilon_j$ and suppose that $a_i=b_m$. Let $X=\{\,0,1,\ldots,b_{m-1}-1\,\}\setminus\{a_1,\ldots,\widehat{a_i},\ldots,a_n\}$, where \ $\widehat{}$ \, stands for omission, as usual. Now for each $x\in X$ we define $\la(x)\in\mc{H}(n|m)$ to be the hook partition that is uniquely determined by the property that $\la(x)^\natural+\rho=\sum_{i=1}^nc_i\delta_i+\sum_{j=1}^md_j\epsilon_j$, satisfying
\begin{align*}
&\{c_1,\ldots,c_n\}=\{a_1,\ldots,\widehat{a_i},\ldots,a_n\}\cup\{x\},\\
&\{d_1,\ldots,d_m\}=\{b_1,\ldots,b_{m-1},x\}.
\end{align*}
Then the set $\{\,\la(x)^\natural,\la(x)_-^\natural\,|\,x\in X\,\}$ is precisely the set of highest weights of tame $\G$-modules with central character $\chi_\la^{\natural}$. We note that $0\in X$ and $\la(0)^\natural$ is the minimum in the set $\{\,\la(x)^\natural,\la(x)_-^\natural\,|\,x\in X\,\}$ with respect to $\preccurlyeq$.
\end{rem}

\begin{example}
Consider $\mf{osp}(6|4)$.  Let $\la$ be the $(2|3)$-hook partition $(3,3,3,2,2,2,1)$ so that we have
\begin{align*}
\la^\natural+\rho=2\delta_1+\delta_2+7\epsilon_1+5\epsilon_2+\epsilon_3=(2,1|7,5,1).
\end{align*}
The set $X=\{0,1,2,3,4\}\setminus\{2\}=\{0,1,3,4\}$.  Thus, we have the following $4$ possibilities for $\la(x)^\natural+\rho$:
\begin{align*}
&\la(0)^\natural+\rho=(2,0|7,5,0),\text{ with }\la(0)=(3,2,2,2,2,2,1),\\
&\la(1)^\natural+\rho=(2,1|7,5,1),\text{ with }\la(1)=(3,3,3,2,2,2,1),\\
&\la(3)^\natural+\rho=(3,2|7,5,3),\text{ with }\la(3)=(4,4,3,3,3,2,1),\\
&\la(4)^\natural+\rho=(4,2|7,5,4),\text{ with }\la(4)=(5,4,3,3,3,3,1),
\end{align*}
with $\la^\natural=\la^\natural(1)$.
Hence, it follows that the weights in the set $\{\,\la(x)^\natural,\la(x)_-^\natural\,|\,x=0,1,3,4\,\}$ are precisely the highest weights of tame modules that have the same central character as $\la^\natural$. \end{example}

The following corollary is immediate from Proposition \ref{prop:central ch of tame}.

\begin{cor}
\begin{itemize}
\item[(1)]
Let $\G=\osp(2m+1|2n)$. Let $\mc{KW}$ be the Serre subcategory of the category of finite-dimensional $\G$-modules generated by the tame $\G$-modules. Then ${\mc{KW}}$ is semisimple.
\item[(2)] Let $\G=\osp(2m|2n)$. Let $\mc{KW}$ be the Serre subcategory of the category of finite-dimensional $\G$-modules generated by the tame $\G$-modules of $\mf b^{\texttt{st}}$-highest weights $\mu^\natural$ satisfying $\mu^\natural=\mu^\natural_-$. Then ${\mc{KW}}$ is semisimple.
\end{itemize}
\end{cor}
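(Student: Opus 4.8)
The plan is to reduce the corollary to the vanishing of $\mathrm{Ext}^1$ among the generating simple modules, and to extract that vanishing from \propref{prop:central ch of tame}. Recall that the Serre subcategory of the category of finite-dimensional $\G$-modules generated by a family $\mc S$ of pairwise non-isomorphic finite-dimensional simple modules has $\mc S$ as its set of simple objects and consists of exactly those finite-dimensional modules all of whose composition factors lie in $\mc S$; such a subcategory is semisimple if and only if $\mathrm{Ext}^1_\G(L,L')=0$ for all $L,L'\in\mc S$ (given the vanishing, split off the socle and induct on the length). Thus it suffices to check this vanishing with $\mc S$ the family of tame $\G$-modules in part (1), respectively the family of tame $\G$-modules with $\mf b^{\texttt{st}}$-highest weight $\mu^\natural$ satisfying $\mu^\natural=\mu^\natural_-$ in part (2).

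I would then invoke two standard facts. First, if $\mathrm{Ext}^1_\G(L,L')\neq 0$ for finite-dimensional simple $\G$-modules $L,L'$, then $L$ and $L'$ have the same central character, since the quadratic Casimir acts by a single scalar on any indecomposable finite-dimensional module. Second, a finite-dimensional simple $\G$-module has no self-extension within the category of finite-dimensional modules; this is well known and, for the ortho-symplectic case, follows from the description of blocks of finite-dimensional modules in \cite{GS} (see also \cite{Ger}). Granting these, it remains only to show that two distinct members of $\mc S$ always have distinct central characters: then $\mathrm{Ext}^1_\G$ vanishes on every pair from $\mc S$, and $\mc{KW}$ is semisimple.

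For part (1), $\G=\osp(2m+1|2n)$: an integer weight tame module is $L(\la^\natural)$ for some $\la\in\mc H(n|m)$, and \propref{prop:central ch of tame}(1) says exactly that $\chi_{\la^\natural}=\chi_{\mu^\natural}$ forces $\la^\natural=\mu^\natural$ whenever both $L(\la^\natural)$ and $L(\mu^\natural)$ are tame (this already covers the typical sub-case $k=0$); the only other tame modules are the half-integer weight ones, which are typical and hence, by Kac's classical result, each alone in its central-character class. For part (2), $\G=\osp(2m|2n)$: the members of $\mc S$ are the tame $L(\mu^\natural)$ with $\mu^\natural=\mu_-^\natural$, equivalently $\mu_{n+1}<m$. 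If two of them share a central character of atypicality degree $k$, then the typical case $k=0$ is Kac's result, the case $k\ge 2$ is \propref{prop:central ch of tame}(2), and the case $k=1$ with $\mu_n<m-1$ is \propref{prop:central ch of tame}(3)(i); in each of these we obtain equality of highest weights. The remaining case is $k=1$ with $\mu_n\ge m-1$, where \propref{prop:central ch of tame}(3)(ii) only provides finiteness. Here I would appeal to \remref{rem:part32}, which lists the highest weights of all tame $\G$-modules with a given central character $\chi$ as $\{\,\la(x)^\natural,\la(x)_-^\natural\mid x\in X\,\}$, and observe that among these exactly one weight, namely $\la(0)^\natural$ (the unique one whose $\ep_m$-coordinate after the $\rho^{\mf b^{\texttt{st}}}$-shift vanishes, equivalently the unique one with $\nu^\natural=\nu^\natural_-$), belongs to $\mc S$. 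Hence distinct members of $\mc S$ are separated by their central characters in this case too.

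The step to get right is this last one: \propref{prop:central ch of tame} alone does \emph{not} separate the tame $\osp(2m|2n)$-modules of atypicality one with $\mu_n\ge m-1$ — those central-character classes genuinely contain several non-isomorphic tame modules — so one must use the explicit list of \remref{rem:part32} to see that imposing $\nu^\natural=\nu^\natural_-$ (i.e.\ passing to $\sigma$-invariant highest weights) collapses each such class to a single simple module. This is also precisely why the restriction to $\mu^\natural=\mu^\natural_-$ is needed in part (2): without it, $\mc{KW}$ need not be semisimple.
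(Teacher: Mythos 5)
Your proof is correct and is essentially the paper's own argument: the paper simply declares the corollary ``immediate from Proposition~\ref{prop:central ch of tame}'', i.e.\ semisimplicity follows from the standard facts that extensions live within central-character blocks and that finite-dimensional simples over $\osp(\ell|2n)$ have no self-extensions (since $\G_\even$ is semisimple, $\h$ acts semisimply on any finite-dimensional module), combined with the separation of central characters among the generating simples. You rightly identify that in case (2) with $k=1$ and $\la_n\ge m-1$ the proposition alone gives only finiteness and one must invoke Remark~\ref{rem:part32} to see that exactly one weight in the class, namely $\la(0)^\natural$, satisfies $\nu^\natural=\nu^\natural_-$; this is the one step the paper's ``immediate'' glosses over, and your treatment of it is correct (only note that the linkage principle should be justified via the full center acting by generalized eigenvalues on an indecomposable module, not merely the quadratic Casimir).
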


\begin{thm}\label{thm:KW:bottom}
Let $\chi$ be a central character of a finite-dimensional irreducible $\G$-module.
\begin{itemize}
\item[(1)] Let $\G=\osp(2m+1|2n)$. Then there exists a unique $\mu\in\mc{H}(n|m)$ such that
$L(\mu^\natural)$ is tame, $\chi=\chi_{\mu^\natural}$, and  $\mu^\natural \preccurlyeq \la^\natural$  for every $\la\in\mc{H}(n|m)$ with $\chi_{\la^\natural}=\chi$. Furthermore, any $\mu^\natural$ with $L(\mu^\natural)$ tame is the minimum in $\{\,\la^\natural\,|\,\la\in\mc{H}(n|m), \,\chi_{\la^\natural}=\chi_{\mu^\natural}\,\}$ with respect to $\preccurlyeq$.

\item[(2)] Let $\G=\osp(2m|2n)$ with $m\ge 2$. Then there exists a unique $\mu\in\mc{H}(n|m)$ such that $L(\mu^\natural)$ is tame,   $\chi=\chi_{\mu^\natural}$, and
$\mu^\natural \preccurlyeq \la^\natural$ (respectively, $\mu^\natural \preccurlyeq \la^\natural_-$)
for  every $\la\in\mc{H}(n|m)$ with $\chi_{\la^\natural}=\chi$  (respectively, $\chi_{\la_-^\natural}=\chi$). Furthermore, any $\mu^\natural$ with  $L(\mu^\natural)$ tame and  $\mu^\natural=\mu^\natural_-$  is the minimum of $\{\,\la^\natural,\la^\natural_-\,|\,\la\in\mc{H}(n|m),\, \chi_{\la^\natural}=\chi_{\mu^\natural}\,\}$ with respect to $\preccurlyeq$.
\end{itemize}
\end{thm}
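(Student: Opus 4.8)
Both ``furthermore'' assertions reduce to two facts: (a) every linkage class $\chi$ of finite-dimensional irreducible modules contains a $\mu\in\mc H(n|m)$ with $L(\mu^\natural)$ tame and $\chi_{\mu^\natural}=\chi$; and (b) any such tame $\mu^\natural$ is $\preccurlyeq$-minimal in its class, i.e. $\mu^\natural\preccurlyeq\la^\natural$ for every $\la\in\mc H(n|m)$ with $\chi_{\la^\natural}=\chi$ (and, for $\osp(2m|2n)$, also $\mu^\natural\preccurlyeq\la_-^\natural$ whenever $\chi_{\la_-^\natural}=\chi$). Granting (a) and (b): a minimum of a poset is unique, so the $\mu$ of (a) is unique; and if $L(\mu'^\natural)$ is another tame module with $\chi_{\mu'^\natural}=\chi$, then (b) applied to the $\mu$ of (a) gives $\mu^\natural\preccurlyeq\mu'^\natural$, while Proposition \ref{prop:central ch of tame} forces $\mu'^\natural=\mu^\natural$ in all cases except $\G=\osp(2m|2n)$ with $k=1$; in that exceptional case imposing $\mu'^\natural=\mu'^\natural_-$ places $\mu'$ in the finite set of Remark \ref{rem:part32}, whose $\preccurlyeq$-minimal element $\la(0)^\natural$ satisfies $\la(0)^\natural=\la(0)^\natural_-$, so again $\mu'=\mu$. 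Throughout I would treat $\osp(2m+1|2n)$ and $\osp(2m|2n)$ separately, reducing the $\la_-^\natural$-comparisons in the even case to the $\la^\natural$-comparisons via Lemma \ref{lem:sigma tame} and \eqref{eq:sigma twist}.

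\textbf{Existence, part (a).} Fix $\la\in\mc H(n|m)$ with $\chi_{\la^\natural}=\chi$, of atypicality $k$. Passing to $\mf b^{\texttt{odd}}$, I would read off the ``core'' of $\chi$ from $\la^{\mf b^{\texttt{odd}}}+\rho^{\mf b^{\texttt{odd}}}$: writing it as a sequence of (half-)integers and deleting the $2k$ entries paired off by a maximal mutually orthogonal set of isotropic roots leaves, by \cite[Theorem 2.30]{CW}, data depending only on $\chi$; after normalising signs inside the reduced Weyl group this is a pair of strictly decreasing nonnegative sequences $a_1>\cdots>a_{n-k}$, $b_1>\cdots>b_{m-k}$. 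I would then let $\mu$ be the hook partition for which $\mu^{\mf b^{\texttt{odd}}}+\rho^{\mf b^{\texttt{odd}}}$ has exactly this free part together with the fixed ``trivial-$\osp$'' tail of Lemma \ref{lem:htwt:bodd-1}(1), resp.\ Lemma \ref{lem:htwt:bodd-2}(1). One must verify that this is indeed $\la'^{\mf b^{\texttt{odd}}}$ for some $\la'\in\mc H(n|m)$ --- this is where the hypothesis that $\chi$ comes from a finite-dimensional module enters, the core having been cut from an honest hook-partition weight so that the hook inequalities persist --- and then Theorem \ref{thm:classification B} (resp.\ Theorem \ref{thm:classification D}, together with Lemma \ref{lem:sigma tame} and \eqref{eq:sigma twist}), combined with Lemmas \ref{lem:htwt:bodd-1}--\ref{lem:htwt:bodd-2}, shows $L(\mu^\natural)$ is tame of atypicality $k$ with $\chi_{\mu^\natural}=\chi$.

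\textbf{Minimality, part (b) --- the main obstacle.} This is the substantive step. I would prove $\la^\natural-\mu^\natural\in\Z_{\ge0}\Phi^+_{\mf b^{\texttt{st}}}$ by a combinatorial descent on the coordinate sequence $\la^\natural+\rho^{\mf b^{\texttt{st}}}=(a_1,\dots,a_n\mid b_1,\dots,b_m)$ (with $a_1>\cdots>a_n$, $b_1>\cdots>b_m$, all $b_j>0$ for $B$ by \eqref{eq:rhost}). The linkage data fixes the core, so $\la^\natural+\rho^{\mf b^{\texttt{st}}}$ is obtained from $\mu^\natural+\rho^{\mf b^{\texttt{st}}}$ by a sequence of elementary ``upward moves'' on the $k$ atypical pairs: flipping a pair of type $\delta_i-\epsilon_j$ to type $\delta_i+\epsilon_j$ (replacing a coordinate $-v$ by $+v$, hence adding $2v\,\delta_i$, a nonnegative integral multiple of the positive root $\delta_i$ when $\ell$ is odd, of $2\delta_i$ when $\ell$ is even since all coordinates are then integers); raising the common value $v\to v'$ of a $\delta_i+\epsilon_j$-pair (which raises the $\delta_i$- and $\epsilon_j$-coordinates together and, after reordering, adds a nonnegative combination of roots $\delta_i\pm\delta_{i'},\ \epsilon_j\pm\epsilon_{j'},\ \delta_i+\epsilon_j$ --- here one writes the change as $\sum c_{ij}(\delta_i+\epsilon_j)$ plus sorting corrections, using a transportation matrix with equal row/column totals $v'-v$); and, in the $D$-case, the $\sigma$-twist $\epsilon_m\mapsto-\epsilon_m$, handled by Lemma \ref{lem:sigma tame}. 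The point needing a short argument from the hook inequalities is that the bottom $\mu$ is forced to realise its atypical pairs in the minimal admissible $\delta_i-\epsilon_j$-configuration, so that no move ever lowers the weight; composing the moves then yields the claim. In the $\osp(2m|2n)$, $k=1$ case I would instead argue directly from Remark \ref{rem:part32}: there the relevant $\la$ are exactly the $\la(x)$, one has $\mu=\la(0)$, and $\la(x)^\natural-\la(0)^\natural=\sum_i\gamma_i(\delta_i+\epsilon_m)$ while $\la(x)_-^\natural-\la(0)^\natural=\sum_i\gamma_i(\delta_i-\epsilon_m)$ for suitable $\gamma_i\ge0$ with $\sum_i\gamma_i=(\la(x)^\natural,\epsilon_m)$, both manifestly in $\Z_{\ge0}\Phi^+_{\mf b^{\texttt{st}}}$.
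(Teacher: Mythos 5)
Your overall architecture --- produce a tame weight in each block, invoke Proposition~\ref{prop:central ch of tame} for uniqueness, and separately show that a tame weight lies $\preccurlyeq$-below everything in its block --- has the right shape, and your treatment of the $\osp(2m|2n)$, $k=1$ case via Remark~\ref{rem:part32} and of the $\la_-^\natural$ comparisons via $\sigma$ is what is needed. The load-bearing step, however, is your part (b), and there the proposal has a genuine gap. The claim that $\la^\natural+\rho$ is obtained from $\mu^\natural+\rho$ by ``elementary upward moves'' each contributing an element of $\Z_{\ge 0}\Phi^+_{\mf b^{\texttt{st}}}$ is essentially the statement to be proved, and the ingredients you name without supplying are exactly the hard ones: a matching between the atypical pairs of $\la$ and those of $\mu$ under which each pair-value of the tame $\mu$ is $\le$ the matched pair-value of $\la$ (not automatic, since the admissible value sets for $+$-type and $-$-type pairs are constrained differently against the core); the fact that after re-sorting the coordinate strings the increment is still a nonnegative \emph{integral} combination of positive roots (for $\ell=2m$ a single flip spreads $2v$ over several $\delta$-coordinates whose individual increments need not be even, so it is not a multiple of $2\delta_i$ as written, even if it still lies in the cone); and the ``transportation matrix with sorting corrections'', which is asserted rather than constructed. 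Part (a) has a smaller unverified step of the same kind: that the ``core plus tame tail'' weight really is $\mu^{\mf b^{\texttt{odd}}}+\rho^{\mf b^{\texttt{odd}}}$ for some $\mu\in\mc{H}(n|m)$; the converse of Lemma~\ref{lem:htwt:bodd-1}(1) is not available in the paper and would need proof.

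The paper's proof avoids all of this with one device your proposal is missing: an explicit one-step descent. If $L(\la^\natural)$ is not tame, then by Theorem~\ref{thm:classification B} there is an unmatched atypical pair $a_i=b_j>0$ with $-b_j\notin\{a_1,\dots,a_n\}$; replacing it by $(-\tilde b_j,\tilde b_j)$ with $\tilde b_j\le b_j$ minimal admissible (formula \eqref{formula:btilde}, with $\tilde b_j\in\Z_+$ in the even case) yields $\mu\in\mc{H}(n|m)$ with $\chi_{\mu^\natural}=\chi_{\la^\natural}$ and $\mu^\natural\precneqq\la^\natural$, where the inequality only has to be checked for this single local move. Iterating terminates at a tame weight $\preccurlyeq\la^\natural$, and Proposition~\ref{prop:central ch of tame} identifies the endpoint with the unique tame weight of the block; the minimality of an arbitrary tame weight then follows by contradiction rather than by a direct global decomposition of $\la^\natural-\mu^\natural$. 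I would replace your part (b) by this descent argument (or else genuinely construct the matching and the positive-root decomposition, which is considerably more work than the sketch suggests).
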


\begin{proof}

(1) Suppose that $\G=\osp(2m+1|2n)$.
Let $\la\in\mc{H}(n|m)$ and consider the weight $\la^\natural$. We first claim that if $\la^\natural$ is not the highest weight of a tame module, then there exists $\mu\in \mc{H}(n|m)$ such that
\begin{equation*}\label{aux:eqn3}
\text{$L(\mu^\natural)$ is tame}, \quad \chi_{\mu^\natural}=\chi_{\la^\natural}\quad\text{and}\quad \mu^\natural \preccurlyeq \la^\natural.
\end{equation*}
We give an algorithm to obtain such a $\mu^\natural$ from a weight $\la^\natural$.
Let $\rho=\rho^{{\mf b}^{\texttt{st}}}$.
Consider $\la^\natural+\rho=\sum_{i=1}^na_i\delta_i+\sum_{j=1}^mb_j\epsilon_j$, which we write
\begin{align*}
\la^\natural+\rho=(\,a_1,\ldots,a_n\,|\,b_1,\ldots,b_m\,).
\end{align*}
Since $\la^\natural$ is not the highest weight of a tame module, there exists $b_j>0$ such that  $a_i=b_j$ for some $a_i$ but $-b_j\not\in\{a_1,\ldots,a_n\}$ by Theorem \ref{thm:classification B}. We assume that $b_j$ is minimal with such property. Now define
\begin{align}\label{formula:btilde}
\tilde{b}_j:=\min\left\{\,x\in \tfrac{1}{2}+\Z_+\,|\,x\le b_j,\ x\not=b_k,\ {-}x\not=a_i  \text{ for all $ k>j$ and $i$}\,\right\}.
\end{align}
Then we let $\mu\in\mc{H}(n|m)$ be the unique hook partition determined by  $\mu^\natural+\rho=\sum_{i=1}^nc_i\delta_i+\sum_{j=1}^md_j\epsilon_j$, where
\begin{align*}
\{c_1,\ldots,c_n\}&=\{a_1,\ldots,\widehat{a}_i,\ldots,a_n\}\cup\{-\tilde{b}_j\},\\
\{d_1,\ldots,d_m\}&=\{b_1,\ldots,\widehat{b}_j,\ldots,b_m\}\cup\{\tilde{b}_j\}.
\end{align*}
It is straightforward to verify that $\mu$ is well-defined. By definition, it is clear that $\chi_{\la^\natural}=\chi_{\mu^\natural}$. Also, we can check without difficulty that $\mu^\natural$ is strictly smaller than $\la^\natural$ with respect to $\preccurlyeq$, that is, $\lambda^\natural-\mu^\natural\in\Z_+\Phi^+_{\mf b^{\texttt{st}}}\setminus\{0\}$.
Now, if $L(\mu^\natural)$ is tame, then we are done.  If not, then we continue this process, which must stop in finitely many steps, to obtain a highest weight of a tame module. This proves our claim.
The uniqueness of $\mu^\natural$ in the claim follows from Proposition \ref{prop:central ch of tame}(1).

Now, suppose that $L(\mu^\natural)$ is tame, but $\mu^\natural$ is not a ``bottom'' of $\chi_{\mu^\natural}$, i.e., there exists $\la\in\mc{H}(n|m)$ such that $\la^\natural\precneqq \mu^\natural$ and $\chi_{\la^\natural}=\chi_{\mu^\natural}$. Then, by Proposition \ref{prop:central ch of tame}(1), $L(\la^\natural)$ is not tame, and hence, by our above discussion, we can find a $\gamma\in\mc{H}(n|m)$ such that $L(\gamma^\natural)$ is tame, $\chi_{\gamma^\natural}=\chi_{\la^\natural}$, and $\gamma^\natural\precneqq \la^\natural$. Thus, $\gamma^\natural\precneqq \mu^\natural$, and both give rise to tame modules and have the same central character.  This contradicts Proposition \ref{prop:central ch of tame}(1) and proves (1).

(2) Suppose that $\G=\osp(2m|2n)$.  Let $k$ be the degree of atypicality for $\chi$.
We assume that $k\geq 1$ since it is clear in the typical case.
Let $\la\in \mc{H}(n|m)$ be given such that $\chi=\chi_{\la^\natural}$.
Then by similar arguments as in (1), we can show that there exists a $\mu\in \mc{H}(n|m)$ such that $L(\mu^\natural)$ is tame, $\chi=\chi_{\mu^\natural}$, and $\mu^\natural \preccurlyeq \la^\natural$. One difference here is that one has $\tilde{b}_j\in\Z_+$  in \eqref{formula:btilde}. Note that if $k>1$, then we have $\mu_{n+1}<m$ by Theorem \ref{thm:classification D}, and if $k=1$, then we may also assume by Proposition \ref{prop:central ch of tame}(3) and Remark \ref{rem:part32} that $\mu_{n+1}<m$.
This implies that $\mu^\natural$ is a unique highest weight such that $L(\mu^\natural)$ is tame, $\chi=\chi_{\mu^\natural}$, and $\mu^\natural \preccurlyeq \la^\natural$ for any $\la\in \mc{H}(n|m)$ with $\chi=\chi_{\la^\natural}$ by Proposition \ref{prop:central ch of tame}.

Now, let $\la\in \mc{H}(n|m)$ be given such that $\chi=\chi_{\la_-^\natural}$. Since $\mu^\natural=\mu^\natural_-$, we have $\chi_{\mu^\natural}=\chi_{\mu^\natural_-}=\chi_{\la^\natural}$, which implies that $\mu^\natural \preccurlyeq \la^\natural$ and hence, applying $\sigma$, we have $\mu^\natural \preccurlyeq \la^\natural_-$. The last statement in (2) is proved analogously as in (1), now using Proposition \ref{prop:central ch of tame}(2)--(3) and Remark \ref{rem:part32}.
\end{proof}

\begin{example}
We illustrate the algorithm in Theorem \ref{thm:KW:bottom} with an example.
Consider the $\osp(7|6)$ and $\la=(6,6,5,2,1,1)$ so that we have
\begin{align*}
\la^\natural+\rho=\left(\frac{11}{2},\frac{9}{2},\frac{5}{2}\,\Big\vert\,\frac{11}{2},\frac{5}{2},\frac{1}{2}\right).
\end{align*}
Applying the process given in the proof of Theorem \ref{thm:KW:bottom}(1) once gives the weight
\begin{align*}
\nu^\natural+\rho=\left(\frac{11}{2},\frac{9}{2},-\frac{3}{2}\,\Big|\frac{11}{2},\frac{3}{2},\frac{1}{2}\right),
\end{align*}
with $\nu=(6,6,1,1,1,1)$. Now, applying the process again to $\nu^\natural$, we get
\begin{align*}
\gamma^\natural+\rho=\left(\frac{9}{2},-\frac{3}{2},-\frac{5}{2}\Big|\frac{5}{2},\frac{3}{2},\frac{1}{2}\right),
\end{align*}
with $\gamma=(5,0,0,0,0,0)$, which is a highest weight of a tame module.
\end{example}

\begin{rem}
It can be shown that analogous statements as in Theorem \ref{thm:KW:bottom} hold also for tame modules over the exceptional Lie superalgebras $D(2|1,\alpha)$, $G(3)$, and $F(3|1)$. Thus, we conclude that the ``bottoms'' of each finite-dimensional block for type II Lie superalgebras are highest weights of tame modules.
\end{rem}

\begin{prop}\label{prop:aux01}
Let $\la\in\mc{H}(n|m)$ such that $L(\la^\natural)$ is tame of degree of atypicality $k\geq 1$. Let ${\mf b}$ and ${\mf l}$ be the   Borel and Levi subalgebras  in \eqref{eq:borel levi odd-1}, \eqref{eq:borel levi odd-2}, and \eqref{eq:borel levi odd-3}, respectively. Then  the parabolic subalgebra $\mf{p}={\mf l}+{\mf b}$ is admissible for $\chi_{\la^{\mf b}}$, and
{$$\left(\la^{\mf b}+\rho^{\mf b},\tfrac{\beta}{(\beta,\beta)}\right)>0, \quad\quad \text{for } \beta\in\Phi^+_{\mf b}(\mf u_\even).$$}
\end{prop}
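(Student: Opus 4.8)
The plan is to treat the two assertions separately, the inequality on $\Phi^+_{\mf b}(\mf u_\even)$ being the more direct one. For it I would insert the explicit expressions for $\la^{\mf b}+\rho^{\mf b}$ supplied by Lemma \ref{lem:htwt:bodd-1}(1) (type $B$) and Lemma \ref{lem:htwt:bodd-2}(1) (type $D$), and evaluate $(\la^{\mf b}+\rho^{\mf b},\beta/(\beta,\beta))$ on each even positive root of $\mf u$. Since the Levi $\mf l$ in \eqref{eq:borel levi odd-1}, \eqref{eq:borel levi odd-2}, \eqref{eq:borel levi odd-3} is built on the innermost coordinates, $\Phi^+_{\mf b}(\mf u_\even)$ is precisely the set of even positive roots of $\G$ involving one of the remaining (``outer'') $\delta$- or $\ep$-coordinates; in $\la^{\mf b}+\rho^{\mf b}$ those outer coordinates form the strictly decreasing positive sequences $a_1>a_2>\cdots$ and $b_1>b_2>\cdots$, whereas the inner coordinates are small ($\pm\tfrac12$, $0$, or mutually equal). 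Using $(\delta_i,\delta_i)=-1$ and $(\ep_j,\ep_j)=+1$ one sees that each such pairing is a sum or difference dominated by an outer coordinate, hence positive; the only borderline roots, of the form $\ep_a\pm\ep_b$, $\delta_a\pm\delta_b$, $2\delta_a$ or $\ep_a$ with exactly one inner index, retain strict positivity by the precise inequalities recorded in those lemmas together with the tameness hypothesis (if, say, an outer coordinate equalled an inner one, one could enlarge the distinguished set $T_{\la^{\mf b}}$ and contradict that the degree of atypicality is exactly $k$). When $\la_{n+1}=m$ in type $D$ one has $\mf l_\even=\h$, every coordinate of $\la^{\mf b}+\rho^{\mf b}$ has the strictly decreasing positive shape, and the inequality is immediate.

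For admissibility I must show that every finite-dimensional irreducible $\G$-module $V$ with central character $\chi_{\la^{\mf b}}=\chi_{\la^\natural}$ has $\mf b$-highest weight $\gamma'$ satisfying $(\gamma'+\rho^{\mf b},\tfrac{2\beta}{(\beta,\beta)})\ge 0$ for all $\beta\in\Phi^+_{\mf b}(\mf u_\even)$. Such a $V$ is $L_{\mf b^{\texttt{st}}}(\G,\nu^\natural)$ or $L_{\mf b^{\texttt{st}}}(\G,\nu_-^\natural)$ for some $\nu\in\mc H(n|m)$ with $\chi_{\nu^\natural}=\chi_{\la^\natural}$, the second case reducing to the first by the $\sigma$-twist of Lemma \ref{lem:sigma tame} and \eqref{eq:sigma twist}; its $\mf b$-highest weight is $\gamma'=\nu^{\mf b}$, given by the Frobenius coordinates of $\nu$ relative to the $\ep\delta$-sequence of $\mf b$ through \eqref{eq:aux004} and \eqref{eq:aux001}. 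I then want to pin down the shape of $\nu^{\mf b}+\rho^{\mf b}$, and two facts do this. First, since $\chi_{\nu^\natural}=\chi_{\la^\natural}$ has degree of atypicality $k$, the description of central characters \cite[Theorem 2.30]{CW} forces $\nu^{\mf b}+\rho^{\mf b}$, after its $k$ atypical pairs of coordinates are removed, to keep the same ``core'' multisets of coordinates as $\la^{\mf b}+\rho^{\mf b}$. Second, $\nu^{\mf b}$ is $\G_\even$-dominant for $(\mf b)_\even=(\mf b^{\texttt{st}})_\even$, so $\nu^{\mf b}+\rho^{\mf b}_\even$ has strictly decreasing $\delta$- and $\ep$-coordinates, while $\rho^{\mf b}_\odd$ is the fixed vector explicitly read off from the $\ep\delta$-sequence of $\mf b$. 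Combining these with the fact that $\la^\natural$ is the bottom of its block (Theorem \ref{thm:KW:bottom}) shows that $\nu^{\mf b}+\rho^{\mf b}$ has the same outer/inner shape as $\la^{\mf b}+\rho^{\mf b}$, and the required inequalities follow from the same case analysis as in the first paragraph, now only in the non-strict form that admissibility demands.

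The main obstacle is this last step: a general $\nu$ in the block need not be tame, Lemmas \ref{lem:htwt:bodd-1} and \ref{lem:htwt:bodd-2} do not apply to it, and the shape of $\nu^{\mf b}+\rho^{\mf b}$ must be recovered from the central-character identity and $\G_\even$-dominance alone --- the delicate point being to exclude an atypical coordinate of $\nu^{\mf b}+\rho^{\mf b}$ that is large and lands in an outer slot. In the single remaining case of type $D$ with $k=1$ and $\la_n\ge m-1$, where by Proposition \ref{prop:central ch of tame}(3) and Remark \ref{rem:part32} the block carries finitely many additional highest weights $\la(x)^\natural$, one checks these one at a time against the even roots of the nilradical of the parabolic attached to the Borel \eqref{ABC:diagram:D7}; this is a bounded computation. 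In all other cases Proposition \ref{prop:central ch of tame} shows the relevant highest weights are few and differ from $\la^\natural$ in a controlled way, so once the shape of $\gamma'+\rho^{\mf b}$ is known, both parts reduce to the elementary sign computation of the first paragraph.
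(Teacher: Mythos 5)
The strict-inequality half of your argument is fine and is essentially what the paper does: it is read off directly from Lemmas \ref{lem:htwt:bodd-1}(1) and \ref{lem:htwt:bodd-2}(1). The gap is in the admissibility half, and it sits exactly where you flag "the main obstacle": you never actually pin down $\nu^{\mf b}+\rho^{\mf b}$ for a non-tame $\nu$ in the block. Your claim that the central character, $\G_\even$-dominance and the bottom-of-block property force $\nu^{\mf b}+\rho^{\mf b}$ to have "the same outer/inner shape" as $\la^{\mf b}+\rho^{\mf b}$ is false: for a non-tame $\nu$ the atypical pair of entries can be large and occupy slots where the tame bottom has $\pm\tfrac12$ or $0$ (see the $\osp(7|6)$ example following Theorem \ref{thm:KW:bottom}, where $\la^\natural+\rho=(\tfrac{11}{2},\tfrac92,\tfrac52\,|\,\tfrac{11}{2},\tfrac52,\tfrac12)$ versus the bottom $(\tfrac92,-\tfrac32,-\tfrac52\,|\,\tfrac52,\tfrac32,\tfrac12)$). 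What one actually needs is only the one-sided bound $(\nu^{\mf b}+\rho^{\mf b},\tfrac{\beta}{(\beta,\beta)})\ge 0$, and the paper obtains it by a different mechanism: the reduction algorithm in the proof of Theorem \ref{thm:KW:bottom} shows that $\la\subseteq\nu$ as partitions, whence by the Frobenius-coordinate formula \eqref{eq:aux004} one gets $(\nu^{\mf b}+\rho^{\mf b},\tfrac{\beta}{(\beta,\beta)})\ge(\la^{\mf b}+\rho^{\mf b},\tfrac{\beta}{(\beta,\beta)})>0$ for the roots $\delta_i+\delta_j$, $2\delta_p$, $\ep_s+\ep_t$, $\ep_q$, with the remaining roots handled by dominance. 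That containment of Young diagrams is the key idea missing from your write-up.

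A second, more localized problem occurs in type $D$ with $\la_{n+1}=m$. There the Borel is \eqref{ABC:diagram:D7}, whose signed $\ep\delta$-sequence ends with a $\delta$ and has $s(\mf b)=-1$, and the paper explicitly notes that no Frobenius-coordinate formula for $\nu^{\mf b}$ is available in that situation; so your appeal to \eqref{eq:aux004}/\eqref{eq:aux001} to identify $\gamma'=\nu^{\mf b}$ does not apply. The paper instead computes $\nu^{\mf b}+\rho^{\mf b}$ by pushing $\nu^\natural+\rho$ through the explicit sequence of odd reflections \eqref{seq:odd:ref} using \eqref{eq:odd reflection rho}, case by case according to whether $\nu^\natural+\rho$ is of the form \eqref{eq:aux003}, \eqref{eq:aux111}, \eqref{eq:aux112}, or (for $\nu_-^\natural$) \eqref{eq:aux113}, \eqref{eq:aux114}; your "bounded computation" over the finite set of Remark \ref{rem:part32} is not a substitute, since it must be carried out uniformly in $\la$ and still requires knowing $\nu^{\mf b}$, which is precisely what the odd-reflection bookkeeping supplies.
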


\begin{proof}
Let $\rho=\rho^{\mf b^{\texttt{st}}}$ and consider $\la^\natural+\rho=\sum_{i=1}^n a_i\delta_i+\sum_{j=1}^m b_j\ep_j$, where the degree of atypicality of $\la^\natural$ is $k\ge 1$. Recall that ${\mf u}$ is the nilradical of ${\mf p}$.

Suppose first that $\G=\osp(2m+1|2n)$. In this case, we have
\begin{equation*}
\Phi^+_{\mf b}(\mf u_\even)=\{\,\delta_i\pm\delta_j, 2\delta_p, \epsilon_s\pm\epsilon_t,\epsilon_q\,\},
\end{equation*}
where the indices are over $1\leq i <j\leq n$ with $i\leq n-k$, $1\leq p\leq n-k$, and $1\leq s<t\leq m$ with $s\leq m-k$, $1\leq q\leq m-k$.
By Lemma \ref{lem:htwt:bodd-1}(1), it is clear that we have
\begin{align}\label{eq:aux002}
\left(\la^{\mf b}+\rho^{\mf b},\tfrac{\beta}{(\beta,\beta)}\right)>0,  \quad\quad \text{ for $\beta\in\Phi^+_{\mf b}(\mf u_\even)$}.
\end{align}
Given a $\mu\in\mc{H}(n|m)$ such that $\chi_{\mu^\natural}=\chi_{\la^\natural}$.
The algorithm in the proof of Theorem \ref{thm:KW:bottom}(1) implies that starting from $\mu^\natural$ we eventually get to $\la^\natural$.
{ From this we see that $\la\subseteq\mu$ as partitions, and thus $(\mu^{\mf b}+\rho^{\mf b},\tfrac{\beta}{(\beta,\beta)})\ge (\la^{\mf b}+\rho^{\mf b},\tfrac{\beta}{(\beta,\beta)})>0$, for $\beta=\delta_i+\delta_j, 2\delta_p, \epsilon_s+\epsilon_t, \ep_q\in \Phi^+_{\mf b}(\mf u_\even)$ by \eqref{eq:aux004} and Lemma \ref{lem:htwt:bodd-1}(1). It is also clear that $(\mu^{\mf b}+\rho^{\mf b},\tfrac{\beta}{(\beta,\beta)})\ge 0$ for the other roots in $\Phi^+_{\mf b}(\mf u_\even)$.} This together with \eqref{eq:aux002} implies that $\mf p$ is admissible for $\chi_{\la^\mf b}$.

Now suppose that $\G=\osp(2m|2n)$.

Consider first the case when $\la^\natural=\la^\natural_-$. In this case, we have
\begin{equation*}\label{eq:roots for u}
\Phi^+_{\mf b}(\mf u_\even)=\{\,\delta_i\pm\delta_j, 2\delta_p, \epsilon_s\pm\epsilon_t\,\},
\end{equation*}
where the indices are over $1\leq i <j\leq n$ with $i\leq n-k$, $1\leq p\leq n-k$, and $1\leq s<t\leq m$ with {$s\leq m-k-e(\la)$}. Recall that $e(\la)$ is given in \eqref{eq:e(lambda)}. Again, by Lemma \ref{lem:htwt:bodd-2}(1) it is clear that \eqref{eq:aux002} holds in this case. Let $\mu\in\mc{H}(n|m)$ such that $\chi_{\mu^\natural}=\chi_{\la^\natural}$. Then a similar argument as in the case of $\G=\osp(2m+1|2n)$ shows that $\la\subseteq\mu$ as partitions, and we again arrive that {$(\mu^{\mf b}+\rho^{\mf b},\tfrac{\beta}{(\beta,\beta)})\ge (\la^{\mf b}+\rho^{\mf b},\tfrac{\beta}{(\beta,\beta)})>0$, for $\beta=\delta_i+\delta_j, 2\delta_p, \ep_s+\ep_t\in\Phi^+_{\mf b}(\mf u_\even)$.
It is also clear that $(\mu^{\mf b}+\rho^{\mf b},\tfrac{\beta}{(\beta,\beta)})\ge 0$ for the other roots in $\Phi^+_{\mf b}(\mf u_\even)$.}
Now suppose that $\chi_{\nu_-^\natural}=\chi_{\la^\natural}$, for some $\nu\in\mc{H}(n|m)$. Since  $\chi_{\nu^\natural}=\chi_{\la^\natural}$,  we get $\la\subseteq\nu$ again and hence {$(\nu^{\mf b}+\rho^{\mf b},\tfrac{\beta}{(\beta,\beta)})\ge 0$}, for $\beta\in\Phi^+_{\mf b}(\mf u_\even)$. Note that both Dynkin diagrams of $\mf b$ and $\mf b^{\texttt{st}}$ end with an $\epsilon$. Thus, by \eqref{eq:aux001}, we see that $(\nu_-^\mf b+\rho^\mf b,\sigma(\beta))=(\nu^\mf b+\rho^\mf b,\beta)$, for $\beta\in\Phi^+_{\mf b}(\mf u_\even)$. This implies that $\mf p$ is admissible for $\chi_{\la^\mf b}$.

Now suppose that $\la^\natural\not=\la_-^\natural$. As in \eqref{aux:eqn1} we write $\la^\natural+\rho$ as a sequence
\begin{align*}
\la^\natural+\rho =(\ldots a_i\ldots|\ldots b_{m-1},b_m),
\end{align*}
and we have $a_i=b_m=b>0$ for some $i$. Recall that the Borel subalgebra $\mf{b}$ has the Dynkin diagram \eqref{ABC:diagram:D7}.
The sequence of odd reflections or the corresponding odd simple roots  that transforms $\mf b^{\texttt{st}}$ to $\mf b$ is as follows:
\begin{align}\label{seq:odd:ref}
\begin{split}
&\delta_n-\epsilon_1,\delta_{n-1}-\epsilon_1,\ldots,\delta_i-\epsilon_1,\\
&\delta_n-\epsilon_2,\delta_{n-1}-\epsilon_2,\ldots,\delta_i-\epsilon_2,\\
&\qquad\vdots\qquad\vdots\qquad\vdots\\
&\delta_n-\epsilon_{m-1},\delta_{n-1}-\epsilon_{m-1},\ldots,\delta_i-\epsilon_{m-1},\\
&\delta_n+\epsilon_m,\delta_{n-1}+\epsilon_m,\ldots,\delta_{i+1}+\epsilon_m.
\end{split}
\end{align}
Here we apply the odd reflections successively in the above order starting from $\delta_n-\ep_1$.
By \eqref{eq:odd reflection rho} we have $\la^\mf b+\rho^\mf b=\la^\natural+\rho$. Recall that $\mf{l}=\h+\mf{sl}(1|1)$ and $\mf{p}=\mf{l}+\mf{b}$.  In this case
$\Phi_{\mf b}^+(\mf u_\even)$ consists of all even positive roots (of $\mf b^{\texttt{st}}$). It is clear that we have $(\la^{\mf b}+\rho^{\mf b},\tfrac{\beta}{(\beta,\beta)})>0$, for $\beta\in\Phi^+_{\mf b}(\mf u_\even)$ and thus \eqref{eq:aux002} holds in this case.

Let $\mu\in\mc H(n|m)$ such that $\chi_{\mu^\natural}=\chi_{\la^\natural}$. Since the degree of atypicality equals one, one can derive that $\mu^\natural$ is such that either $L(\mu^\natural)$ is tame, or else $\mu^\natural+\rho$ is of the form
\begin{align}\label{eq:aux003}
\mu^\natural+\rho =(\ldots c\ldots|\ldots c\ldots b_{m-1}),
\end{align}
with $c>b_m>0$.

If $\mu^\natural+\rho$ is of the form \eqref{eq:aux003}, then we can apply the procedure in Theorem \ref{thm:KW:bottom} to get $\la\subseteq\mu$ as partitions, and again it follows that $(\mu^{\mf b}+\rho^{\mf b},\tfrac{\beta}{(\beta,\beta)})\ge 0$, for $\beta\in\Phi^+_{\mf b}(\mf u_\even)$.

Now, suppose that $L(\mu^\natural)$ is tame. There are two cases for $\mu^\natural+\rho$:
\begin{align}
\mu^\natural+\rho =(\ldots c\ldots\ldots|\ldots b_{m-1},c),\quad c>b,\label{eq:aux111}\\
\mu^\natural+\rho =(\ldots\ldots  d\ldots|\ldots b_{m-1},d),\quad d<b.\label{eq:aux112}
\end{align}
Recall the sequence of odd reflections \eqref{seq:odd:ref} that transforms $\mf b^{\texttt{st}}$ to $\mf b$.
Using again \eqref{eq:odd reflection rho} we see that,
in the case of \eqref{eq:aux111}, applying this sequence of odd reflections does not change shifted weight and hence $\mu^\mf b+\rho^{\mf b}=\mu^\natural+\rho$. In the case of \eqref{eq:aux112} we have $\mu^\mf b+\rho^{\mf b}=\mu^\natural+\rho +\sum_{s=i+1}^{k}\eta_s(\delta_s+\epsilon_m)$, with $\eta_s\in\{0,1\}$, for some $k>i$. Thus, in either case, we have $(\mu^{\mf b}+\rho^{\mf b},\tfrac{\beta}{(\beta,\beta)})\ge 0$, for $\beta\in\Phi^+_{\mf b}(\mf u_\even)$.

Finally, suppose that $\mu^\natural\not=\mu^\natural_-$, and $\chi_{\la^\natural}=\chi_{\mu^\natural_-}$.  We have the following possibilities for $\mu^\natural_-+\rho$:
\begin{align}
\mu_-^\natural+\rho &=(\ldots c\ldots\ldots|\ldots c\ldots -b'),\quad c>b'>0,\label{eq:aux113}\\
\mu_-^\natural+\rho &=(\ldots\ldots  d\ldots|\ldots\ldots b_{m-1},-d),\quad d>0\label{eq:aux114}.
\end{align}
In the case of \eqref{eq:aux113} we have $b'=b_{m-1}$, and hence $c>b_m$. In the case of \eqref{eq:aux114} $L(\mu_-^\natural)$ is tame. In either case, applying the same sequence of odd reflections above does not change the shifted weights, which implies that $(\mu^{\mf b}_-+\rho^{\mf b},\tfrac{\beta}{(\beta,\beta)})\ge 0$, for $\beta\in\Phi^+_{\mf b}(\mf u_\even)$.  Thus, $\mf p$ is admissible for $\chi_{\la^\mf b}$.
\end{proof}

\section{A proof of the Kac-Wakimoto conjecture}\label{sec:proof:KW}

In this section, we prove Conjecture \ref{conj:KW} for the ortho-symplectic Lie superalgebras. The following is a  particular case of \cite{Gor}.

\begin{prop}\label{prop:KW:trivial}
Conjecture \ref{conj:KW} holds for the trivial module over $\mf g=\osp(2k+1|2k)$, $\osp(2k|2k)$, or {$\osp(2k+2|2k)$}, for $k\geq 1$ and $\mf b=\mf b^{\texttt{odd}}$. That is, we have
\begin{align*}
1=\frac{1}{j}\,D_{\mf b}^{-1}
\sum_{w\in W}(-1)^{\ell(w)} w\left(  \frac{e^{\rho^{\mf b}}}{\prod_{\beta\in T}(1+e^{-\beta})} \right),
\end{align*}
where
\begin{equation*}
\begin{split}
j&=
\begin{cases}
k!2^k, & \text{for $\G=\osp(2k+1|2k),\ {\osp(2k+2|2k)}$},\\
k!2^{k-1}, & \text{for $\G=\osp(2k|2k)$},\\
\end{cases}\\
T&=
\begin{cases}
\{\,\ep_i-\delta_j\,|\,1\leq i,j\leq k\,\}, & \text{for $\G=\osp(2k+1|2k)$},\\
\{\,\delta_i-\ep_j\,|\,1\leq i,j\leq k\,\}, & \text{for $\G=\osp(2k|2k),\ {\osp(2k+2|2k)}$}.
\end{cases}
\end{split}
\end{equation*}
\end{prop}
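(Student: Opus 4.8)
The plan is to recast the asserted equality as a finite Weyl--type denominator identity and then to deduce it, with the correct normalization constant, from the denominator identities for basic Lie superalgebras of Gorelik \cite{Gor}. First I would clear $D_{\mf b}^{-1}$ and multiply through by $D_{\mf b,\odd}$, which is $W$-invariant: $W$ permutes the set $\Phi_{\mf b,\odd}$ of all odd roots and $e^{\beta/2}+e^{-\beta/2}$ is unchanged under $\beta\mapsto-\beta$, so $w(D_{\mf b,\odd})=D_{\mf b,\odd}$ for every $w\in W$. Writing $D_{\mf b,\odd}=e^{\rho^{\mf b}_\odd}\prod_{\beta\in\Phi^+_{\mf b,\odd}}(1+e^{-\beta})$, using $\rho^{\mf b}=\rho^{\mf b}_\even-\rho^{\mf b}_\odd$ and $T\subseteq\Phi^+_{\mf b,\odd}$, the assertion becomes
\begin{equation*}
j\,D_{\mf b,\even}=\sum_{w\in W}(-1)^{\ell(w)}\,w\!\left(e^{\rho^{\mf b}_\even}\prod_{\beta\in\Phi^+_{\mf b,\odd}\setminus T}(1+e^{-\beta})\right),
\end{equation*}
where by the classical Weyl denominator identity for the reductive Lie algebra $\G_\even$ one has $D_{\mf b,\even}=\sum_{w\in W}(-1)^{\ell(w)}w(e^{\rho^{\mf b}_\even})$. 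Since $\mf b^{\texttt{odd}}$ is a Borel subalgebra containing the distinguished set $T$, a maximal collection of $k$ mutually orthogonal isotropic odd simple roots, this is exactly the shape of identity treated in \cite{Gor}.

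The remaining work is to match the constant. Expanding the product over $\Phi^+_{\mf b,\odd}\setminus T$ as $\sum_S e^{-\sigma_S}$ over subsets $S$, with $\sigma_S=\sum_{\beta\in S}\beta$, and using that $\sum_{w\in W}(-1)^{\ell(w)}w(e^\mu)$ vanishes unless $\mu$ is $W$-regular, the right-hand side collapses to $D_{\mf b,\even}$ times a signed count of the $W$-orbits of those $S$ for which $\rho^{\mf b}_\even-\sigma_S$ is regular; this count is the content of the theorem of \cite{Gor}, and one checks by inspection of the $\osp$ root systems that it equals the stated $j$. Morally, the factor $k!$ comes from the $k!$ ways of ordering the orthogonal roots of $T$ (equivalently, the diagonal $S_k$ inside $W$), while the factor $2^{k}$ for $\G=\osp(2k+1|2k)$ and $\osp(2k+2|2k)$, respectively $2^{k-1}$ for $\G=\osp(2k|2k)$, is the order of the group of sign changes on the $\ep$-coordinates available inside $W$ (all of $\Z_2^{\,m}$ when $\ell$ is odd, only the even subgroup $\overline{\Z_2^{\,m}}$ when $\ell$ is even — which is exactly what drops $2^k$ to $2^{k-1}$ for $\osp(2k|2k)$, where $m=k$, while $m=k+1$ gives $2^k$ for $\osp(2k+2|2k)$). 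I would first verify the normalization by hand for $k=1$, where $\rho^{\mf b^{\texttt{odd}}}=\tfrac12(\sum_i\delta_i-\sum_j\ep_j)$ in the $\osp(2k+1|2k)$ and $\osp(2k+2|2k)$ cases and $\rho^{\mf b^{\texttt{odd}}}=0$ for $\osp(2k|2k)$, which makes those identities short, and then reduce the general signed count to the rank-one case by an induction that isolates the largest index.

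As an alternative that bypasses \cite{Gor} entirely, one can establish the displayed identity directly: set up a sign-reversing involution on the subsets $S$ with $\rho^{\mf b}_\even-\sigma_S$ singular, and group the regular ones by $W$-orbit; the surviving orbits are those whose representative lies in the dominant chamber, they can be matched with certain subsets of $T$, and they total exactly $j$, each contributing $+D_{\mf b,\even}$ — this is precisely the argument used for the classical and super Weyl denominator identities, specialized. In either approach, the main obstacle is the combinatorial bookkeeping: deciding which $\sigma_S$ land on walls of $\G_\even$, tracking the $\rho$-shifts correctly, and treating the three families uniformly — in particular the case $\osp(2k+2|2k)$, where the orthogonal and symplectic Weyl factors have different ranks, and the parity condition defining $\overline{\Z_2^{\,m}}$ in the even-$\ell$ case, which is responsible for the exact power of $2$ in $j$.
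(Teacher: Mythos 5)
Your reduction of the claimed identity to the polynomial identity $j\,D_{\mf b,\even}=\sum_{w\in W}(-1)^{\ell(w)}w\bigl(e^{\rho^{\mf b}_\even}\prod_{\beta\in\Phi^+_{\mf b,\odd}\setminus T}(1+e^{-\beta})\bigr)$ is correct, and your identification of the statement with the super Weyl denominator identity is exactly right --- the paper itself prefaces the proposition with the remark that it is a particular case of \cite{Gor}. The paper's actual proof, however, goes a different way: it computes one and the same Euler characteristic twice. On one hand, \cite[Propositions 3.1 and 8.1]{SV2} give ${\rm ch}\,\mc E({\rm Ind}_{\mf p}^{\G}\C)=2^k$ (resp.\ $2^{k-1}$) for a parabolic with Levi $\gl(k|k)$ or $\gl(k+1|k)$, and Remark \ref{form:Euler:1} transfers this to $\mf b^{\texttt{odd}}$; on the other hand, Theorem \ref{thm:CHR2} (essentially the $\gl(k|k)$ denominator identity, with constant $k!$) fed into the computation of Proposition \ref{prop:Euler=KW} expresses the same Euler characteristic as $\frac{1}{k!}D_{\mf b}^{-1}\sum_w(\cdots)$. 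Equating the two yields the identity with $j=k!\,2^k$ or $k!\,2^{k-1}$, so the powers of $2$ come out of Sergeev--Veselov's closed-form computation rather than from a count of $W$-orbits. Your route buys directness (the proposition \emph{is} the denominator identity, and citing Gorelik's normalization would finish it), while the paper's route reuses machinery it needs anyway and pins down $j$ by an independent calculation. That constant is the genuinely delicate point --- the acknowledgment records that a factor of $2$ was missing in an earlier version --- and it is the one place your proposal stays at the level of a heuristic (``one checks by inspection''). Indeed, your proposed $k=1$ sanity check already contains a slip: for $\osp(2k+2|2k)$ one has $\rho^{\mf b^{\texttt{odd}}}=0$, not $\tfrac12(\sum_i\delta_i-\sum_j\ep_j)$ (e.g.\ for $\osp(4|2)$ with simple roots $\ep_1-\delta_1,\ \delta_1-\ep_2,\ \delta_1+\ep_2$ one finds $\rho_\even=\rho_\odd=\ep_1+\delta_1$); the half-integral value occurs only for $\osp(2k+1|2k)$. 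So if you intend to avoid importing Gorelik's normalization wholesale and instead run your involution/induction argument, the base cases and the bookkeeping of which $\sigma_S$ land on walls must be done with more care than the sketch suggests.
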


\begin{proof}
{Let $\G=\osp(2k+1|2k)$, $\osp(2k|2k)$, or $\osp(2k+2|2k)$.
By \cite[Propositions 3.1 and 8.1]{SV2}, with certain $\mf p$ and $\mf l=\gl(k|k)$ (for $\G\neq \osp(2k+2|2k)$) or $\gl(k+1|k)$ (for $\G=\osp(2k+2|2k)$) with respect to the Borel subalgebra ${\mf b}={\mf b}^{\texttt{st}}$}, we have
\begin{equation}\label{eq:denom-1}
{\rm ch}\mc E({\rm Ind}_{\mf p}^{\G}M)=
\begin{cases}
2^k,\quad&\text{if }\G=\osp(2k+1|2k), {\osp(2k+2|2k)}, \\
2^{k-1},\quad&\text{if }\G=\osp(2k|2k),
\end{cases}
\end{equation}
where $M$ is the trivial ${\mf l}$-module. We note that in some special low rank cases, \eqref{eq:denom-1} was noticed in \cite[Examples 2.5 and 2.6]{CW1}. Now, by Remark \ref{form:Euler:1} we also get the same result if we choose the same $\mf p$ and $\mf l$ but now with respect to the Borel subalgebra ${\mf b}=\mf b^{\texttt{odd}}$.

By Theorem \ref{thm:CHR2}, the trivial $\mf l$-module $M$ satisfies the formula \eqref{formula:kw} with respect to $\underline{\mf b}={\mf b}\cap {\mf l}$ and  with the leading coefficient equals to $\frac{1}{k!}$. Now applying the same argument as in the proof of Proposition \ref{prop:Euler=KW} to $\mc E({\rm Ind}_{\mf p}^{\G}M)$ we get that
\begin{align}\label{eq:denom-2}
{\rm ch}\mc E({\rm Ind}_{\mf p}^{\G}M)=\frac{1}{k!}
D_{\mf b}^{-1}\sum_{w\in W}(-1)^{\ell(w)} w\left(  \frac{e^{\rho^{\mf b}}}{\prod_{\beta\in T}(1+e^{-\beta})} \right),
\end{align}
where $T=T_{0^{\mf b}}=T_{0^{\underline{\mf b}}}$. Combining \eqref{eq:denom-1} and \eqref{eq:denom-2}, we obtain the desired identity and hence the character formula \eqref{formula:kw} for the trivial $\G$-module.
\end{proof}

\begin{thm}\label{thm:main result}
Let $\G=\osp(\ell|2n)$ with $\ell=2m$ or $\ell=2m+1$ and let $\la\in\mc{H}(n|m)$ such that $L(\la^\natural)$ is tame of degree of atypicality $k\geq 1$.
\begin{itemize}
\item[(1)]
If $L(\la^\natural)$ is tame with respect to a Borel subalgebra $\mf b$ with a distinguished set $T_{\la^{\mf b}}\subseteq\Pi_{\mf b}$, then
\begin{align*}\label{KW:formula:B}
{\rm ch}L(\la^\natural)=\frac{1}{j_\la}
D_{\mf b}^{-1}\sum_{w\in W}(-1)^{\ell(w)} w\left(  \frac{e^{\la^\mf b+\rho^{\mf b}}}{\prod_{\beta\in T_{\la^\mf b}}(1+e^{-\beta})} \right),
\end{align*}
where
\begin{equation*}
j_\la=
\begin{cases}
k!2^k, & \text{if $\ell=2m+1$},\\
k!2^{k-1+e(\la)},& \text{if $\ell=2m$ and $\la_{n+1}<m$},\\
1, & \text{if $\ell=2m$ and $\la_{n+1}=m$}.
\end{cases}
\end{equation*}

\item[(2)] When $\ell=2m$, we have in addition the following character formula for $L(\la_-^\natural)$, which is tame with respect to $\sigma({\mf b})$:
\begin{align*}
{\rm ch}L(\la_-^\natural)
=\frac{1}{j_\la}
D_{\sigma(\mf b)}^{-1}\sum_{w\in W}(-1)^{\ell(w)} w\left(  \frac{e^{\la_-^{\sigma(\mf b)}+\rho^{\sigma(\mf b)}}}{\prod_{\beta\in T_{\la_-^{\sigma(\mf b)}}}(1+e^{-\beta})} \right),
\end{align*}
where $\sigma$ is defined in Section \ref{sec:Borels D} and $T_{\la_-^{\sigma(\mf b)}}=\sigma(T_{\la^\mf b})$.
\end{itemize}
\end{thm}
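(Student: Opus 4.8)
The plan is to reduce the theorem to the already-established pieces: Proposition~\ref{prop:Euler=KW}, Proposition~\ref{prop:KW:trivial}, Lemma~\ref{lem:GS}, and Proposition~\ref{prop:aux01}, and then propagate the resulting formula from $\mf b^{\texttt{odd}}$ to an arbitrary Borel. First I would fix $\la\in\mc{H}(n|m)$ with $L(\la^\natural)$ tame of atypicality $k\ge 1$ and take $\mf b=\mf b^{\texttt{odd}}$, $\mf l$, $\underline{\mf b}$ as in \eqref{eq:borel levi odd-1}, \eqref{eq:borel levi odd-2}, \eqref{eq:borel levi odd-3}. By Lemmas~\ref{lem:htwt:bodd-1} and \ref{lem:htwt:bodd-2}, the $\mf l$-module $L(\mf l,\la^{\mf b})$ is one-dimensional and restricts to the trivial module over the relevant $\osp(2k+1|2k)$, $\osp(2k|2k)$, $\osp(2k+2|2k)$, or $\mf{sl}(1|1)$. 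Proposition~\ref{prop:KW:trivial} (together with the elementary fact that Formula~\eqref{formula:kw} is insensitive to tensoring by a one-dimensional module, which shifts $\la^{\mf b}$ and $\rho^{\mf b}$ compatibly) shows that Formula~\eqref{formula:kw} holds for $L(\mf l,\la^{\mf b})$ with respect to $\underline{\mf b}$, with leading coefficient $j_{\la^{\mf b}}$ equal to $k!2^k$, $k!2^{k-1}$, $k!2^{k-1}$, or $1!\cdot 1$ in the four cases; note that in the $\osp(2m|2n)$ case with $\la_{n+1}<m$ the Levi is $\osp(2k|2k)$ when $e(\la)=0$ and $\osp(2k+2|2k)$ when $e(\la)=1$, which accounts for the exponent $e(\la)$ in $j_\la$.

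Granting this, Proposition~\ref{prop:Euler=KW} gives
\begin{align*}
{\rm ch}\,\mc E\bigl({\rm Ind}_{\mf p}^{\G} L(\mf l,\la^{\mf b})\bigr)=\frac{1}{j_{\la^{\mf b}}}D_{\mf b}^{-1}\sum_{w\in W}(-1)^{\ell(w)} w\!\left( \frac{e^{\la^\mf b+\rho^{\mf b}}}{\prod_{\beta\in T_{{\la}^\mf b}}(1+e^{-\beta})} \right),
\end{align*}
so it remains to identify the left-hand side with ${\rm ch}\,L(\la^\natural)$. This is exactly what Lemma~\ref{lem:GS} delivers: condition (i), admissibility of $\mf p$ for $\chi_{\la^{\mf b}}$, and condition (iii), strict positivity of $(\la^{\mf b}+\rho^{\mf b},\tfrac{2\beta}{(\beta,\beta)})$ on $\Phi^+_{\mf b}(\mf u_\even)$, are furnished by Proposition~\ref{prop:aux01}; condition (ii), that $\mf l$ contains a maximal mutually orthogonal set of isotropic odd simple roots orthogonal to $\la^{\mf b}+\rho^{\mf b}$ of cardinality $k$, is the distinguished set $T_{\la^{\mf b}}\subseteq\Pi_{\underline{\mf b}}\subseteq\Pi_{\mf b}$ exhibited in Lemma~\ref{lem:htwt:bodd-1}(3) / Lemma~\ref{lem:htwt:bodd-2}(3). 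Hence $\mc L_0({\rm Ind}_{\mf p}^{\G} L(\mf l,\la^{\mf b}))=L(\G,\la^\natural)$ and all higher $\mc L_i$ vanish, so ${\rm ch}\,\mc E(\cdots)={\rm ch}\,L(\la^\natural)$, proving the formula with respect to $\mf b^{\texttt{odd}}$.

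To finish part (1) I would pass to an arbitrary Borel $\mf b'$ with respect to which $L(\la^\natural)$ is tame. By Proposition~\ref{prop:reduction} it suffices to treat $\mf b'$ with $\mf b'_{\bar 0}=(\mf b^{\texttt{st}})_{\bar 0}$, and any two such are connected to $\mf b^{\texttt{odd}}$ by a chain of odd reflections through isotropic odd simple roots of the form $\pm(\delta_i-\epsilon_j)$ (and, in the $\osp(2m|2n)$ case, possibly roots involving $\ep_m$ as in the diagram~\eqref{ABC:diagram:D7}). The invariance of the right-hand side of \eqref{formula:kw} under a single such odd reflection is a direct computation using \eqref{eq:odd reflection rho}: when $(\la^{\mf b}+\rho^{\mf b},\alpha)\ne 0$ the shifted weight and the distinguished set are unchanged, and when $(\la^{\mf b}+\rho^{\mf b},\alpha)=0$ one uses Lemma~\ref{lem:T:set:Atype} to track how $T$ transforms while $e^{\la^{\mf b}+\rho^{\mf b}}/\prod_{\beta\in T}(1+e^{-\beta})$ is preserved (the $(1+e^{-\alpha})$ factor absorbs the shift by $\alpha$); since $D_{\mf b}$ also only changes by a sign matching $(-1)^{\ell(w)}$ reindexing, the whole expression is invariant, and $j_\la$ depends only on the central character data. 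For part (2), apply the outer automorphism $\sigma$: by Lemma~\ref{lem:sigma tame} and \eqref{eq:sigma twist}, $L(\la_-^\natural)\cong L(\la^\natural)^\sigma$ is tame with respect to $\sigma(\mf b)$ with distinguished set $\sigma(T_{\la^{\mf b}})$, and since ${\rm ch}\,V^\sigma=\sigma({\rm ch}\,V)$ while $\sigma$ fixes the bilinear form, maps $W$ to $W$, and sends $\rho^{\mf b}\mapsto\rho^{\sigma(\mf b)}$, applying $\sigma$ to the formula in (1) yields the stated formula for $L(\la_-^\natural)$ with the same $j_\la$. The main obstacle I anticipate is the odd-reflection invariance step: one must verify carefully, across the case subdivisions of Lemma~\ref{lem:T:set:Atype}, that the rational expression in the Weyl-symmetrized sum is genuinely unchanged, and that when the $\ep\delta$-sequence involves the sign on $\ep_m$ (the $\osp(2m|2n)$, $\la_{n+1}=m$ case with Borel~\eqref{ABC:diagram:D7}) the single distinguished root $\delta_i+\ep_m$ is correctly produced by the reflection chain~\eqref{seq:odd:ref}; everything else is bookkeeping built on results already in hand.
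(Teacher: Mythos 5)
Your proposal follows essentially the same route as the paper: establish the formula at $\mf b^{\texttt{odd}}$ by combining Lemmas \ref{lem:htwt:bodd-1}/\ref{lem:htwt:bodd-2}, Proposition \ref{prop:aux01}, Lemma \ref{lem:GS}, Propositions \ref{prop:KW:trivial} and \ref{prop:Euler=KW} (with Theorem \ref{thm:CHR2} for the $\mf{sl}(1|1)$ case), then transfer to $L(\la_-^\natural)$ via $\sigma$ and to arbitrary tame Borels. The one step you flag as delicate — invariance of the Weyl-symmetrized expression under the chain of odd reflections, including the $s(\mf b')=-1$ and $\la_{n+1}=m$ subcases handled via a $\sigma$-twist — is exactly where the paper does not compute termwise (your ``$(1+e^{-\alpha})$ absorbs the shift'' only covers the case $\alpha\in T$) but instead invokes the argument of \cite[Propositions 39 and 40]{CHR}, which is the identity you would need to verify.
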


\begin{proof}
Case (1): Let $\mf b$ and ${\mf l}$ be as in \eqref{eq:borel levi odd-1}, \eqref{eq:borel levi odd-2}, and \eqref{eq:borel levi odd-3}.

By Lemmas \ref{lem:htwt:bodd-1}(3) and \ref{lem:htwt:bodd-2}(3), $L(\la^\natural)$ is tame with respect to $\mf b$, and ${\mf l}$ contains $T_{\la^{\mf b}}$.  It follows from Proposition \ref{prop:aux01} that the parabolic subalgebra $\mf p$ with Levi subalgebra $\mf l$ is admissible for $\chi_{\la^\mf b}$, and $(\la^{\mf b}+\rho^{\mf b},\tfrac{\beta}{(\beta,\beta)})>0$ for $\beta\in\Phi^+_{\mf b}(\mf u_\even)$. Therefore, we have by  Lemma \ref{lem:GS}
\begin{equation}\label{eq:p typicality of tame}
{\rm ch}\mc E({\rm Ind}_{\mf p}^\G L(\mf l,\la^\mf b))={\rm ch}L(\G,\la^{\mf b}).
\end{equation}

On the other hand, since $L({\mf l},\la^{\mf b})$ is the trivial module by Lemmas \ref{lem:htwt:bodd-1}(2) and \ref{lem:htwt:bodd-2}(2), we have by Proposition \ref{prop:KW:trivial}
\begin{equation*}
{\rm ch}L({\mf l},\la^{\mf b})=\frac{1}{j_\la}D_{\underline{\mf b}}^{-1}
\sum_{w\in W_{\mf l}}(-1)^{\ell(w)} w\left(  \frac{e^{\la^{\mf b}+\rho^{\underline{\mf b}}}}{\prod_{\beta\in T_{\la^{\mf b}}}(1+e^{-\beta})} \right),
\end{equation*}
where $\underline{\mf b}={\mf b}\cap{\mf l}$, $T_{\la^{\mf b}}=T$ is as in Proposition \ref{prop:KW:trivial}, and $j_\la=k!2^k$ and $k!2^{k-1+e(\la)}$ in the case of \eqref{eq:borel levi odd-1} and \eqref{eq:borel levi odd-2}, respectively. In the case of \eqref{eq:borel levi odd-3}, we see by Theorem \ref{thm:CHR2} that ${\rm ch}L({\mf l},\la^{\mf b})$ also has the form \eqref{formula:kw}  with $T_{\la^{\mf b}}=\{\,\delta_i+\ep_m\,\}$ and $j_\la=1$.
Applying Proposition \ref{prop:Euler=KW} to \eqref{eq:p typicality of tame}, we obtain the formula
\begin{equation}\label{eq:KW formula for b_odd}
{\rm ch}L(\G,\la^{\mf b})=
\frac{1}{j_\la}
D_{\mf b}^{-1}\sum_{w\in W}(-1)^{\ell(w)} w\left(  \frac{e^{\la^\mf b+\rho^{\mf b}}}{\prod_{\beta\in T_{\la^\mf b}}(1+e^{-\beta})} \right),
\end{equation}
which proves (1) with respect to ${\mf b}$.

Assume that $\G=\osp(2m|2n)$. By Lemma \ref{lem:sigma tame} $L(\la^\natural_-)$ is also tame with respect to $\sigma(\mf b)$ with a distinguished set $T_{\la_-^{\sigma(\mf b)}}=\sigma(T_{\la^{\mf b}})$. Combining \eqref{eq:sigma twist} and \eqref{eq:KW formula for b_odd}, we compute
\begin{equation}\label{eq:KW formula under sigma}
\begin{split}
{\rm ch}L(\G,\la_-^{\sigma(\mf b)})&={\rm ch}L({\mf g},\la^{\mf b})^\sigma
=\sigma({\rm ch}L({\mf g},\la^{\mf b}))\\
&=\sigma\left(\frac{1}{j_\la}
D_{\mf b}^{-1}\sum_{w\in W}(-1)^{\ell(w)} w\left(  \frac{e^{\la^\mf b+\rho^{\mf b}}}{\prod_{\beta\in T_{\la^\mf b}}(1+e^{-\beta})} \right)\right)\\
&=\frac{1}{j_\la}
D_{\sigma(\mf b)}^{-1}\sum_{w\in W}(-1)^{\ell(w)} w\left(  \frac{e^{\la_-^{\sigma(\mf b)}+\rho^{\sigma(\mf b)}}}{\prod_{\beta\in \sigma(T_{\la^\mf b})}(1+e^{-\beta})} \right),
\end{split}
\end{equation}
which is precisely the Kac-Wakimoto formula for $L(\la^\natural_-)$ with respect to $\sigma({\mf b})$. In the last equality above we have used the fact that $W\sigma=\sigma W$, since the Weyl group of type $D$ is a subgroup of index two in the Weyl group of type $B$ of the same rank. This proves (2) for ${\mf b}$.


Case (2): Let ${\mf b'}$ be any Borel subalgebra of $\osp(\ell|2n)$ such that $L(\la^\natural)$ is tame with respect to ${\mf b'}$ with a distinguished set $T_{\la^{\mf b'}}$.

Suppose that $\ell=2m+1$ or $\ell=2m$ with $\la_{n+1}<m$.
From the proof of Theorems \ref{thm:classification B} and \ref{thm:classification D}(i),
we see that $T_{\la^{\mf b'}}$ consists of simple roots of the form $\pm(\delta_i-\epsilon_j)$ and $s({\mf b'})=1$ when $\ell=2m$, and hence the Borel subalgebra for $\gl(n|m)$ corresponding to $\mf b'$ contains $T_{\la^{\mf b'}}$. Now applying the exact same argument as in \cite[Propositions 39 and 40]{CHR}, we have
\begin{equation*}
\sum_{w\in W}(-1)^{\ell(w)} w\left(  \frac{e^{\la^\mf b+\rho^{\mf b}}}{\prod_{\beta\in T_{\la^\mf b}}(1+e^{-\beta})} \right)=
\sum_{w\in W}(-1)^{\ell(w)} w\left(  \frac{e^{\la^{\mf b'}+\rho^{\mf b'}}}{\prod_{\beta\in T_{\la^{\mf b'}}}(1+e^{-\beta})} \right),
\end{equation*}
where ${\mf b}$ is as in Case (1).
Since $D_{\mf b}=D_{\mf b'}$, the formula \eqref{eq:KW formula for b_odd} also holds for ${\mf b'}$.

Next, suppose that $\ell=2m$ and $\la_{n+1}=m$.
From the proof of Theorem \ref{thm:classification D}(ii), we see that $T_{\la^{\mf b'}}=\{\, \pm(\delta_j+\epsilon_m)\,\}$ for some $j$. We have $T_{\la_-^{\sigma(\mf b')}}=\sigma(T_{\la^{\mf b'}})=\{\, \pm(\delta_j-\epsilon_m)\,\}$, and $s(\sigma(\mf b'))=1$ when the $\epsilon\delta$-sequence for $\mf b'$ ends with $\delta$.
Note that $s(\sigma(\mf b'))=s(\sigma(\mf b))=1$, where ${\mf b}$ is as in Case (1), and both $\la_-^{\sigma(\mf b')}+\rho^{\sigma(\mf b')}$ and $\la_-^{\sigma(\mf b)}+\rho^{\sigma(\mf b)}$ can be viewed as a shifted highest weight for a finite-dimensional irreducible $\gl(n|m)$-module (not a polynomial module) with respect to the Borel subalgebras corresponding to $\sigma(\mf b')$ and $\sigma(\mf b)$, respectively.
So we have
\begin{align*}
\sigma({\rm ch}L({\mf g},\la^{\mf b'}))
&=\sigma({\rm ch}L({\mf g},\la^{\mf b}))\\
& = \frac{1}{j_\la}D_{\sigma(\mf b)}^{-1}
\sum_{w\in W}(-1)^{\ell(w)} w\left(  \frac{e^{\la_-^{\sigma(\mf b)}+\rho^{\sigma(\mf b)}}}{\prod_{\beta\in T_{\la_-^{\sigma(\mf b)}}}(1+e^{-\beta})} \right)\\
& = \frac{1}{j_\la}D_{\sigma(\mf b')}^{-1}
\sum_{w\in W}(-1)^{\ell(w)} w\left(  \frac{e^{\la_-^{\sigma(\mf b')}+\rho^{\sigma(\mf b')}}}{\prod_{\beta\in T_{\la_-^{\sigma(\mf b')}}}(1+e^{-\beta})} \right)\\
&=
\sigma\left(\frac{1}{j_\la}D_{\mf b'}^{-1}
\sum_{w\in W}(-1)^{\ell(w)} w\left(\frac{e^{\la^{\mf b'}+\rho^{\mf b'}}}{\prod_{\beta\in T_{\la^{\mf b'}}}(1+e^{-\beta})} \right)\right),\\
\end{align*}
where the second equality is from \eqref{eq:KW formula under sigma}, and the third equality is obtained by using the same arguments as in \cite[Propositions 39 and 40]{CHR} again.
Applying $\sigma$ on both sides, we obtain the formula for ${\rm ch}L({\mf g},\la^{\mf b'})$, which proves (1).
The proof of (2) is the same as in \eqref{eq:KW formula under sigma}.
\end{proof}

We conclude with an interpretation of the Kac-Wakimoto character formula as certain deformed Jacobi polynomials.  Let $e$ be an indeterminate as before and set $x_i=e^{\ep_i}$ and $y_j=e^{\delta_j}$, for $1\le i\le m$ and $1\le j\le n$. Let ${u}=\{\,{x_i+x_i^{-1}}\,|\,i=1,\ldots,m\,\}$ and ${v}=\{\,{y_j+y_j^{-1}}\,|\,j=1,\ldots,n\,\}$. Recall the super Jacobi polynomials $SJ_{\la}(u, v;k,p,q)$ of Sergeev and Veselov for $\la\in \mc{H}(m|n)$, which were introduced in \cite{SV1} in the study of deformed Calogero-Moser systems. In \cite{SV2}, specialized limit versions $SJ_{\la}(u, v;-1,-1, 0)$ and $SJ_{\la}(u, v;-1,0, 0)$ have been studied in connection with Euler characteristic for $\osp(\ell|2n)$. As a consequence of Theorem \ref{thm:main result}, we obtain the following interpretation of a special class of specialized super Jacobi polynomials as irreducible super characters of tame modules over the ortho-symplectic Lie superalgebras.

\begin{cor}\label{cor:super:Jacobi}
Let $\G=\osp(\ell|2n)$ with $\ell=2m$ or $\ell=2m+1$ and $\la\in\mc{H}(n|m)$ such that $L(\la^\natural)$ is tame.
\begin{itemize}
\item[(1)] Suppose that $\ell=2m+1$. Then the super character for the $\G$-module $L(\la^\natural)$ is given by the specialized super Jacobi polynomial $SJ_{\la'}(u, v;-1,-1, 0)$, up to a sign.

\item[(2)] Suppose that $\ell=2m$ and $\la^\natural=\la_-^\natural$. Then  the super character for the $\G$-module $L(\la^\natural)$ is given by the specialized super Jacobi polynomial $SJ_{\la'}(u, v;-1,0, 0)$, {up to a sign}.

\item[(3)] Suppose that $\ell=2m$ and $\la^\natural\not=\la_-^\natural$. Then  the super character for the direct sum of $\G$-modules $L(\la^\natural)\oplus L(\la_-^\natural)$ is given by the specialized super Jacobi polynomial $SJ_{\la'}(u, v;-1,0, 0)$, up to a sign.
\end{itemize}
\end{cor}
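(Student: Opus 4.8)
The plan is to identify the Kac-Wakimoto character formula obtained in Theorem \ref{thm:main result} with the specialized super Jacobi polynomials of Sergeev and Veselov, using the bridge provided by Euler characteristics. The key observation is that Theorem \ref{thm:main result} and Proposition \ref{prop:Euler=KW} together say that, for the Borel ${\mf b}={\mf b}^{\texttt{odd}}$ and the Levi ${\mf l}$ of \eqref{eq:borel levi odd-1}, \eqref{eq:borel levi odd-2}, \eqref{eq:borel levi odd-3}, we have ${\rm ch}\,L(\la^\natural)={\rm ch}\,\mc E\big({\rm Ind}_{\mf p}^{\G}L({\mf l},\la^{\mf b})\big)$, where $L({\mf l},\la^{\mf b})$ is the trivial ${\mf l}$-module (Lemmas \ref{lem:htwt:bodd-1}(2) and \ref{lem:htwt:bodd-2}(2)). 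On the other hand, in \cite{SV2} Sergeev and Veselov prove that the specialized super Jacobi polynomials $SJ_{\la'}(u,v;-1,-1,0)$ and $SJ_{\la'}(u,v;-1,0,0)$ are precisely (up to a normalization) the characters of the Euler characteristic $\mc E({\rm Ind}_{\mf p}^{\G}M)$ of the parabolic Verma module induced from the trivial ${\mf l}$-module, in the orthogonal-odd and orthogonal-even cases respectively. So the strategy is simply to match the two descriptions of the same Euler characteristic.

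First I would recall the precise statement from \cite{SV2}: the specialization $SJ_{\la'}(u,v;-1,-1,0)$ (resp. $SJ_{\la'}(u,v;-1,0,0)$) equals, up to an explicit sign, the super character of the Euler characteristic $\mc E({\rm Ind}_{\mf p}^{\G}M)$ for $\G=\osp(2m+1|2n)$ (resp. $\osp(2m|2n)$), where $M$ is the trivial module over the Levi $\mf l = \gl(k|k)\times(\text{abelian part})$ — this being exactly the same $\mf p$ and $\mf l$ appearing in \eqref{eq:borel levi odd-1} and \eqref{eq:borel levi odd-2}. Here one must be careful about the substitution $x_i=e^{\ep_i}$, $y_j=e^{\delta_j}$ and about the fact that the ``super'' character involves the sign twist on odd weight spaces, which is what converts the Euler characteristic expression of Proposition \ref{form:Euler} into the super Jacobi polynomial; the sign ambiguity in the statement of the corollary absorbs the overall sign $(-1)^{|\Phi^+_{\mf b,\odd}|}$ or similar that arises from this twist. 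Second, invoking Theorem \ref{thm:main result} (via \eqref{eq:p typicality of tame}, i.e. Lemma \ref{lem:GS} applied through Proposition \ref{prop:aux01}), the Euler characteristic coincides with ${\rm ch}\,L(\la^\natural)$ when $\la_{n+1}<m$, giving parts (1) and (2). For part (3), when $\la^\natural\ne\la_-^\natural$ (so $\la_{n+1}=m$), the Levi $\mf l$ used in \eqref{eq:borel levi odd-2} is the one for which the degree of atypicality is seen inside $\osp(2k+2e(\la)|2k)$; the relevant Euler characteristic over $\osp(2m|2n)$ with the trivial-on-Levi module now decomposes as ${\rm ch}\,L(\la^\natural)+{\rm ch}\,L(\la_-^\natural)$ rather than a single irreducible, because the $\sigma$-twist \eqref{eq:sigma twist} identifies $L(\la_-^\natural)\cong L(\la^\natural)^\sigma$ and the Euler characteristic corresponding to the $\delta\epsilon$-sequence used by Sergeev-Veselov is $\sigma$-invariant. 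One checks directly from the character formula in Proposition \ref{form:Euler} for the relevant $\mf p$ that the two contributions add.

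The main obstacle I anticipate is the bookkeeping needed to align conventions: matching the Sergeev-Veselov normalization of $SJ_{\la'}$ (including which partition, $\la$ or its transpose $\la'$, and which sign) with the normalization $\frac{1}{j_\la}$ appearing in Theorem \ref{thm:main result}, and verifying that the specialization parameters $(-1,-1,0)$ versus $(-1,0,0)$ correspond correctly to $\osp$ of odd versus even orthogonal type. This requires careful tracking of the substitution $x_i+x_i^{-1}$, $y_j+y_j^{-1}$ and of the half-integer shifts in $\rho^{\mf b^{\texttt{odd}}}$ from \eqref{eq:rhost} and Lemmas \ref{lem:htwt:bodd-1}, \ref{lem:htwt:bodd-2}. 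Once the dictionary between \cite[Propositions 3.1 and 8.1]{SV2} and our Euler characteristic is fixed, the corollary follows immediately from Theorem \ref{thm:main result} and \eqref{eq:sigma twist}; no further representation-theoretic input is needed. In particular, because the corollary is stated only ``up to a sign,'' the most delicate part — pinning down the exact global sign — is deliberately not required, which substantially eases the argument.
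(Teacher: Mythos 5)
Your proposal matches the paper's proof in essence: both identify the specialized super Jacobi polynomials with the super characters of the Euler characteristics $\mc E({\rm Ind}_{\mf p}^{\G}L(\mf l,\la^{\mf b}))$ via the results of Sergeev--Veselov, and then use the tameness of $L(\la^\natural)$ (through Theorem \ref{thm:main result}, i.e.\ Lemma \ref{lem:GS} and Proposition \ref{prop:aux01}) to replace the Euler characteristic by the irreducible character, with case (3) handled by the $\sigma$-symmetric form of the Sergeev--Veselov formula. The only discrepancy is bibliographic: the relevant statements are \cite[Theorems 7.2 and 8.7, Remark 8.8, formulas (41), (53), (54)]{SV2} rather than the propositions you cite, but this does not affect the argument.
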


\begin{proof} Consider the cases of $\la$ as in (1) and (2). According to \cite[Theorems 7.2 and 8.7, and Remark 8.8]{SV2}, the specialized Jacobi polynomials $SJ_{\la'}(u, v;-1,-1, 0)$ and $SJ_{\la'}(u, v;-1,0, 0)$ in these cases are equal, up to  a sign, to the super characters of the virtual modules $\mc E({\rm Ind}_{\mf{p}}^\G L(\mf l,\la^{\mf b}))$ with respect to the Borel subalgebras $\mf b$ in Theorem \ref{thm:main result} (see  the formulas (41) and (53) in loc.~cit.~for explicit forms). Since $L(\la^\natural)$ is tame, we have $\mc E({\rm Ind}_{\mf{p}}^\G L(\mf l,\la^{\mf b}))=L(\G,\la^{\mf b})$ by Theorem \ref{thm:main result}, and hence (1) and (2) follow.

The proof of (3) is also similar using the formula (54) in \cite[Theorem 8.7]{SV2}.
\end{proof}

\bigskip
\frenchspacing

\end{document}